\newcommand{\sn}{^{(n)}}
\newcommand{\tr}{\mathrm{tr}}
\def \sur#1#2{\mathrel{\mathop{\kern 0pt#1}\limits^{#2}}}
\newcommand{\UNIF}{\mathrm{UNIF}}
\newcommand{\GW}{\mathrm{GW}}
\newcommand{\coeff}{\mathrm{co}}
\newcommand{\meas}{\mathrm{sp}}
\newcommand{\g}{\mathrm{g}}
\newcommand{\dd}{\mathrm{d}}
\newcommand{\ddd}{\mathrm{d}}
\newcommand{\w}{\mathrm{w}}
\newcommand{\V}{\mathcal V}
\newcommand{\NomBar}{\ensuremath{\mid}}
\newtheorem{theorem}{Theorem}[section]
\newtheorem{corollary}[theorem]{Corollary}
\newtheorem{lemma}[theorem]{Lemma}
\newtheorem{proposition}[theorem]{Proposition}
\newtheorem{remark}[theorem]{Remark}
\makeatletter\@addtoreset{equation}{section}\makeatother
\newcommand{\ea}{\end{array}}
\newcommand{\beqohne}{\begin{eqnarray*}}
\newcommand{\eeqohne}{\end{eqnarray*}}
\newcommand{\beohne}{\begin{equation*}}
\newcommand{\eeohne}{\end{equation*}}
\newcommand{\R}{\mathbb{R}}
\newcommand{\N}{\mathbb{N}}
\newcommand{\T}{\mathbb{T}}
\newcommand{\C}{\mathbb{C}}
\newcommand{\D}{\mathbb{D}}
\newcommand{\PP}{\mathbb{P}}
\def\proof{\noindent{\bf Proof:}\hskip10pt}
\def\QED{\hfill\vrule height 1.5ex width 1.4ex depth -.1ex \vskip20pt}
\def \sur#1#2{\mathrel{\mathop{\kern 0pt#1}\limits^{#2}}}
\def \w{{\tt w}}
\def \ben{\begin{eqnarray}}
\def \een{\end{eqnarray}}
\newcommand{\Sr}{\mathcal{S}}
\newcommand{\fab}{\color{black}}
\DeclareMathOperator{\GE}{HP}
\definecolor{Red}{rgb}{1,0,0}
\definecolor{Blue}{rgb}{0,0,1}
\newcommand{\supp}{\operatorname{supp}}
\begin{document}

\title{Sum rules via large deviations: polynomial potentials and multi-cut regime on the unit circle}
\author{Fabrice Gamboa\thanks{Universit\'e de Toulouse, Institut de Math\'ematiques de Toulouse,  31062-Toulouse Cedex 9, France and ANITI,{\tt fabrice.gamboa@@math.univ-toulouse.fr}} \and Jan Nagel\thanks{Technische Universit\"at Dortmund, Fakult\"at f\"ur Mathematik, 44227 Dortmund, Germany,{\tt jan.nagel@tu-dortmund.de}} \and Alain Rouault\thanks{Laboratoire de Math\'ematiques de Versailles, UVSQ, CNRS, Universit\'e Paris-Saclay, 78035-Versailles Cedex France,{\tt alain.rouault@uvsq.fr}}}
%

\maketitle
\begin{abstract}
Sum rules are elegant formulas that relate entropy functionals to coefficients associated with orthogonal polynomials \cite{simonszego}.
In a series of paper (see for example \cite{gamboanag2016}, \cite{gamboanag2017}, \cite{breuersizei2017}, \cite{breuersizei2018}), interesting connections have been established between the large theory of spectral measures built on random matrices and sum rules.
In this work, we extend this approach by studying sum rules within random matrix models with polynomial potentials on the unit circle, with a particular focus on cases where the equilibrium measure lacks full support.
\end{abstract}

{\bf Keywords:} Sum rule, large deviations, random matrices, spectral measure, Verblunsky coefficients

{\bf MSC 2020:} 60F10, 60B20, 42C05, 47B15.
\tableofcontents

\section{Introduction}
\label{sec:introduction}

This paper deals with sum rules arising from spectral theory and orthogonal polynomials on the unit circle (OPUC).  
The first historical example of such a sum rule is the classical Szeg{\H o}-Verblunsky identity, 
\begin{equation}
\label{SVsumrule}
\frac{1}{2\pi}\int_{0}^{2\pi}\log g_{\mu}(z)\ddd z = \sum_{k= 0}^\infty \log(1-|\alpha_k|^2)\,,
\end{equation}
where $\mu$ is a {\fab probability }measure on the unit circle $\mathbb T = \{z = e^{i\theta}: \theta \in [0,2\pi)\}$ with Lebesgue decomposition 
\[\ddd\mu(z)= g_{\mu}(z) \tfrac{\ddd z}{2\pi}+\ddd\mu_s(z)\]
and $(\alpha_k)_{k\geq 0}$\nomenclature{$(\alpha_k)$}{Verblunsky coefficients (V-coefficients)} are the coefficients appearing in the recursion of {\fab orthogonal} polynomials  with respect to $\mu$, also called {\fab the}  Verblunsky coefficients. In the sequel we will say V-coeffcients, for short. Both sides of \eqref{SVsumrule} vanish if, and only if, $\mu$ is the uniform measure on the circle ({\fab this is the reference measure here}). 
Furthermore, both sides are finite or infinite simultaneously. 
 Changing the signs in both sides of this equation leads to the formulation
\begin{equation}
\label{SVsum}
\mathcal K(\operatorname{UNIF} | {\fab\mu}) = -\sum_{k \geq 0} \log(1 - |\alpha_k|^2)
\end{equation}
where $\operatorname{UNIF}$ is the normalized Lebesgue measure on  $\mathbb T$ and for probability measures $\nu$ and $\mu$, $\mathcal K (\nu | \mu)$ denotes the Kullback-Leibler divergence or relative entropy of $\nu$ with respect to $\mu$ \begin{equation}
\label{kukul}
\mathcal{K}(\nu|\mu) = \int \log \left( \frac{\ddd \nu}{\ddd\mu} \right) \ddd\nu 
\end{equation}
if $\nu$ is absolutely continuous with respect to $\mu$ and $\log\frac{\ddd \nu}{\ddd\mu}\in L^1(\nu)$, and   $\mathcal{K}(\nu|\mu)=\infty$
\nomenclature{$\mathcal{K}(\cdot \mid\cdot)$}{Kullback-Leibler divergence}
\nomenclature{$\operatorname{UNIF}$}{Normalized Lebesgue measure} 
otherwise. 

In \cite{simonopuc1} B. Simon gave (at least) four 
analytical proofs of (\ref{SVsumrule}). An important consequence of a sum rule is the existence of equivalent conditions for the finiteness of either side; in the words of Simon, these are the \emph{gems} of spectral theory. The gem implied by \eqref{SVsum} is
\begin{align}
\label{SVgem}
\mathcal K(\operatorname{UNIF} | \mu) < \infty \quad \Leftrightarrow\quad \sum_{k=0}^\infty |\alpha_k|^2<\infty. 
\end{align}
Since the classical work of Szeg\H{o} and Verblunsky, there has been a great interest in extending such a sum rule and gem, and to replace the uniform law $\UNIF$ by a more general reference measure, for example in 
\cite{simonopuc1,simon2005higher,golinskiizlat2007,lukic2016higher,yan2018,du2023}.
A common feature of these generalizations is that the reference measure has a density which is polynomial in $\cos (\theta)$ and is supported by the full circle $\mathbb T$.

{\fab In \cite{gamboanag2016}, we developed a probabilistic method for establishing sum rules for probability measures supported on the real line. This approach builds on the seminal work \cite{gamboarou2010}, and was later extended to probability measures on the unit circle $\mathbb{T}$ in \cite{gamboanag2017}. We also refer the reader to \cite{breuersizei2017} for an introductory exposition of the method. 
The core of the method relies on a large deviation principle for a sequence of random measures associated with random matrices. The measure $\mu$ is interpreted as the realization of a (random) spectral measure corresponding to a pair $(M, e)$, where $M$ is a random unitary operator and $e$ is a fixed cyclic vector in a Hilbert space $\mathcal{H}$.}
If  $\dim\mathcal H = n \geq 1$,
 then $\mu$ is a discrete probability measure on the unit circle $\T$ which can be written as 
\begin{equation}
\label{meumeu}
\mu_n = \sum_{k=1}^n \w_k \delta_{e^{i\theta_k}}\,,
\end{equation}
where $e^{i\theta_1},\dots ,e^{i\theta_n}$ are the eigenvalues of $M$ and $\w_k=|\langle e,u_k\rangle |^2$ with $u_1,\dots ,u_n$ a corresponding set of orthonormal eigenvectors. 
A classical assumption is the invariance  of the law of $M$ under any unitary conjugations. Under this assumption,  the joint density of $(e^{i\theta_1}, \dots, e^{i\theta_n})$ is  proportional to
\begin{align} \label{invariantevdensity}
|\Delta(e^{i\theta_1}, \dots, e^{i\theta_n})|^2 \exp \left( -n \sum_{k=1}^n \mathcal V(e^{i\theta_k})\right)
\end{align}
where $\Delta$  is the Vandermonde determinant and $\mathcal V$ {\fab is} some potential. \nomenclature{$\Delta(e^{i\theta_1}, \dots, e^{i\theta_n})$}{Vandermonde determinant}
\nomenclature{$\mathcal V$}{Potential}
\nomenclature{$\mu_{\mathcal V}$}{Equilibrium measure associated to $\mathcal V$}
 {\fab In this frame},  the distribution of the weights $(\w_1, \dots, \w_n)$ is uniform on the simplex. With convenient assumptions on $\mathcal V$, the sequence $(\mu_n)_n$ converges to a deterministic measure $\mu_\V$ on $\T$, called the equilibrium measure.  Actually  $(\mu_n)_n$  satisfies, as $n$ grows, a large deviation principle (LDP). Encoding the spectral measure by its support points and weights, it is possible to show an LDP with rate function $\mathcal{I}_{\meas}$. If it also possible to study the large deviations from the distribution of random V-coefficients of $\mu_n$, we obtain a rate function $\mathcal I_{\coeff}$. {\fab Since the rate function is unique, combining both approaches yields the identity $\mathcal{I}_{\meas} = \mathcal{I}_{\coeff}$, which constitutes a potential sum rule.}
%

This scheme is the most simple in the Circular Unitary Ensemble, for which $\V = 0$, the equilibrium measure is the uniform law $\mu_\V = \UNIF$  and the $\alpha_k$ are independent. In this case $\mathcal I_{\meas}$ and $\mathcal I_{\coeff}$ is the left and right hand side, respectively, of \eqref{SVsum}, which leads to a probabilistic proof of the Szeg\H{o}-Verblunsky sum rule, see \cite{gamboanag2017} or \cite{breuersizei2017} for a spectral theoretic exposition. {\fab For other potentials, although the large deviation principle on the measure side is relatively straightforward to establish, the $V$-coefficients become dependent, making their large deviation analysis significantly more challenging. The Hua--Pickrell model, corresponding to the potential $\V(z) = -\mathrm{d} \log |1 - z|$, is also studied in \cite{gamboanag2017}. In that work, we use deformed $V$-coefficients, which are independent.
}

 
Using this probabilistic approach, Breuer, Simon and Zeitouni \cite{breuersizei2018} considered  potentials $\V$ which are symmetric Laurent polynomials in $z= e^{i\theta}$ of degree $d$. They 
prove the LDP for the coefficient side when the measure is fully supported by $\T$ (ungapped case).  The V-coefficients are no longer independent, instead, their dependence structure exhibits finite-range (finite-memory) correlations.
As particular examples, they considered   
ensembles whose equilibrium measures  have a density $\rho^{\alpha,\beta}$ proportional to 
\[(1- \cos \theta)^\alpha (1 + \cos \theta)^\beta\] for $ (\alpha, \beta) = (1,0), (1,1)$ and $(2,0)$, respectively. 
\nomenclature{$\rho^{\alpha,\beta}$}{Probability density proportional to $(1- \cos \theta)^\alpha (1 + \cos \theta)^\beta$ }

It is common in the study of particle systems and in random matrix theory to multiply the potential $\V$ in \eqref{invariantevdensity} by a parameter $\g$, which is interpreted as the inverse temperature and measures the strength of the coupling. For $|\g|$ small, the equilibrium measure is supported by $\mathbb T$, whereas for large $|\g|$ the equilibrium measure typically is supported by a proper subset of the circle. Starting from the potential
\[\V (z) = \frac{1 }{2}(z + z^{-1})\] whose equilibrium density is supported by $\T$ and is given by 
\[\rho^{1,0}(e^{i\theta}) = \frac{1}{2\pi} (1- \cos \theta)\,,\]
many authors considered the potential $\g \V^{1,0}$ and the corresponding density $\rho^{1,0}_\g$.
For $|\g| \leq 1$ (strong coupling) the density is supported by $\T$:
\begin{align}
\label{defrhog}
\rho_\g^{1,0}(e^{i\theta}) =\frac{1}{2\pi} (1- \g \cos \theta)\,.
\end{align}
This model is commonly known as Gross-Witten model. 

For  $|\g| > 1$ (weak coupling) the support of  the equilibrium measure is a proper arc of the unit circle (gapped phase). 
This model  is important in the analysis of problems involving random permutations \cite{baikdeiJohan1999}. 
For details and applications  we refer to \cite[p. 203]{HiaiP}, \cite{grosswi1980}, \cite{wadia2012}.


{\fab The purpose of this paper is to extend the probabilistic method in order to establish sum rules and identify gems for reference measures whose support is a proper subset of $\mathbb{T}$.  
We focus on general polynomial potentials whose equilibrium measures are supported on one or several proper arcs of the unit circle---commonly referred to as the one-cut and multi-cut cases.}
%
%
{\fab Regarding large deviations, a major distinction from the earlier situation comes from the presence of outliers—eigenvalues outside the support of the equilibrium measure $\mu_{\V}$. These outliers introduce an additional contribution on the measure side, governed by the effective potential.  
The large deviation principle can be derived by transferring a recent result from the real line to the circle, as done in \cite{gamboanag2021}.}
%
{\fab On the coefficient side, it is necessary to revisit the proof by Breuer, Simon, and Zeitouni presented in \cite{breuersizei2018}. Careful bounds on the coefficient density then provide estimates for the corresponding rate function.}

The main results of this article are a general gem for the multi-cut case (Theorem \ref{Unitabstractgem}) and a sum rule in the one-cut case (Theorem \ref{Unitabstractnewsumrule}). The sum rule comes with the caveat that it requires a specific convergence of the V-coefficients. This is not an artifact of our proof and we argue in Remark \ref{rem:counterexample}, that a general sum rule with a polynomial right hand side cannot hold for any measure $\mu$ on $\T$. For symmetric one-cut measures we 
show in Theorem \ref{Unitnewsumrulesymmetric} that the sum rule always holds for a specific representative in the Aleksandrov-class of $\mu$ (see \cite{simonopuc1} for the Aleksandrov-class definition).  

As an application, we consider the three models  $(\alpha,\beta)=(1,0), (1,1)$ and $(\alpha,\beta)=(2,0)$ parametrized by $\g$  for which we have the same phenomenon with ungapped/gapped phases according to the value of $\g$.  
We state generalized sum rules and gems in the ungapped phases and in the gapped phase we find a two-cut regime for  (1,1) and a one-cut regime for (2,0). For the Gross-Witten model we obtain a precise gem.

The paper,  which can be seen as a companion paper of \cite{gamboarou2010}, \cite{gamboanag2017} and \cite{gamboanag2021}, is organized as follows. In the next section we give some notations, assumptions and some connections with the work of Breuer-Simon-Zeitouni \cite{breuersizei2018} and Du \cite{du2023}.  In Section \ref{sec:BSZworkshop}, we give the previous results of Breuer, Simon and Zeitouni for the ungapped case. Section \ref{sec:our_results} contains our main results: the abstract gem in the general case and a sum rule in the one-cut case.  In Section \ref{sec:one-par-ungapped}, we examine the ungapped phases of our three models parametrized by $\gamma$. Section \ref{sec:one-par-gapped} extends this analysis to the gapped phase.
In Section \ref{sec:LDP}, we state the probabilistic large deviation results behind the gem and sum rule and explain how they imply the sum rules.  
Section \ref{sec:proofs} is devoted to the proofs of the large deviation results.
The appendix concludes the paper by providing explicit computations for specific potentials, further details on the derivation of the sum rules in Section \ref{sec:one-par-ungapped}, and additional auxiliary entropy calculations.

\section{Notations and definitions}
\label{sec:sumrules}

Let us denote by $\mathcal M_1(\T)$ the set of all probability measures on $\T$. We consider {\fab orthogonal polynomials  for some} measure $\mu\in \mathcal M_1(\T)$ and their recursion. If
$\mu$ is supported by an infinite set,   
we construct the monic orthogonal polynomials $\Phi_k, k\geq 0$ by the Gram-Schmidt procedure. The recursion of these polynomials involves the reversed polynomials 
\begin{equation*}
\label{reversed}
    \Phi_k^\star (z) = z^k \overline{\Phi_k(1/\bar z)} .
\end{equation*}
Notice that $\Phi_k^\star$ is the unique polynomial of degree at most $k$,
orthogonal to $z, z^2, \cdots, z^k$ and such that $\Phi_k^\star (0)=1$ {\fab (see Lemma 1.5.1. in \cite{simonopuc1})}.
A sequence of normalized polynomials is obtained by setting $\varphi_0=1$ and for $k\geq 1$
\begin{equation}
   \label{orthonc}
\varphi_k = \frac{\Phi_k}{\Vert\Phi_k\Vert} .
\end{equation}
The associated reversed polynomials are
\begin{equation}
 \label{orthoncs}
   \varphi_n^\star = \frac{\Phi_n^\star}{\Vert\Phi_n\Vert}\,.
\end{equation}
The \emph{Szeg\H{o} recursion} of the monic polynomials is then the relation
\begin{equation}
\label{recOPUC}
\Phi_{k+1}(z)=z\Phi_{k}(z)-\overline{\alpha}_k\Phi_{k}^*(z),
\end{equation}
with a recursion coefficient 
\[\alpha_k=-\overline{\Phi_{k+1}(0)}\]
 the so-called Verblunsky coefficient or $V$-coefficient, for short. 

From \eqref{recOPUC}, we obtain the recursion
\begin{align} \label{OPUCnorm}
||\Phi_{k}||^2_{L^2(\mu)} = (1-|\alpha_{k-1}|^2) ||\Phi_{k-1}||^2_{L^2(\mu)} = \prod_{i=0}^{k-1} (1- |\alpha_i|^2),  
\end{align}
showing that $\alpha_k \in \mathbb D= \{z\in \mathbb{C}: |z|<1\}$ for all $k\geq 0$. For later reference, we set 
\begin{align} \label{defrho}
\rho_k = \sqrt{1-|\alpha_k|^2} .
\end{align} 
If $\mu\in \mathcal{M}_1(\mathbb{T})$ is supported by $n$ points,
\[\mu= \sum_{k=1}^n \w_k\delta_{\lambda_k}\]
 we can only define monic polynomials up to degree $n$, which by \eqref{recOPUC} defines V-coefficients $\alpha_0,\dots ,\alpha_{n-1}$. In this case, formula \eqref{OPUCnorm} is still valid up to $k=n$. 
From \eqref{OPUCnorm} we see that $\alpha_k \in \mathbb{D}$ for $1\leq k \leq n-2$, but $\alpha_{n-1}\in \mathbb{T}$.  {\fab Before ending this section notice that we adopt the convention $\alpha_{-1} = -1$}

\subsection{Spectral measure and matrix representations}
According to  the spectral theorem,  
if $\mathcal H$ is an Hilbert space and if $U$ is a bounded unitary operator on $\mathcal H$, there exists a unique operator-valued measure $\mathcal E$ on $\T$
such that
\begin{align}
\label{spthcomplex}
U = \int_\T \lambda\, \ddd\mathcal E(\lambda).
\end{align}
A vector $e\in \mathcal H$ is called cyclic if  span $(e, Ue, \cdots) = \mathcal H$. In this case the spectral measure of the pair $(U,e)$ is the measure $\mu= \langle e, \mathcal E(\cdot) e\rangle$. It is  uniquely determined by its moments
\begin{align}
\label{defspectralmeasurecomplex}
\int_\mathbb T z^k \ddd \mu(z) = \langle e, U^k e\rangle ,\qquad k\in \mathbb{Z}.
\end{align}
If dim $\mathcal H =n$ and if $e$ is cyclic for $U$, we let $\lambda_1, \ldots, \lambda_n$ denote the eigenvalues of $U$ (generically distinct), belonging to $\T$,  and $\psi_1, \ldots, \psi_n$ a system of associated orthonormal eigenvectors. Then, the spectral measure of the pair $(U,e)$ is 
\begin{align}\label{spectralmeasure}
\mu_n =  \sum_{k=1}^n \w_k\delta_{\lambda_k}\ , \ \w_k= |\langle \psi_k, e\rangle|^2 \ (k=1, \dots, n)\,.
\end{align}
It is a weighted version of the empirical eigenvalue measure
\begin{align*}
\hat\mu_n =  \frac{1}{n}\sum_{k=1}^n \delta_{\lambda_k} .
\end{align*}

There are two important matrix representations for $\mu\in \mathcal{M}_1(\mathbb{T})$, the GGT matrix and the CMV matrix. 

The GGT matrix, named after Geronimus, Gragg, and Teplyaev \cite[Sec. 4.1]{simonopuc1}, is the matrix of the mapping $f \mapsto \mathrm{Shift} f$, where $(\mathrm{Shift} f)(z) = z f(z)$ on $L^2(\mu)$, equipped 
 with  the basis $(\varphi_k)_{k\geq 0}$ defined in \eqref{orthonc}. It is  
\begin{align}
\label{GGTm}
\mathcal G_\mu  = 
\begin{pmatrix} \bar\alpha_0 &\rho_0\bar\alpha_1 &\rho_0 \rho_1 \bar\alpha_2 & \rho_0\rho_1 \rho_2 \bar\alpha_3 & \cdots\\
\rho_0&-\alpha_0\bar\alpha_1 & -\alpha_0\rho_1 \bar\alpha_2 &- \alpha_0\rho_1 \rho_2 \bar\alpha_3 & \cdots\\
0 & \rho_1 &  - \alpha_1\bar\alpha_2 &- \alpha_1\rho_2 \bar\alpha_3 & \cdots\\
0&0 &\rho_2 & - \alpha_2\bar\alpha_3  & \cdots\\
\vdots&\vdots&\vdots& \vdots&\ddots
\end{pmatrix}\,.
\end{align}
When $\mu$ has an infinite support, $\mathcal G_\mu$ is an infinite matrix and when 
 $\mu$ is supported by $n$ points $\mathcal G_\mu$ is an $n\times n$ matrix. In this latter case, the last line is
\[0\mid \dots \mid 0 \mid \rho_{n-2} \mid -\alpha_{n-2}\bar\alpha_{n-1}\]
and  last column 
\[\left(\rho_0\rho_1 \dots\rho_{n-2}\bar\alpha_{n-1} \mid -\alpha_0\rho_1\dots\rho_{n-2}\bar\alpha_{n-1}\mid \cdots\mid -\alpha_{n-2}\bar\alpha_{n-1}\right)^T\]
If a (finite or infinite) sequence $\alpha$ is given, we denote by $\mathcal G(\alpha)$ the associated GGT matrix.  We denote by $\mathcal G_L(\alpha)$ the upper left $L\times L$ section of $\fab\mathcal G_{\mu}$, $\fab(L\in\N_*)$.
\nomenclature{$\mathcal G_L(\alpha)$}{Upper left $L\times L$ section of $\mathcal G_{\mu}$}
The five-diagonal or CMV representation is due to Cantero, Moral, and Vasquez \cite{canteromoral2003}, when using the new basis $(\chi_k)_{k \geq 0}$, with 
\begin{align}
\label{defCMVbasis}
\chi_{2k}(z) = z^{-k}\varphi_{2k}^{\star}(z) ,\qquad 
\chi_{2k-1}(z) = z^{-k+1}\varphi_{2k-1}(z).  
\end{align}
It is obtained by orthonormalizing $1, z , z^{-1}, z^2, z^{-2}, \dots$. {\fab In this new Hilbertian basis, the $\mathrm{Shift}$ mapping}  on $L^2(\mu)$ is represented by the matrix 
\begin{align}
\label{favardfiniC}
\mathcal C_\mu = \begin{pmatrix} \bar \alpha_0&\bar \alpha_1\rho_0 &\rho_1\rho_0&0&0&\cdots\\
\rho_0& -\bar\alpha_1\alpha_0&-\rho_1\alpha_0 &0 &0 &\cdots\\
0&\bar\alpha_2\rho_1&-\bar\alpha_2\alpha_1&\bar\alpha_3\rho_2&\rho_3\rho_2&\cdots\\
0&\rho_2\rho_1& -\rho_2\alpha_1& -\bar\alpha_3\alpha_2&-\rho_3\alpha_2&\cdots\\
0&0&0&\bar\alpha_4\rho_3&-\bar\alpha_4\alpha_3& \cdots\\
\vdots&\vdots&\vdots&\vdots&\vdots&\ddots
\end{pmatrix} .
\end{align}
When $\mu$ has infinite support, $\mathcal{C}_\mu$ is an infinite matrix and when $\mu$ is supported by  $n$ points, $\mathcal{C}_\mu$ is $n\times n$ with last line 
\begin{eqnarray*}
\begin{cases}
0\mid \cdots\mid 0 \mid 0 \mid \bar\alpha_{2r}\rho_{2r-1} \mid  - \bar\alpha_{2r}\alpha_{2r-1} \ \ &\hbox{if} \ n=2r+1,\\
0\mid \cdots\mid  0\mid  \rho_{2r}\rho_{2r-1}\mid  - \rho_{2r}\alpha_{2r-1}\mid  -\bar\alpha_{2r+1}\alpha_{2r}\ \ &\hbox{if} \ n=2r+2 , \ r\geq 0. 
\end{cases}
\end{eqnarray*}
The measure $\mu$ is then the spectral measure of $(\mathcal{G}_\mu,e_1)$ and of $(\mathcal{C}_\mu,e_1)$, meaning 
\begin{align} \label{spectralmeasureGCmatrices}
\int_\mathbb T z^k \ddd \mu(z) = \langle e_1,\mathcal{G}^k_\mu e_1\rangle = \langle e_1,\mathcal{C}^k_\mu e_1\rangle . 
\end{align}

\medskip

As a consequence, we have a one-to-one relation between a measure $\mu$ and its sequence of recursion coefficients. To be more precise, let $\mathcal R_\infty = \D^{\mathbb{N}_0}$ be the set of all V-coefficient sequences of measures with infinite support. For $n\in \mathbb{N}$, let   
\begin{align*}
\mathcal R_n = \D^{n-1}\times \T 
\end{align*}
be the set of V-coefficients of measures with $n$ support points, and set
\begin{align*}
\mathcal R = \mathcal R_\infty \cup \left(\bigcup_{n=1}^\infty \mathcal R_n\right) .
\end{align*}
For $\alpha \in \mathcal R$, let $N(\alpha) = \inf\{n\geq 1: \alpha_n \in \T\}$, such that $\alpha \in \mathcal R_{N(\alpha)}$, with $N(\alpha)\in \mathbb{N}\cup \{\infty\}$. 
 
On $\mathcal R$, we introduce the following metrizable topology, following \cite{breuersizei2017}. A sequence $(\alpha^{(n)})_{n\geq 1}$ converges in $\mathcal R$ to $\alpha$, if $\alpha_k^{(n)}\to \alpha_k$ for all $k<N(\alpha)+1$. Note that in general $N(\alpha^{(n)})\not\to N(\alpha)$, but $N(\alpha)$ is uniquely determined by the sequence $(\alpha^{(n)})_{n\geq 1}$. When $\mathcal M_1(\T)$ is equipped with the weak topology, the mapping 
\begin{align}\label{defphimapping}
\psi: \mathcal M_1(\T) \longrightarrow \mathcal R, 
\end{align}
{\fab that maps} a measure to its V-coefficients, is then a homeomorphism. 

\medskip

We will frequently work with a selection of coefficients and introduce the following notation. For $p,q,n \in \mathbb N_0, p<q, n \geq 1$, let
\[ [p,q] = \{p, p+1, \cdots, q\} ,\quad  [n] = [0, n]\,.\]

\nomenclature{$[p,q]$}{$\{p, p+1, \cdots, q\}$}
\nomenclature{$[n]$}{$[0, n]$}
For a vector $x=(x_k)_{k\geq 0}$ we then define
\begin{align*}
x_{[p,q]} = (x_p,\dots ,x_q), \quad x_{[n]} = (x_0,\dots x_n) .
\end{align*} 
\nomenclature{$x_{[p,q]}$}{$(x_p,\dots ,x_q)$}
\nomenclature{$x_{[n]}$}{$(x_0,\dots x_n)$}
The projection onto the first $n$ coordinates is denoted by $\pi_n$, such that $\pi_n(x) = x_{[n]}$.


\subsection{Randomization}
\label{sec:randomization}

On the unitary group $\mathbb U(n)$, we denote by $\mathbb P\sn$  the normalized Haar measure, also called the Circular Unitary Ensemble (CUE). It is classical that under $\mathbb P\sn$ the array of eigenvalues of $U\in \mathbb U(n)$ has a density with respect to the Lebesgue measure $\dd\zeta_1 \dots \dd\zeta_n$ on $\mathbb T^n$  which is proportional to 
\[\left|\Delta(\zeta_1, \dots, \zeta_n)\right|^2 \, ,\]
where $\Delta$ is the Vandermonde determinant. 
Moreover, due to the invariance of the Haar measure, the matrix of eigenvectors is again Haar distributed and independent of the eigenvalues. {\fab So that the vector,} $(\w_1, \dots, \w_n)$ as defined in  (\ref{spectralmeasure})   is uniformly distributed on the $n$-simplex and independent of $(\zeta_1, \dots, \zeta_n)$.
The law of the  V-coefficients is surprisingly explicit and given by the famous Killip-Nenciu theorem.

\begin{theorem}\cite[Theorem 1]{killipnen2004}
\label{KN}
Under $\mathbb P\sn$, the distribution of the random V-coefficients $\alpha\sn :=\left(\alpha\sn_0,\dots , \alpha\sn_{n-1}\right)$ has a density with respect to the  Lebesgue measure on $\D^{n-1}\times \T$ given by
\begin{equation}
\label{ref}
\prod_{k=0}^{n-2}  \frac{n-k-1}{\pi}\left(1 - |\alpha_{\fab k}|^2\right)^{n-k-2}.
\end{equation}
\end{theorem}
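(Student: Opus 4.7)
My plan is to pushforward the Haar law $\mathbb P\sn$ along the composition $U\mapsto(\text{eigenvalues, weights})\mapsto\mu_n\mapsto(\alpha_0,\dots,\alpha_{n-1})$ and identify the resulting density on $\mathcal{R}_n=\D^{n-1}\times\T$ via a change of variables. First I would use unitary invariance of $\mathbb P\sn$: if $U$ is Haar distributed, its matrix of eigenvectors is itself Haar distributed and independent of the eigenvalues. Hence $(\zeta_1,\dots,\zeta_n)$ on $\T^n$ has density $\frac{1}{n!(2\pi)^n}|\Delta(\zeta)|^2$, and the weights $(\w_1,\dots,\w_n)$ from \eqref{spectralmeasure} are uniform on the $(n{-}1)$-simplex, independently of the eigenvalues. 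Taking $(\w_1,\dots,\w_{n-1})$ as free coordinates, the joint density of $(\zeta,\w)$ is explicit and proportional to $|\Delta(\zeta)|^2$ with constant $(n-1)!/(n!(2\pi)^n)$.

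The homeomorphism $\psi$ of \eqref{defphimapping}, restricted to measures with $n$ support points, identifies $\mu_n$ with its V-coefficients $(\alpha_0,\dots,\alpha_{n-1})\in\mathcal{R}_n$, and the theorem reduces to computing the Jacobian of
\begin{equation*}
\Phi:(\zeta_1,\dots,\zeta_n,\w_1,\dots,\w_{n-1})\longmapsto(\alpha_0,\dots,\alpha_{n-1})
\end{equation*}
between open subsets of $\R^{2n-1}$. Once $|\det D\Phi|$ is known, the pushforward density will take the form $C\cdot\prod_{k=0}^{n-2}(1-|\alpha_k|^2)^{n-k-2}$, and the constant $C$ is pinned down from the identity $\int_{\D}(1-|\alpha|^2)^m\,d^2\alpha=\pi/(m+1)$ together with the requirement that both sides be probability densities; this forces $C=\prod_{k=0}^{n-2}(n-k-1)/\pi$ as announced.

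The main obstacle is the Jacobian computation itself. A natural route goes through the CMV representation \eqref{favardfiniC}: one factorizes the finite CMV matrix as $\mathcal{C}_\mu=\mathcal{L}\mathcal{M}$, where $\mathcal{L}$ and $\mathcal{M}$ are block-diagonal unitaries built from the Szeg\H{o} reflections
\begin{equation*}
\Theta(\alpha_k)=\begin{pmatrix}\bar\alpha_k&\rho_k\\ \rho_k&-\alpha_k\end{pmatrix},
\end{equation*}
and one decomposes $\Phi=\Phi_2\circ\Phi_1$, where $\Phi_1:\alpha\mapsto\mathcal{C}_\mu$ has Jacobian producing the expected powers of $(1-|\alpha_k|^2)$ block by block, while $\Phi_2^{-1}:\mathcal{C}\mapsto(\zeta,\w)$ is the standard spectral map on unitary matrices, contributing the classical factor $|\Delta(\zeta)|^2\prod_j\w_j$. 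An alternative that avoids the CMV bookkeeping proceeds inductively through the Szeg\H{o} recursion \eqref{recOPUC}: one extracts $\alpha_{n-1}\in\T$ from the monic polynomial $\Phi_n$ whose roots are the $\zeta_j$, reduces the problem to a measure with $n-1$ support points, and picks up the appropriate weight and $(1-|\alpha_k|^2)$ factor at each step. Either route yields the announced density and completes the proof.
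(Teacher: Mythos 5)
The paper does not prove this statement; it is quoted directly as Theorem~1 of \cite{killipnen2004}, so there is no internal proof against which to compare your proposal. Your strategy is nonetheless the right one and mirrors the standard Killip--Nenciu argument: start from unitary invariance of Haar measure, which decouples the eigenvalue density $\frac{1}{n!(2\pi)^n}|\Delta(\zeta)|^2$ from the uniformly distributed weights, then pushforward along the bijection $(\zeta,\w)\mapsto\alpha$ and compute a Jacobian. Your remark that the normalizing constant is forced by $\int_{\D}(1-|\alpha|^2)^m\,d^2\alpha = \pi/(m+1)$ once the density has the announced product form is also correct.

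The gap is that the heart of the proof --- the Jacobian computation --- is named but never carried out. You assert that ``once $|\det D\Phi|$ is known, the pushforward density will take the form $C\cdot\prod_{k=0}^{n-2}(1-|\alpha_k|^2)^{n-k-2}$,'' but this product form \emph{is} precisely what the Jacobian computation must deliver; it cannot be reverse-engineered from the normalization argument, which only fixes the overall constant after the functional form is established. What is missing is the explicit identity relating $|\Delta(\zeta)|^2\prod_j\w_j\,\ddd\zeta\,\ddd\w$ to $\prod_{k}(1-|\alpha_k|^2)^{n-k-2}\,\ddd\alpha$, which in Killip--Nenciu is obtained by combining the Vandermonde--weight identity $\prod_{j<k}|\zeta_j-\zeta_k|^2\prod_j\w_j = \prod_{k=0}^{n-2}(1-|\alpha_k|^2)^{n-1-k}$ (a consequence of the recursion \eqref{OPUCnorm} and the Heine-type formula for $\|\Phi_k\|^2$) with an induction on $n$ that peels off one mass point at a time to control the remaining Jacobian factor. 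You gesture at both the CMV factorization and the inductive Szeg\H{o}-recursion routes, but neither is executed, so as written the argument is a plan rather than a proof.
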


%
%
%
%

More generally, it is usual to equip $\mathbb{U}(n)$ with a probability measure of the form
\begin{align}
\label{PnV}
\ddd\mathbb P\sn_\V (U) = \frac{1}{\mathcal Z_n^\V} e^{-n \tr \V(U)} \ddd\mathbb P\sn (U)\,\fab .
\end{align}
{\fab Here,} the potential $\V:\T\to (-\infty,\infty]$ satisfies a convenient integrability assumption and  $\mathcal Z_n^{\mathcal V}$ is  the normalizing constant.
The density of eigenvalues under $\mathbb P_\V\sn$ is then proportional to 
\begin{equation}
\label{VdMC}
\left|\Delta(\zeta_1, \dots, \zeta_n)\right|^2 \exp \left( - n \sum_{i=1}^n \V(\zeta_i)\right)\, .\end{equation}
By invariance, the array $(\w_1, \dots, \w_n)$  is still uniformly distributed on the simplex and independent of the eigenvalues.

Using the same transformation as in Theorem \ref{KN}, we see that under $\PP_\V\sn$, the distribution $P_\V\sn$ of $\alpha\sn$ has a density
 with respect to the Lebesgue measure on $\D^{n-1}\times \T$ which is
\begin{align}
\label{krishnaT}
\left(\tilde Z_n^{\V}\right)^{-1} \exp  \left\{-n \left(\tr \mathcal V({\fab\mathcal G_n(\alpha)})- \sum_{k=0}^{n-2}\left(1-\frac{k+2}{n}\right)\log(1 - |\alpha_k|^2)\right)\right\}\,,
\end{align}
where $\tilde Z_N^{\V}$ is the normalization constant.

We always make the following assumption of the potential $\V$:  
\begin{itemize}
\item[(A1)] $\V$ is finite and continuous on $\T\setminus \{1\}$, with existing limit $\lim_{z\to 1} \V(z)\in (-\infty,\infty]$ in 1. 
\end{itemize}
This assumption implies the existence of 
a unique minimizer $\mu_\V$ of 
\begin{equation}
\label{4.70}
\mu \mapsto \mathcal E_\V (\mu) = \int_{\mathbb T} \V (z) \ddd\mu(z) - \int_{\mathbb T^2} \log|z-\zeta| \ddd\mu(z)\ddd\mu(\zeta)\,,\qquad \mu \in \mathcal{M}_1(\T),
\end{equation}
{\fab (see \cite{saff1997logarithmic} Theorem 1.3).}
The measure $\mu_\V$ is called the equilibrium measure and it is the weak limit of the spectral measure $\mu_n$ and of the empirical eigenvalue measure $\hat\mu_n$. 
We will suppose that either the support of $\mu_\V$ is $\mathbb T$ (no cut) or: 
\begin{itemize} 
\item[(A2)] One-cut regime: the support of $\mu_\V$ is a single arc $\hat{a}$.
\item[(A2')] Multi-cut regime: the support of $\mu_\V$ is a finite union of arcs $\hat{a}_1,\dots ,\hat{a}_m$.
\end{itemize}
{\fab Note that under assumption (A2), two cases may arise: either \( \hat{a} \) is a subset of \( [-\pi, \pi) \), or it is a subset of \( [0, 2\pi) \), depending on whether \( 0 \in \hat{a} \) or not.\footnote{We will identify \( \hat{a} \) with its image on \( \mathbb{T} \).}}

Under either (A2) or (A2'), $\mu_\V$ is characterized by the Euler-Lagrange equations, 
\begin{equation}
\label{ELT}
\mathcal J_\V(z)
\begin{cases} = 2\xi_\V & \hbox{if}\ z \in \supp (\mu_\V), \\
\geq 2 \xi_\V & \hbox{otherwise},
\end{cases}
\end{equation}
where $\mathcal J_\V$ is the effective potential
\nomenclature{$\mathcal J_\V$}{Effective potential}
\begin{align}
\label{poteffT}
\mathcal J_\V (z) := \V(z) -2\int_{\mathbb T} \log |z-\zeta|\!\ \ddd\mu_\V(\zeta)\,,
\end{align}
and $\xi_\V$ is the modified Robin constant. 
{\fab We refer to \cite{saff1997logarithmic} for the definitions and quantities mentioned above, which are drawn from classical potential theory.
Analogously to the real line setting studied in \cite{gamboanag2021}, we impose the following assumption.
}
\begin{itemize}
\item[(A3)] Control (of large deviations): $\mathcal J_\V$ achieves its global minimum value on the complement of  $\operatorname{Int}(\supp (\mu_\V))$ only on the boundary of this set.
\end{itemize}
{\fab Under this assumption,} we set
\begin{align} \label{defeffectivepot}
\mathcal F_{\V}(z) = \mathcal J_\V(z) - \inf
\mathcal J_\V(\zeta)\,.
\end{align}
{\fab We also observe that if the function \( \theta \mapsto v(\theta) := \mathcal{V}(e^{i\theta}) \) is convex and the support of \( \mu_{\mathcal{V}} \) is not the entire unit circle \( \mathbb{T} \), then assumptions (A2) and (A3) are satisfied (see, for example, Section 2.3.2 in \cite{gamboanag2017}).
A particularly important example is that of a symmetric Laurent polynomial potential,}
\begin{align}
\label{even}\mathcal V(z) = \sum_{l=1}^d \frac{v_l}{2}(z^l + z^{-l}). 
\end{align}
{\fab Here, for \( l = 1, \ldots, d \), we have \( v_l \in \mathbb{R} \), and the degree of \( \mathcal{V} \) is \( d \) provided that \( v_d \neq 0 \).}
For such potential, the equilibrium measure is supported by $\T$ or assumption (A2') holds ({\fab see} \cite[Corollary 1.2]{pritsker2005}).  

{\fab Let us briefly discuss polynomial potentials. Naturally, the behavior differs depending on whether the support is full, one-cut, or multi-cut. First, the equilibrium measure is often given explicitly by a trigonometric polynomial, as stated in the following lemma.}

\begin{lemma}\cite[Problem 16.3.1]{pastur_shcherbina}\footnote{There is a typo in Problem 16.3.1 but not in Problem 16.3.2}.
\label{lemps}
{\fab If \( \mathcal{V} \) is a polynomial of the form given in \eqref{even}, then the equilibrium measure \( \mu_{\mathcal{V}} \) admits a Lebesgue density given by
\begin{align}
\rho_{\mathcal{V}}(e^{i\theta}) = \frac{1}{2\pi} \left(1 - \sum_{l=1}^M l v_l \cos(l\theta) \right),
\end{align}
provided that \( \rho_{\mathcal{V}} \) is nonnegative on \( \mathbb{T} \); this holds in particular if \( \sum_{l=1}^M l |v_l| \leq 1 \).}
\end{lemma}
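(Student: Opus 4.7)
The plan is to verify directly that the candidate density $\rho_{\mathcal V}$ satisfies the Euler--Lagrange characterization \eqref{ELT}, which, together with the uniqueness of the minimizer of $\mathcal E_{\mathcal V}$, identifies it with $\mu_{\mathcal V}$. Since $\rho_{\mathcal V}$ is a trigonometric polynomial, the computations reduce to standard Fourier identities on $\mathbb T$.

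First I would check that, whenever the nonnegativity holds, $\rho_{\mathcal V}$ defines a probability density: this is immediate because $\int_0^{2\pi}\cos(l\theta)\,\ddd\theta/(2\pi)=0$ for $l\geq 1$, so the constant term $1$ ensures mass one. Nonnegativity under the sufficient condition $\sum_{l=1}^M l|v_l|\leq 1$ follows from the trivial bound $|\cos(l\theta)|\leq 1$, giving $1-\sum_l lv_l\cos(l\theta)\geq 1-\sum_l l|v_l|\geq 0$.

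The key step is the computation of the logarithmic potential of $\rho_{\mathcal V}$. Starting from the Fourier expansion
\begin{equation*}
\log|e^{i\theta}-e^{i\phi}|=-\sum_{k\geq 1}\frac{\cos(k(\theta-\phi))}{k},
\end{equation*}
valid in $L^2([0,2\pi])$, one gets by orthogonality of the trigonometric system that
\begin{equation*}
\int_0^{2\pi}\cos(l\phi)\log|e^{i\theta}-e^{i\phi}|\,\frac{\ddd\phi}{2\pi}=-\frac{\cos(l\theta)}{2l}\qquad(l\geq 1),
\end{equation*}
and $\int_0^{2\pi}\log|e^{i\theta}-e^{i\phi}|\,\ddd\phi/(2\pi)=0$. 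Plugging these into
\begin{equation*}
U^{\mu_{\mathcal V}}(e^{i\theta}):=\int_{\mathbb T}\log|e^{i\theta}-\zeta|\,\ddd\mu_{\mathcal V}(\zeta)
=-\sum_{l=1}^M lv_l\cdot\Bigl(-\frac{\cos(l\theta)}{2l}\Bigr)=\frac{1}{2}\sum_{l=1}^M v_l\cos(l\theta),
\end{equation*}
and recalling from \eqref{even} that $\mathcal V(e^{i\theta})=\sum_{l=1}^M v_l\cos(l\theta)$, we conclude that the effective potential satisfies
\begin{equation*}
\mathcal J_{\mathcal V}(e^{i\theta})=\mathcal V(e^{i\theta})-2U^{\mu_{\mathcal V}}(e^{i\theta})=0\qquad\text{for every }\theta\in[0,2\pi).
\end{equation*}
Hence $\mathcal J_{\mathcal V}$ is constant on all of $\mathbb T$, so the Euler--Lagrange conditions \eqref{ELT} are satisfied trivially with modified Robin constant $\xi_{\mathcal V}=0$. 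By the uniqueness of the equilibrium measure, $\rho_{\mathcal V}$ is the density of $\mu_{\mathcal V}$.

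I do not expect any real obstacle; the only mild care is justifying the termwise integration of the Fourier series for $\log|e^{i\theta}-e^{i\phi}|$ against a trigonometric polynomial, which is immediate by $L^2$-convergence. The sufficient condition $\sum_{l}l|v_l|\leq 1$ is stated exactly so as to absorb the worst case $\cos(l\theta)=\pm 1$ and guarantee positivity; it is not sharp, and one may obtain better ranges by inspecting the minimum of the explicit trigonometric polynomial on a case-by-case basis.
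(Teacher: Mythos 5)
Your verification is correct. The paper does not prove this lemma itself but cites it to Pastur--Shcherbina (Problem 16.3.1), so there is no internal paper proof to compare against; your direct verification via the Euler--Lagrange characterization is exactly the canonical argument, and it is sound. The Fourier computation $\int_0^{2\pi}\cos(l\phi)\log|e^{i\theta}-e^{i\phi}|\,\ddd\phi/(2\pi)=-\cos(l\theta)/(2l)$ is right, the resulting identity $\mathcal J_{\mathcal V}\equiv 0$ on $\mathbb T$ holds, and since a nonzero trigonometric polynomial has only finitely many zeros the candidate measure has full support and finite logarithmic energy, so the Euler--Lagrange conditions together with uniqueness of the minimizer of $\mathcal E_{\mathcal V}$ do identify it as $\mu_{\mathcal V}$. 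One small remark: strictly speaking you should note that termwise integration is justified not just by $L^2$-convergence of the Fourier series of $\log|e^{i\theta}-e^{i\phi}|$, but because you are pairing it against an $L^2$ (indeed trigonometric polynomial) test function, so Parseval applies; you say as much at the end, and no further care is needed.
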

{\fab Conversely, and more generally, consider densities proportional to the function}
\begin{align}
\label{general_form}
H(e^{i\theta}) = \prod_{j=1}^k [1 - \cos(\theta -\theta_j)]^{m_j} = \sum_{l= -M}^M h_l e^{i l\theta}
\end{align}
{\fab for $h_j \in \C$ and  $M=\sum_{j=1}^k m_j$. The corresponding external potentials are then Laurent polynomials with no constant term, given by,}
\[\mathcal V(z) = \sum_{l=1}^M \left(\frac{h_l}{l} z^l + \frac{h_{-l}}{l}z^{-l}\right)\,.\]
{\fab In \cite{breuersizei2018}, the authors consider three specific ensembles with $k = 2$ in equation~(\ref{general_form}), where both roots are equal ($\theta_1 = \theta_2 = 0$). These examples are summarized in Table \ref{tab:3examples}.}

\begin{table}[htbp]
\centering
\begin{tabular}{@{}lll@{}}
\toprule
\( (m, n) \) & \( \rho^{m,n}
\) & \( \mathcal{V}^{m,n}
\) \\
\midrule
\( (1,0) \) & \( \dfrac{1}{2\pi}(1 - \cos\theta) \) & \( \cos\theta\)\\ 
\\[-0.8em]
\( (1,1) \) & 
\(
\begin{aligned}
&\dfrac{1}{\pi}(1 - \cos\theta)(1 + \cos\theta)
\end{aligned}
\) & \( \dfrac{1}{2}\cos 2\theta \)\\
\\[-0.8em]
\( (2,0) \) & 
\(
\begin{aligned}
&\dfrac{1}{3\pi}(1 - \cos\theta)^2 
\end{aligned}
\) & 
\(
\begin{aligned}
& -\dfrac{1}{6} \cos 2\theta + \dfrac{4}{3} \cos \theta 
\end{aligned}
\) \\
\bottomrule
\\
\end{tabular}
\caption{{\fab Expressions for \( \rho^{m,n} \) and their corresponding potentials \( \mathcal{V}^{m,n} \)}}
\label{tab:3examples}
\end{table}

\section{The workshop of Breuer-Simon-Zeitouni}
\label{sec:BSZworkshop}

{\fab A general method for addressing large deviation principles with the goal of deriving sum rules is presented in \cite{breuersizei2018}. The results are established for polynomial potentials whose associated equilibrium measures have full support. 
An additional assumption is imposed on $\rho_\V$
introduced solely to simplify the computational analysis. 
For the sake of completeness, we summarize in this section the main results obtained through this method (Theorems \ref{BSZabstract} and \ref{fullssr} below). We highlight one of its key components (namely, the decomposition of \( \operatorname{tr}\, \mathcal{V}(\mathcal{G}_n(\alpha)) \), recalled in Proposition \ref{propclue}). The section concludes with a summary of the sum rules corresponding to the cases listed in Table \ref{tab:3examples}.}


\subsection{General results}

{\fab One of the  key tool to handle polynomial potential is the following proposition that leads to a decomposition of the density \eqref{krishnaT}.} The general formula in \eqref{227} is taken from \cite[Theorem 3.2]{breuersizei2018}, for the representation of the function $G$ we used a later development in \cite{yan2018}, see the proof of Lemma 2.1 on p. 461 therein. 
\medskip

\begin{proposition}
\label{propclue}
{\fab Let \( \mathcal{V} \) be a Laurent polynomial of degree \( d \). Then, there exist polynomials \( F_{\pm} \) and \( G \), independent of \( n \), such that:
\begin{itemize}
    \item \( G \) depends on \( d+1 \) consecutive values of \( \alpha_j \) and \( \bar{\alpha}_j \),
    \item \( F_{\pm} \) depend on \( d \) such variables,
\end{itemize}
and for all \( n \geq d+1 \), the following holds,
\begin{align}
\label{227}
\tr\!\ \V\left(\mathcal G_n (\alpha)\right) =  F_- (\alpha_{[d-1]}) + F_+(\alpha_{[n-d, n-1]})
+ \sum_{j=0}^{n-1-d} G(\alpha_{[j, j+d]})\,.
\end{align}
More precisely,}
\begin{align}
\label{defGK}
 G(x_{[d]})&= \sum_{k=1}^d \Gamma_{k}(x_{[d]})\\
 \Gamma_{k}(x_{[d]}) &= \sum _{0 \leq j_1\leq j_2\leq \dots \leq j_{2k}\leq k}c_{j_1, \dots, j_{2k}} x_{j_1}\bar x_{j_2}\dots x_{j_{2k-1}}\bar x_{j_{2k}}\,,
 \end{align}
 for some real coefficients $c_{j_1, \dots, j_{2k}}$.
\end{proposition}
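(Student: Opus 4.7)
The plan is to reduce to the CMV representation, where the pentadiagonal structure makes the localization manifest. Since $\mathcal{G}_n(\alpha)$ and $\mathcal{C}_n(\alpha)$ both represent the multiplication operator by $z$ on the same $n$-dimensional model space in two different orthonormal bases, they are unitarily conjugate, and hence
\[
\tr\, \mathcal{V}(\mathcal{G}_n(\alpha)) = \tr\, \mathcal{V}(\mathcal{C}_n(\alpha)).
\]
By linearity of the trace it is enough to prove a decomposition of the form \eqref{227} for each monomial $\tr\, \mathcal{C}_n^l$ with $|l|\le d$ and then combine the pieces. Moreover, since $\mathcal{C}_n$ is unitary, $\mathcal{C}_n^{-l}=(\mathcal{C}_n^*)^l$, so it suffices to handle $l\geq 0$ and take complex conjugates for the negative powers.

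Next, I would expand the trace as a sum over closed walks. Because $\mathcal{C}_n$ is pentadiagonal, the diagonal entry $(\mathcal{C}_n^l)_{ii}$ is a sum of products $(\mathcal{C}_n)_{i,i_1}(\mathcal{C}_n)_{i_1,i_2}\cdots(\mathcal{C}_n)_{i_{l-1},i}$ taken over sequences with $|i_k-i_{k+1}|\le 2$. Reading off \eqref{favardfiniC}, each such product is a monomial in $\alpha_k,\bar\alpha_k$ together with factors $\rho_k=\sqrt{1-|\alpha_k|^2}$, whose indices span a bounded window around $\lfloor i/2\rfloor$. Since a closed walk must traverse any given off-diagonal edge an even number of times, the $\rho$-factors appear in pairs; using $\rho_k^2=1-|\alpha_k|^2$ we can absorb them into numerical coefficients, so that each monomial becomes a genuine polynomial in $\alpha,\bar\alpha$.

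The key reorganization is then the following. Rather than grouping contributions by the diagonal index $i$, I would regroup them by the starting index $j$ of the monomial's $\alpha$-window. Each monomial of the form $\alpha_{j_1}\bar\alpha_{j_2}\cdots \alpha_{j_{2k-1}}\bar\alpha_{j_{2k}}$ appearing in the expansion has its indices falling in some window $[j,j+d]$ of length $d+1$ (for a suitable $j$), where $2k\le 2l\le 2d$ is the total number of $\alpha/\bar\alpha$ factors. In the bulk, every such monomial occurs with the same combinatorial coefficient for every admissible shift of $j$, and collecting these produces the translation-invariant sum $\sum_{j=0}^{n-1-d}G(\alpha_{[j,j+d]})$. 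The polynomial $G$ then splits as $\sum_{k=1}^d\Gamma_k$ according to the total degree $2k$, which yields the explicit form \eqref{defGK}. The diagonal entries $(\mathcal{C}_n^l)_{ii}$ for $i$ within distance $O(d)$ of $0$ or $n-1$ generate monomials whose indices would fall outside $[0,n-1]$, or which involve the boundary convention $\alpha_{-1}=-1$: these are precisely the missing shifts, and they accumulate into $F_-(\alpha_{[d-1]})$ and $F_+(\alpha_{[n-d,n-1]})$. The reduction to $d$ rather than $d+1$ variables reflects the fact that the first and last rows of $\mathcal{C}_n$ each carry one fewer nonzero entry than a generic interior row.

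The main obstacle will be the careful bookkeeping required to verify that the bulk coefficient attached to each monomial is genuinely shift-independent, and that the boundary polynomials $F_\pm$ pick up exactly the missing contributions --- neither more nor less. A subsidiary difficulty is to show that the coefficients $c_{j_1,\dots,j_{2k}}$ in \eqref{defGK} are real, which should follow from the fact that $\mathcal{V}$ takes real values on $\mathbb{T}$, so that $\tr\, \mathcal{V}(\mathcal{C}_n)$ is real, combined with the Laurent symmetry $v_{-l}=\overline{v_l}$ of the coefficients of $\mathcal{V}$.
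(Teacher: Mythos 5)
Your overall strategy --- pass to the CMV representation, expand the trace as a sum over localized walks, absorb the $\rho$-factors via $\rho_k^2=1-|\alpha_k|^2$, and split the result into a shift-invariant bulk sum plus boundary corrections --- is the right shape of argument and is essentially what Breuer--Simon--Zeitouni and Yan do. The reduction $\tr\,\mathcal{V}(\mathcal{G}_n)=\tr\,\mathcal{V}(\mathcal{C}_n)$ by unitary equivalence is sound, as is the reduction to individual powers by linearity.

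However, the justification you give for the $\rho$-pairing is not correct as stated. A closed walk on a \emph{pentadiagonal} matrix can use jumps of size $2$, and such walks certainly do \emph{not} traverse every off-diagonal edge an even number of times: the walk $0\to 2\to 1\to 0$ uses each of the three edges exactly once. For a Jacobi (tridiagonal) matrix, or for the GGT matrix viewed through the ``gap-crossing'' lens, the parity argument is valid, but you explicitly switch to $\mathcal{C}_n$, where it fails. That the $\rho_j$ nevertheless appear in pairs in $\tr\,\mathcal{C}^l$ is true, but for a different reason: the CMV factorization $\mathcal{C}=\mathcal{L}\mathcal{M}$ into block-diagonal matrices built from the $2\times 2$ blocks $\Theta_j$, each of which carries $\rho_j$ only in its off-diagonal slots. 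A closed walk through the alternating $\mathcal{L}$-/$\mathcal{M}$-product must swap positions within each $\Theta_j$-block an even number of times, and that is what forces $\rho_j$ to even powers. Without appealing to this structure the pairing claim is simply asserted.

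Two further gaps. First, the translation-invariance of the bulk coefficient is more delicate than you suggest: the CMV matrix is only $2$-periodic (even and odd rows have different shapes), so the naive statement that ``every monomial occurs with the same combinatorial coefficient for every admissible shift of $j$'' is not automatic and needs to be shown rather than asserted; in practice the cancellations work out, but this is a genuine verification. Second, the very specific structure of $\Gamma_k$ announced in \eqref{defGK} --- balanced degree $k$ in $x$ and $k$ in $\bar x$, and indices confined to $[0,k]$ rather than to $[0,d]$ --- is not explained by your outline; it requires tracking, for each $\tr\,\mathcal{C}^l$, exactly which $\alpha$-indices a walk of length $l$ can touch and why the resulting window has size $l+1$ (not $2l+1$), and why the unbalanced terms sit only in $F_\pm$ via the convention $\alpha_{-1}=-1$. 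A minor slip: the $\alpha$-window attached to row $i$ of $\mathcal{C}_n$ is centered near $i$, not near $\lfloor i/2\rfloor$.

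In short, the skeleton is right, but the claimed reason for the crucial $\rho$-cancellation is wrong in the CMV setting, and you would need to either (a) invoke the $\mathcal{L}\mathcal{M}$ block factorization, or (b) work with the GGT matrix, where the edge-crossing parity argument is valid but the localization requires a separate argument exploiting that the single subdiagonal limits how far a length-$l$ walk can stray.
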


As a consequence, the authors of \cite{breuersizei2018} derive the following \emph{abstract gem} and sum rule in the no-cut case.

\begin{theorem}[{\cite[Theorem 3.5]{breuersizei2018}}]
\label{BSZabstract}
If $\V$ is a symmetric Laurent polynomial such that the support of $\mu_\V$ is $\mathbb T$, then
\begin{align} \label{eq:BSZabstract}
\lim_{L\to \infty}\sum_{j=0}^L \left[G(\alpha_{[j, j+d]}) - \log (1- |\alpha_j|^2)\right]
\end{align}
exists and is finite if and only if $\mathcal K(\mu_\V|\mu)$ is finite.
\end{theorem}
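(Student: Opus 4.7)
The plan is to derive the gem by matching two large deviation principles (LDPs) for the sequence $\mu_n$ distributed under $\mathbb P_\V^{(n)}$: one obtained from the eigenvalue/weight representation, the other from the V-coefficient representation. Since the mapping $\psi$ in \eqref{defphimapping} is a homeomorphism between $\mathcal M_1(\T)$ and $\mathcal R$, the uniqueness of the good rate function forces the two LDPs to produce the same rate function after transport, and the equivalence stated in the theorem will read off as the statement that both are finite on the same set.

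First I would treat the measure side. Starting from the eigenvalue density \eqref{VdMC} together with the independent Dirichlet weights on the simplex, the standard Coulomb-gas technique (as carried out in \cite{gamboanag2017}) gives an LDP at speed $n$ for $\mu_n$ in $\mathcal M_1(\T)$ equipped with the weak topology, with good rate function
$$\mathcal I_{\meas}(\mu) = \mathcal E_\V(\mu) - \inf \mathcal E_\V.$$
In the no-cut regime, the Euler--Lagrange equations \eqref{ELT} hold with equality everywhere on $\T$, so $\mathcal J_\V \equiv 2\xi_\V$; expanding $\mathcal E_\V(\mu) - \mathcal E_\V(\mu_\V)$ and using this identity, one recovers the well-known relation $\mathcal I_{\meas}(\mu) = \mathcal K(\mu_\V|\mu)$. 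Finiteness of $\mathcal I_{\meas}(\mu)$ is therefore equivalent to finiteness of $\mathcal K(\mu_\V|\mu)$.

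Second I would treat the coefficient side. Substituting the Breuer--Simon--Zeitouni decomposition \eqref{227} into \eqref{krishnaT}, the log-density of $\alpha^{(n)}$ takes the form
$$-n\sum_{j=0}^{n-1-d}\bigl[G(\alpha_{[j,j+d]}) - \log(1-|\alpha_j|^2)\bigr] + R_n(\alpha) - \log \tilde Z_n^\V,$$
where $R_n$ collects the boundary polynomials $F_\pm(\alpha_{[d-1]}),F_+(\alpha_{[n-d,n-1]})$ together with the linear correction coming from the $\tfrac{k+2}{n}$ factor in \eqref{ref}; both contribute only $O(1)$ in each fixed window of indices and are controlled on compact subsets of $\mathcal R$. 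Using this representation together with asymptotics of $\tilde Z_n^\V$, one establishes an LDP for $\alpha^{(n)}$ on $\mathcal R$ at speed $n$ with rate function
$$\mathcal I_{\coeff}(\alpha) = \lim_{L\to\infty}\sum_{j=0}^L \bigl[G(\alpha_{[j,j+d]}) - \log(1-|\alpha_j|^2)\bigr] - c,$$
whenever the limit exists, and $+\infty$ otherwise; the normalizing constant $c$ is the value of the limit at the V-coefficient sequence of $\mu_\V$, which minimizes $\mathcal I_{\coeff}$. Applying the contraction principle to $\psi$ and $\psi^{-1}$ then forces $\mathcal I_{\meas}(\mu) = \mathcal I_{\coeff}(\psi(\mu))$, which is exactly the equivalence claimed.

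The hardest step by far is the coefficient-side LDP. The $\alpha_j$ are genuinely dependent through the bulk sum in \eqref{227}, which has finite range but is not a sum of i.i.d.\ contributions, so the usual Cram\'er or Varadhan machinery does not apply directly. One must (i) prove exponential tightness of $\alpha^{(n)}$ in the product topology of $\mathcal R$, (ii) control the endpoint degeneracy due to $\alpha_{n-1}\in \T$ while interior coefficients live in $\D$ (the source of the term $F_+$), and most delicately (iii) upgrade the liminf/limsup bounds into actual \emph{existence} of the limit in \eqref{eq:BSZabstract}. The natural route for (iii) is to exhibit approximating sequences of measures close to $\mu_\V$ for which both $\mathcal K(\mu_\V|\mu)$ and the truncated sums over $G$ can be computed and matched.
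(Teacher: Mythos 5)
Your overall framework---matching the rate function obtained from the spectral (eigenvalue/weight) side with the one obtained from the V-coefficient side---is indeed the method of Breuer--Simon--Zeitouni and the same one the present paper deploys for its new theorems, so the skeleton of the argument is right. Note that the paper does not reprove this theorem; it cites it and reuses the machinery, so the relevant comparison is with Theorems \ref{LDPsp}, \ref{BSZmod1} and the proofs of Theorems \ref{Unitabstractgem}, \ref{Unitabstractnewsumrule}.

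However, the measure-side step contains a genuine error. You assert that the Coulomb-gas technique gives an LDP at speed $n$ for $\mu_n$ with good rate function $\mathcal E_\V(\mu)-\inf\mathcal E_\V$, and that this expression reduces to $\mathcal K(\mu_\V\mid\mu)$ using the Euler--Lagrange equations. Neither claim is correct. The energy functional $\mathcal E_\V-\inf\mathcal E_\V$ is the rate function for the \emph{empirical eigenvalue measure} $\hat\mu_n$ at speed $n^2$, not for the weighted spectral measure $\mu_n$ at speed $n$; these are different objects with different scalings. At speed $n$, the fluctuations of $\mu_n$ are driven by the random weights $(\w_1,\dots,\w_n)$ interacting with the (now essentially frozen) eigenvalue configuration, and the rate function that emerges is the reversed Kullback--Leibler divergence $\mathcal K(\mu_\V\mid\mu)$ --- as stated in Theorem \ref{LDPsp} with $E(\mu)=\emptyset$ --- which is not the same functional as $\mathcal E_\V(\mu)-\mathcal E_\V(\mu_\V)$ (the former involves the Radon--Nikodym density $\ddd\mu_\V/\ddd\mu$, the latter is quadratic in $\mu$; there is no algebraic identity linking them via \eqref{ELT}). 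As written, your measure-side derivation does not produce the left-hand side of the gem, so the whole matching argument is not closed.

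A secondary point: on the coefficient side you would actually obtain, via Dawson--G\"artner and the bound \eqref{ratemaj}, that $\sup_L$ of the partial sums is finite iff $\mathcal K(\mu_\V\mid\mu)<\infty$; upgrading this to \emph{existence} of the limit in \eqref{eq:BSZabstract} requires the a priori convergence $\alpha_k\to 0$ (from Rakhmanov's theorem, applicable precisely because $\mathcal K(\mu_\V\mid\mu)<\infty$ forces $\mu$ to be a.e.\ positive on $\T$), which kills the boundary error $m_L^\V$. You correctly flag the existence-of-limit issue as the hard part, but your proposal does not name the ingredient (Rakhmanov) that actually resolves it; ``exhibit approximating sequences of measures close to $\mu_\V$'' is not a route to existence of a specific scalar limit for a fixed $\mu$.
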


\begin{theorem}[{\cite[Theorem 3.6]{breuersizei2018}}]
\label{fullssr}
If $\V$ is a symmetric Laurent polynomial such that the support of $\mu_\V$ is $\mathbb T$, then for all measures $\mu$ on $\mathbb T$
\begin{align}
\notag
\mathcal K(\mu_\V | \mu) =& F_-(\alpha_{[d-1]}) + \lim_{L\to \infty}\sum_{j=0}^L \left[G(\alpha_{[j, j+d]}) - \log (1- |\alpha_j|^2)\right]\\
\label{231}
&-F_-(\alpha_{[d-1]}^\V) - \sum_{j=0}^\infty \left[G(\alpha^\V_{[j, j+d]}) - \log (1- |\alpha^\V_j|^2)\right]
\,.
\end{align}
\end{theorem}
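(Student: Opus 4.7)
The strategy follows the probabilistic derivation of sum rules via large deviations developed in \cite{gamboarou2010,gamboanag2017}. We aim to establish, at speed $n$, a large deviation principle (LDP) for the random spectral measure $\mu_n$ under $\PP_\V\sn$---equivalently, for its V-coefficient vector $\alpha\sn = \psi(\mu_n)$ via \eqref{defphimapping}---and to compute the rate function in two different parametrizations. On the measure side, the eigenvalues have joint density \eqref{VdMC} and the weights are independently uniform on the simplex, so the LDP for weighted spectral measures on $\T$ (see \cite{gamboanag2017}, Section~2) yields a good rate function of the form
\[\mathcal I_{\meas}(\mu) = \mathcal K(\mu_\V|\mu) + c_{\meas}(\V),\]
the constant $c_{\meas}(\V)$ absorbing the $\mu$-independent contributions coming from the equilibrium energy and the partition function $\mathcal Z_n^\V$.

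Substituting the trace decomposition \eqref{227} of Proposition \ref{propclue} into the explicit density \eqref{krishnaT}, the density of $\alpha\sn$ takes the form
\[\frac{1}{\tilde Z_n^\V}\exp\Bigl\{-n\Bigl[F_-(\alpha_{[d-1]}) + \sum_{j=0}^{n-1-d}\bigl(G(\alpha_{[j,j+d]}) - \log(1-|\alpha_j|^2)\bigr) + R_n(\alpha)\Bigr]\Bigr\},\]
where
\[R_n(\alpha) = F_+(\alpha_{[n-d,n-1]}) - \sum_{k=n-d}^{n-2}\log(1-|\alpha_k|^2) + \frac{1}{n}\sum_{k=0}^{n-2}(k+2)\log(1-|\alpha_k|^2).\]
Provided $R_n$ is superexponentially negligible at speed $n$, the contraction principle applied through the homeomorphism $\psi$ delivers a coefficient-side LDP with good rate function
\[\mathcal I_{\coeff}(\alpha) = F_-(\alpha_{[d-1]}) + \lim_{L\to\infty}\sum_{j=0}^{L}\bigl[G(\alpha_{[j,j+d]}) - \log(1-|\alpha_j|^2)\bigr] + c_{\coeff}(\V),\]
the existence of the limit on the effective domain being exactly the statement of the abstract gem, Theorem \ref{BSZabstract}.

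By uniqueness of the LDP rate function, $\mathcal I_{\meas}(\mu) = \mathcal I_{\coeff}(\psi(\mu))$ for every $\mu\in\mathcal M_1(\T)$. Evaluating this identity at $\mu = \mu_\V$, where $\mathcal K(\mu_\V|\mu_\V) = 0$, pins down the constant
\[c_{\meas}(\V) - c_{\coeff}(\V) = F_-(\alpha^\V_{[d-1]}) + \sum_{j=0}^\infty\bigl[G(\alpha^\V_{[j,j+d]}) - \log(1-|\alpha^\V_j|^2)\bigr],\]
and rearranging yields the sum rule \eqref{231}. The main obstacle is the coefficient-side analysis: one must show that both the boundary contribution in $R_n$ coming from the last $d$ coefficients (through $F_+$ and the tail sum of $\log(1-|\alpha_k|^2)$) and the drift-like term $\frac{1}{n}\sum_k (k+2)\log(1-|\alpha_k|^2)$ are superexponentially negligible, which requires uniform tightness of the V-coefficients under $P_\V\sn$ and a careful projective-limit argument to pass from the finite-$L$ truncation to the full infinite-dimensional LDP. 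Proving the pointwise convergence of the partial sums on the effective domain $\{\mathcal I_{\coeff}<\infty\}$ (the abstract gem) is a prerequisite for the coefficient-side rate function to be well-defined.
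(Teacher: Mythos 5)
The paper does not prove Theorem \ref{fullssr}: it is cited from \cite{breuersizei2018}, and the methodology is what the paper later adapts to the gapped case (Theorems \ref{Unitabstractgem}, \ref{Unitabstractnewsumrule} and their proofs via Theorem \ref{BSZmod1} in Section \ref{sec:proofs}). Your plan matches that methodology at the structural level --- measure-side LDP with rate $\mathcal K(\mu_\V|\cdot)$, coefficient-side LDP from the density \eqref{krishnaT} via Proposition \ref{propclue}, identification of the two rate functions, and evaluation at $\mu_\V$ to fix the constant --- so the overall route is the intended one.

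There is, however, a genuine imprecision at the step you flag as the ``main obstacle,'' and it is worth being clear that the actual argument is not ``show $R_n$ is superexponentially negligible.'' The term $\frac{1}{n}\sum_{k=0}^{n-2}(k+2)\log(1-|\alpha_k|^2)$ is \emph{not} negligible at speed $n$ over the full range of indices (for $k$ of order $n$ the prefactor $\frac{k+2}{n}$ is order one, and $\log(1-|\alpha_k|^2)$ is unbounded near $\partial\D$), and there is no useful a priori tightness estimate that makes it so. What the proof in \cite{breuersizei2018} (and the paper's Theorem \ref{BSZmod1}) actually does is: (i) fix $L$, isolate the part of the exponent depending only on $\alpha_{[L-1]}$, which is $\mathcal W_L$ plus an $O(L^2/n)$ correction from the indices $k<L$; (ii) bound the mixed term $H_n^{\mathrm m}$ depending on $\alpha_{[L-d,L+d]}$ by the quantity $m^\V_L(x_{[L-d,L-1]})$; (iii) observe that the remaining tail $H_n^{\mathrm h}$, which contains $F_+$ and all indices $k\ge L$ (including the troublesome $(k+2)/n$ terms), depends only on $\alpha_{[L,n-1]}$ and therefore \emph{cancels exactly} when one takes the ratio of small-ball probabilities around $x$ and around $\alpha^\V_{[L-1]}$; and (iv) only then apply Dawson--G\"artner, using that $\mathcal I^L_{\coeff}(\alpha_{[L-1]})$ is nondecreasing in $L$, so the sup is an increasing limit. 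The cancellation in (iii) is the crux; ``superexponential negligibility'' is not the right mechanism and cannot be proved.

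Two smaller points. First, a rate function vanishes at the minimizer of the LDP by construction, so writing $\mathcal I_{\meas}=\mathcal K(\mu_\V|\cdot)+c_{\meas}(\V)$ is awkward: $c_{\meas}(\V)=0$. Your final evaluation at $\mu_\V$ still produces the correct identity, but the constant bookkeeping lives entirely on the coefficient side (it is the constant $-F_-(\alpha^\V_{[d-1]})-\sum_j[G(\alpha^\V_{[j,j+d]})-\log(1-|\alpha^\V_j|^2)]$ that forces $\mathcal I_{\coeff}(\alpha^\V)=0$). Second, calling the abstract gem a ``prerequisite'' for the coefficient rate function to be well defined has the logic the wrong way round: the Dawson--G\"artner representation $\mathcal I_{\coeff}(\alpha)=\sup_L\mathcal I^L_{\coeff}(\alpha_{[L-1]})$ is automatically well defined as an increasing limit, and the gem (existence of the limit of the partial sums iff $\mathcal K(\mu_\V|\mu)<\infty$) is a \emph{consequence} of comparing this increasing limit with the measure-side rate, not an input.
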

As in the Verblunsky-Szeg{\H o} theorem, the finiteness of $\mathcal K(\mu_\V | \mu)$ implies that $\mu$ is supported by $\T$ and then $\alpha_k\to 0$ as a consequence of Rakhmanov's theorem (see Theorem \ref{Rakhm}). Therefore, if \eqref{eq:BSZabstract} or the right hand side of \eqref{231} are finite, we have $\alpha_k\to 0$. This allows to rewrite those formulas so that the decomposition of $\tr\!\ \V\left(\mathcal G_n (\alpha)\right)$ as in Proposition \ref{propclue} is not necessary. Using the boundedness of V-coefficients, 
\begin{align} \label{decompositionerror}
\left| \left( F_-(\alpha_{[d-1]}) + \sum_{j=0}^L G(\alpha_{[j, j+d]}) \right)- \tr\!\ \V\left(\mathcal G_L (\alpha)\right)  \right| \leq | F_+(\alpha_{[n-d, n-1]}) | \leq C
\end{align} 
for a constant $C$ depending only on $\V$. In the situation of Theorem \ref{BSZabstract}, this implies that $\mathcal K(\mu_\V | \mu)$ is finite if and only if 
\begin{align} \label{eq:BSZabstract-2}
\lim_{L\to\infty} \left( \tr\!\ \V\left(\mathcal G_L (\alpha)\right) - \sum_{j=0}^L \log (1- |\alpha_j|^2) \right)
\end{align}
exists and is finite. Moreover, the sum rule in Theorem \ref{fullssr} may be rewritten as 
\begin{align}\label{231-2}
\mathcal K(\mu_\V | \mu) = 
\lim_{L\to\infty} \left( \tr\!\ \V\left(\mathcal G_L (\alpha)\right) - \tr\!\ \V\left(\mathcal G_L (\alpha^\V)\right) - \sum_{j=0}^L \log \left( \frac{1- |\alpha_j|^2}{1-|\alpha_j^\V|^2}\right) \right) .
\end{align}
For our new gems and sum rules in Section \ref{sec:our_results}, we will use this formulation, although the decomposition in Proposition \ref{propclue} is still crucial in the proofs.

\subsection{The three basic examples}
\label{sec:basicexamples}
In this section, we summarize the sum rules and the associated gems computed in \cite{du2023} for the examples listed in Table \ref{tab:3examples}. The corresponding gems are presented in Table \ref{tab:gemtable} below, along with references to the equations in the Appendix where the respective sum rules are stated.
%
%
 \medskip
\begin{table}[htbp]
\centering
\begin{tabular}{@{}lcc@{}}
\toprule
\( (m, n) \) & Gems
& Sum Rule
 \\
\midrule
\( (1,0) \) & 
$\mathcal K(\mu^{1,0}| \mu) < \infty \Longleftrightarrow \sum_{k=1}^\infty |\alpha_{k+1} - \alpha_{k}|^2 + |\alpha_k|^4 < \infty\,$ & (\ref{sr10})\\ 
\\[-0.8em]
\( (1,1) \) & 
$\mathcal K(\mu^{1,1}| \mu) < \infty \Longleftrightarrow \sum_{k=1}^\infty |\alpha_{k+2} - \alpha_{k}|^2 + |\alpha_k|^4 < \infty\,$ & (\ref{sr11})\\
\\[-0.8em]
\( (2,0) \) & 
$\mathcal K(\mu^{2,0}|\mu) <\infty \ \Longleftrightarrow \ \sum_{k=0}^\infty\left(|\alpha_{k+2}-2\alpha_{k+1} +\alpha_k|^2 + |\alpha_k|^6\right) <\infty\,$ & 
(\ref{sr20}) \\
\bottomrule
\\
\end{tabular}
\caption{{\fab Gems and sum rules for the potentials \( \mathcal{V}^{m,n} \)}}
\label{tab:gemtable}
\end{table}

\section{\fab Main results. Sum rules and gems for proper arcs}
\label{sec:our_results}

We now present our new sum rules and gems for the general polynomial case. Unlike in Section \ref{sec:BSZworkshop}, we assume that the support of $\mu_\V$ is not the full circle, that is, we are in the case of assumption (A2'), or perhaps under the stronger assumption (A2). In this more general case, the equilibrium measure $\mu_\V$ has support $I=\operatorname{supp}(\mu_\V)$, which is the union $I = I_1\cup \dots \cup I_m$ of $m$ disjoint arcs, each with nonempty interior. We introduce the set $\Sr_1(I)$ of probability measures $\mu$ on $\mathbb T$ with 
\begin{align} \label{support}
\operatorname{supp}(\mu) = J \cup E ,
\end{align}
where $J\subset I$ and $E=E(\mu)$ a finite or countable subset of $\mathbb{T}\setminus I$.  

Given a polynomial $\V$, we define 
\begin{align} \label{defRorW}
{\mathcal R_L }(\alpha_{[L-1]}) :=  \tr\!\ \V\left(\mathcal G_L (\alpha)\right) - \tr\!\ \V\left(\mathcal G_L (\alpha^\V)\right) - \sum_{j=0}^L \log \left( \frac{1- |\alpha_j|^2}{1-|\alpha_j^\V|^2}\right) .
\end{align}
%
{\fab Our main results are the two following theorems.}

\begin{theorem}[Abstract gem] \label{Unitabstractgem}
Suppose that $\V$ is a symmetric Laurent polynomial satisfying assumption (A2'). Then{\fab ,} for {\fab any} probability measure $\mu$ on $\T$ with infinite support and $\alpha$ the sequence of its $V$-coefficients, we have 
 \begin{align}
\label{Unitgengem}
\sup_{L\geq 1} {\mathcal R_L }(\alpha_{[L-1]}) < \infty 
\end{align}
if and only if the following conditions are satisfied:
\begin{enumerate}
\item 
  $\mu \in \mathcal S_1(I)$, 
\item when $\ddd\mu(z) = w(z) \ddd\mu_\V(z)+ \ddd\mu_s(z)$ 
is the Lebesgue decomposition with respect to $\mu_\V$, 
\begin{align*}
\int \log w(z) \ddd\mu_\V(z) >-\infty\,,
\end{align*}
\item 
 $\displaystyle \sum_{\lambda \in E(\mu)} \mathcal{F}_\V(\lambda)<\infty$ .
\end{enumerate}
\end{theorem}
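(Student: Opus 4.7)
The plan is to identify $\sup_L \mathcal R_L(\alpha_{[L-1]})$ with the rate function of a large deviation principle (LDP) for the random $V$-coefficients under $\PP^{(n)}_\V$, and to invoke the corresponding LDP on the spectral measure side, whose good rate function $\mathcal I_{\meas}$ decomposes into an entropy-like contribution on $\supp\mu_\V$ and an outlier contribution driven by the effective potential $\mathcal F_\V$. The finiteness of $\mathcal I_{\meas}(\mu)$ is precisely the conjunction of conditions (1), (2), (3), so the abstract gem will follow once both rate functions are identified via the homeomorphism $\psi$ of \eqref{defphimapping}.

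First, applying Proposition \ref{propclue}, I would expand
\[ \tr\!\ \V(\mathcal G_L(\alpha)) = F_-(\alpha_{[d-1]}) + F_+(\alpha_{[L-d,L-1]}) + \sum_{j=0}^{L-1-d} G(\alpha_{[j,j+d]}), \]
and similarly for $\alpha^\V$. Since $F_\pm$ are polynomials in $V$-coefficients lying in $\overline{\mathbb D}$, both the $F_-$ and $F_+$ contributions are uniformly bounded in $L$. Consequently, $\sup_L \mathcal R_L(\alpha_{[L-1]})$ is finite if and only if
\[ \sup_L \left[ \sum_{j=0}^{L-1-d} \bigl(G(\alpha_{[j,j+d]}) - G(\alpha^\V_{[j,j+d]})\bigr) - \sum_{j=0}^L \log \frac{1-|\alpha_j|^2}{1-|\alpha_j^\V|^2} \right] < \infty. \]
The virtue of this reformulation is that the bracketed quantity matches, up to the affine perturbation $\tfrac{k+2}{n}\log(1-|\alpha_k|^2)$, the exponent of the coefficient density \eqref{krishnaT}.

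Next, I would invoke the measure-side LDP of Section \ref{sec:LDP}, whose good rate function $\mathcal I_{\meas}(\mu) = \mathcal E_\V(\mu) - \mathcal E_\V(\mu_\V)$ admits, by the potential-theoretic decomposition of $\mathbb T$ as $I$ versus $\mathbb T\setminus I$, the explicit expression
\[ \mathcal I_{\meas}(\mu) = \mathcal K(\mu_\V|\mu) + \sum_{\lambda \in E(\mu)} \mathcal F_\V(\lambda) \]
when $\mu \in \mathcal S_1(I)$, and $+\infty$ otherwise. From this formula, $\mathcal I_{\meas}(\mu) < \infty$ is manifestly equivalent to (1)--(3). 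On the coefficient side, the homeomorphism $\psi$ transfers this LDP to $\mathcal R$ with rate function $\mathcal I_{\coeff} = \mathcal I_{\meas}\circ \psi^{-1}$, and a Varadhan-type analysis of \eqref{krishnaT}, combined with Proposition \ref{propclue} and the uniform bound on $F_\pm$, should identify the projective-limit form
\[ \mathcal I_{\coeff}(\alpha) = \sup_{L\geq 1} \mathcal R_L(\alpha_{[L-1]}), \]
which closes the loop.

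The principal difficulty, as I see it, is twofold. First, justifying the projective-limit form $\sup_L$ (as opposed to $\liminf_L$ or $\limsup_L$) requires checking exponential tightness of the coefficient-LDP in the product topology on $\mathcal R$ and verifying the appropriate lower semicontinuity of the maps $\alpha \mapsto \mathcal R_L(\alpha_{[L-1]})$ in $L$; the non-trivial affine weight $1 - \tfrac{k+2}{n}$ in \eqref{krishnaT} must be handled with care so that the $\sup_L$ emerges cleanly in the large-$n$ limit. Second, condition (3)---the summability of $\mathcal F_\V$ over outliers $\lambda \in E(\mu)$---must be faithfully captured on the coefficient side: atoms of $\mu$ outside $I$ must be traced, through $\psi$, to the asymptotic behavior of $\alpha$, and the effective potential $\mathcal F_\V$ must arise naturally from the coefficient-side asymptotics. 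This step essentially ports the outlier analysis of \cite{gamboanag2021} from the real line to the unit circle, and I expect it to be the main technical engine of the proof.
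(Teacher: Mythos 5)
Your high-level strategy is exactly the one the paper uses: an LDP on the spectral-measure side with rate $\mathcal I_{\meas}$, an LDP on the coefficient side obtained through the homeomorphism $\psi$ of \eqref{defphimapping}, a projective-limit argument to express $\mathcal I_{\coeff}(\alpha)$ as a supremum over finite truncations, and a Varadhan-type reading of the density \eqref{krishnaT} together with Proposition \ref{propclue} to compare the level-$L$ rate function with a polynomial expression in the coefficients. The identification of finiteness of $\mathcal I_{\meas}(\mu)$ with conditions (1)--(3) is also as in the paper. Up to this point the proposal is on target.

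There is, however, a genuine inaccuracy at the key step: the identity
\[
\mathcal I_{\coeff}(\alpha) = \sup_{L\geq 1} \mathcal R_L(\alpha_{[L-1]})
\]
is \emph{false} in general, and the paper never claims it. Indeed, if this identity held, then the sum rule of Theorem~\ref{Unitabstractnewsumrule} would be valid for \emph{all} measures $\mu\in\mathcal S_1(I)$ without the hypothesis $\mu\in M_\V$, and Remark~\ref{rem:counterexample} produces an explicit counterexample to that. What the projective-limit argument (via the Dawson--G\"artner theorem, \cite[Theorem 4.6.1]{demboz98}) actually yields is
\[
\mathcal I_{\coeff}(\alpha) = \sup_{L\geq 1} \mathcal I_{\coeff}^{L}(\alpha_{[L-1]}),
\]
where $\mathcal I_{\coeff}^{L}$ is the level-$L$ contraction of $\mathcal I_{\coeff}$, and $\mathcal I_{\coeff}^{L}$ is \emph{not} equal to $\mathcal R_L$ or $\mathcal W_L$. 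The paper's Theorem~\ref{BSZmod1} supplies the missing quantitative link: a bound
\[
\bigl|\mathcal I_{\coeff}^L(x) - \mathcal W_L(x)\bigr| \leq m^\V_L(x_{[L-d,L-1]}) = C_\V \sum_{L-d\leq k\leq L-1}|x_k-\alpha_k^\V|,
\]
which is uniformly bounded by $2dC_\V$ since the V-coefficients lie in $\overline{\D}$. Combined with the bound $|\mathcal R_L - \mathcal W_L| = |F_+^0|\leq C$ from your observation about $F_\pm$, one gets that $\sup_L\mathcal R_L(\alpha_{[L-1]})$ and $\mathcal I_{\coeff}(\alpha)$ are finite or infinite \emph{simultaneously}, but they need not coincide. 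For the gem only this finiteness equivalence is needed, so the theorem still follows; but the precise comparison lemma (Theorem~\ref{BSZmod1}) is indispensable and is not something a generic ``Varadhan-type analysis'' will produce without dedicated work. You should replace the claimed identity by the two-sided bounded comparison and prove the analogue of Theorem~\ref{BSZmod1}, which is the real technical engine.

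One further small caution: the term $\ell_0(\alpha_{[L-1]}) = \sum_{k=0}^{L-2}\tfrac{k+2}{n}\log\tfrac{1-|\alpha_k|^2}{1-|\alpha_k^\V|^2}$ coming from the affine weight in \eqref{krishnaT} is $O(1/n)$ only after restricting to balls bounded away from $\partial\D^L$; you flag the affine weight but should make this restriction explicit when estimating small-ball probabilities.
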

{\fab A natural problem is to determine sufficient conditions on the measure in order that the limit of 
\(\mathcal{R}_L(\alpha_{[L-1]})\) as \(L \to \infty\) exists so that a sum rule can be established.}
%
%
{\fab In the next theorem, we  give a partial answer to this question and get a sum rule under a condition on the V-coefficients.}
For $\mu_\V$ an equilibrium measure with V-coefficients $\alpha^\V$, we let 
\begin{align}\label{defMclass}
M_\V = \left\{\mu \in \mathcal M_1(\T): \lim_{k\to \infty} |\alpha_k-\alpha^\V_k| = 0 \right\} . 
\end{align}
\nomenclature{$M_\V$}{$\{\mu \in \mathcal M_1(\T): \lim_{k\to \infty} \NomBar\alpha_k-\alpha^\V_k\NomBar = 0\}$}

\begin{theorem}
\label{Unitabstractnewsumrule}
Let $\V$ be a symmetric Laurent polynomial satisfying assumption (A2') and $\mu \in \mathcal S_1(I)$ a probability measure with infinite support. If $\mu \in M_\V$, then the limit $\lim_{L\to \infty} \mathcal R_L( \alpha_{[L-1]})\in (-\infty,\infty]$ exists and  
\begin{align*}
\mathcal K(\mu_\V \mid \mu) + \sum_{x \in E(\mu)} \mathcal F_\V(x) = 
 \lim_{L\to \infty} \mathcal R_L( \alpha_{[L-1]}) . 
\end{align*}
\end{theorem}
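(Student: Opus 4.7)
The plan is to adapt the proof of the analogous Theorem \ref{fullssr} from \cite{breuersizei2018}, which addresses the full-support case, to the more delicate proper-arc setting. The central tool remains the decomposition from Proposition \ref{propclue}.

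First, I would substitute the decomposition \eqref{227} into the definition \eqref{defRorW} of $\mathcal R_L$ to obtain
\[
\mathcal R_L(\alpha_{[L-1]}) = A + B_L + C_L,
\]
where $A := F_-(\alpha_{[d-1]}) - F_-(\alpha^\V_{[d-1]})$ is a fixed constant depending only on the first $d$ V-coefficients, $B_L := F_+(\alpha_{[L-d,L-1]}) - F_+(\alpha^\V_{[L-d,L-1]})$ is a right-boundary term, and
\[
C_L := \sum_{j=0}^{L-1-d} \bigl[G(\alpha_{[j,j+d]}) - G(\alpha^\V_{[j,j+d]})\bigr] - \sum_{j=0}^{L} \log\frac{1-|\alpha_j|^2}{1-|\alpha_j^\V|^2}
\]
is the bulk contribution. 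The hypothesis $\mu \in M_\V$ yields $|\alpha_k - \alpha_k^\V|\to 0$, and since $F_+$ is a polynomial, continuity forces $B_L \to 0$. Thus the existence and value of $\lim_L \mathcal R_L$ reduce to the corresponding questions for $C_L$.

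Next, I would split into cases according to whether $T := \mathcal K(\mu_\V|\mu) + \sum_{x\in E(\mu)} \mathcal F_\V(x)$ is finite. If $T<\infty$, Theorem \ref{Unitabstractgem} provides $\sup_L \mathcal R_L<\infty$, but this does not yet yield a true limit or identify its value. I would appeal to the large deviation principles to be stated in Section \ref{sec:LDP}: the LDP on the spectral-measure side identifies $T$ as the rate function $\mathcal I_{\meas}(\mu)$, while the LDP on the V-coefficient side, derived from the density \eqref{krishnaT} together with the same decomposition of Proposition \ref{propclue}, yields the rate function $\mathcal I_{\coeff}(\alpha) = A + \lim_L C_L$ on the domain where the limit exists. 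Uniqueness of good rate functions then forces $A+\lim_L C_L = T$, and combining with $B_L\to 0$ gives the claimed identity. If instead $T = \infty$, Theorem \ref{Unitabstractgem} guarantees $\sup_L \mathcal R_L = \infty$, but one must upgrade this to $\lim_L \mathcal R_L = \infty$. I would argue by contradiction: if some subsequence $\mathcal R_{L_k}$ remained bounded, then since $A$ is fixed and $B_{L_k}\to 0$, $C_{L_k}$ would also be bounded, and lower semicontinuity of the coefficient-side rate function together with $\mathcal I_{\meas}=\mathcal I_{\coeff}$ would imply $T \leq \liminf_k \mathcal R_{L_k} < \infty$, contradicting $T = \infty$.

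The main obstacle lies in the finite-value case: one must verify that the coefficient-side rate function coincides with $A + \lim_L C_L$ rather than merely with $\liminf_L C_L$, which is exactly why the assumption $\mu\in M_\V$ is crucial—it forces the boundary term $B_L$ to vanish, so that every accumulation point of $\mathcal R_L$ must equal the common value furnished by uniqueness of the rate function. This is precisely the obstruction mentioned in Remark \ref{rem:counterexample} and the reason one passes to a specific Aleksandrov-class representative in the symmetric one-cut Theorem \ref{Unitnewsumrulesymmetric}. A secondary subtlety is that the effective potential $\mathcal F_\V$ may degenerate at the boundary of $\supp(\mu_\V)$, so care is needed when showing that $\sum_{x\in E(\mu)}\mathcal F_\V(x)$ truly controls the outlier contribution appearing in $\mathcal I_{\meas}$.
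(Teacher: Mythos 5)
Your decomposition $\mathcal R_L = A + B_L + C_L$ is precisely the one the paper uses (with $A + C_L$ essentially equal to $\mathcal W_L$ in \eqref{defWrate} and $B_L$ being the right boundary term $F_+^0$), and you correctly observe that $\mu\in M_\V$ forces $B_L\to 0$. The gap is in the pivotal assertion that ``the LDP on the V-coefficient side \dots yields the rate function $\mathcal I_{\coeff}(\alpha) = A + \lim_L C_L$.'' You invoke this as if it were available, but identifying the coefficient-side rate with the explicit expression $\lim_L \mathcal W_L$ is the content that must be established, and your proposal gives no mechanism for it. Appealing to ``uniqueness of good rate functions'' doesn't help, because the rate function you get from the contraction principle is the abstract pullback $\mathcal I_{\coeff} = \mathcal I_{\meas}\circ\psi^{-1}$ --- not a priori equal to $A + \lim_L C_L$.

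The paper supplies this missing link in two steps that your proposal omits. First, the Dawson--G\"artner theorem applied to the projective system of truncated coefficient vectors gives $\mathcal I_{\coeff}(\alpha) = \lim_L \mathcal I_{\coeff}^L(\alpha_{[L-1]})$, where $\mathcal I_{\coeff}^L$ is the (abstract) rate function for $\alpha^{(n)}_{[L-1]}$. Second, and most crucially, Theorem \ref{BSZmod1} produces the uniform proxy bound $|\mathcal I_{\coeff}^L(x) - \mathcal W_L(x)| \le m^\V_L(x_{[L-d,L-1]}) = C_\V \sum_{L-d\le k\le L-1}|x_k - \alpha_k^\V|$. Under $\mu\in M_\V$ this error vanishes, so $\lim_L \mathcal W_L(\alpha_{[L-1]}) = \mathcal I_{\meas}(\mu)$, and then $B_L\to 0$ gives $\lim_L \mathcal R_L = \mathcal I_{\meas}(\mu)$. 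Note in particular that $M_\V$ enters twice: you only credit it with killing the boundary term $B_L$, but its more essential role is to kill the mixed-term error $m^\V_L$, which is what makes the truncated rate functions converge to the explicit proxy. Once this is in place your case-splitting into $T<\infty$ and $T=\infty$ is unnecessary --- the sandwich $\mathcal I_{\coeff}(\alpha) - m^\V_L \le \mathcal W_L(\alpha_{[L-1]}) \le \mathcal I_{\coeff}(\alpha) + m^\V_L$ handles both cases uniformly.
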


In the one-cut case, we can observe a connection between the convergence assumption on the coefficients and finding a specific representative in a class of Aleksandrov-Clark measures {\fab(see \cite{simonopuc1} p.35)}. 
For $\mu$ a measure with V-coefficients $\alpha_k, k\geq 0$ and $\lambda \in \T$, let $\mu_\lambda$ be the measure with coefficients $\lambda \alpha_k$. Then{\fab ,} the set 
\begin{align}\label{aleksandrov}
\{\mu_\lambda: \lambda \in \T\}
\end{align}
is the set of Aleksandrov measures associated to the Carath\'{e}odory function $F$ of $\mu$, 
\begin{align}
F(z) = \int \frac{\zeta+z}{\zeta-z}\ddd \mu(\zeta) , 
\end{align} 
see \cite{simonopuc1}, Section 1.3 and Section 3.2. More precisely, the Carath\'{e}odory function $F_\lambda$ of $\mu_\lambda$ is related to $F$ via
\begin{align*}
F_\lambda(z) = \frac{(1-\lambda)+(1+\lambda)F(z)}{(1+\lambda)+(1-\lambda)F(z)} . 
\end{align*}
As proven in \cite[Theorem 3.2.16]{simonopuc1}, all measures $\mu_\lambda$ have the same essential support.

\begin{corollary}
\label{Unitnewsumrulesymmetric}
Let $\V$ be a symmetric Laurent polynomial satisfying assumption (A2) and $\mu \in \mathcal S_1(I)$ a symmetric probability measure with infinite support. Then{\fab ,} for $\lambda=1$ or for $\lambda=-1$, the limit $\lim_{L\to \infty} \mathcal R_L(\lambda \alpha_{[L-1]})\in (-\infty,\infty]$ exists and 
\begin{align} \label{corollarysumrule}
\mathcal K(\mu_\V \mid \mu_\lambda) + \sum_{x \in E(\mu_\lambda)} \mathcal F_\V(x) = 
 \lim_{L\to \infty} {\mathcal R_L}(\lambda\alpha_{[L-1]}) .
\end{align}
\end{corollary}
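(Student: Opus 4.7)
The plan is to apply Theorem~\ref{Unitabstractnewsumrule} to the Aleksandrov partner $\mu_\lambda$ of $\mu$, whose V-coefficients are $\lambda \alpha_k$. Since Theorem~\ref{Unitabstractnewsumrule} provides the sum rule for any $\nu \in \mathcal S_1(I) \cap M_\V$, and since \cite[Theorem 3.2.16]{simonopuc1} guarantees that all measures in the Aleksandrov family share the same essential support (so that $\mu_\lambda \in \mathcal S_1(I)$ and the decomposition \eqref{support} with $E(\mu_\lambda)$ still makes sense), the whole question reduces to showing that under the symmetry hypotheses at least one of the two choices $\lambda = 1$ or $\lambda = -1$ places $\mu_\lambda$ in the class $M_\V$.

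First I would reduce the problem to real V-coefficients. Because $\V$ is symmetric, the energy functional $\mathcal E_\V$ defined in \eqref{4.70} is invariant under conjugation $z \mapsto \bar z$, so by uniqueness of the minimizer, $\mu_\V$ is itself symmetric; under (A2) the support $\hat{a}$ is then a symmetric arc of $\T$. A symmetric measure has real moments, and the Szeg\H{o} recursion \eqref{recOPUC} inductively produces real Verblunsky coefficients; hence both $\alpha^\V_k$ and $\alpha_k$ belong to $(-1,1)$. Consequently, the membership condition $|\lambda \alpha_k - \alpha^\V_k| \to 0$ required in order to invoke Theorem~\ref{Unitabstractnewsumrule} on $\mu_\lambda$ reduces, for $\lambda \in \{1,-1\}$, to the real-valued dichotomy
\[
\alpha_k - \alpha^\V_k \;\longrightarrow\; 0 \qquad \text{or} \qquad \alpha_k + \alpha^\V_k \;\longrightarrow\; 0 .
\]

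The second step, which is the crucial one, is to establish this dichotomy. Here I would invoke the asymptotic theory of V-coefficients for measures essentially supported on a single arc of $\T$ (Rakhmanov/Denisov/Khrushchev-type results adapted to the arc setting): since both $(\alpha_k)$ and $(\alpha^\V_k)$ arise from measures in $\mathcal S_1(I)$, they must share the same asymptotic profile up to a multiplicative phase coming from the Aleksandrov parametrisation, and this phase, when restricted to real sequences, can only be $+1$ or $-1$. This is where the one-cut assumption (A2) enters essentially: in a multi-cut regime the two-element family $\{+1,-1\}$ would not suffice to cover all possible asymptotic scenarios, which is precisely why the corollary is stated under (A2) rather than under the weaker (A2'). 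Once the correct sign $\lambda$ is selected, Theorem~\ref{Unitabstractnewsumrule} applied to $\mu_\lambda$ yields both the existence of $\lim_{L\to\infty} \mathcal R_L(\lambda\alpha_{[L-1]}) \in (-\infty,\infty]$ and the identity \eqref{corollarysumrule}.

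The main obstacle is precisely the dichotomy in the second step. It rests on a careful analysis of OPUC asymptotics on arcs and on exploiting jointly the one-cut hypothesis (A2) and the real-symmetric structure induced by the symmetry of $\V$ and $\mu$; everything else is a direct application of the abstract sum rule already proved in Theorem~\ref{Unitabstractnewsumrule}.
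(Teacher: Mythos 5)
Your high-level strategy matches the paper's: reduce to showing $\mu_\lambda \in M_\V$ for one of $\lambda = \pm 1$ and then apply Theorem~\ref{Unitabstractnewsumrule}, with the one-cut Rakhmanov theorem (specifically \cite[Theorem 13.4.4]{simonopuc2}, quoted as Theorem~\ref{Rakhm}(2) in the paper) providing the sign dichotomy for the limits of the real Verblunsky coefficients. Your reduction to real coefficients via the symmetry of $\V$ and $\mu$ is also exactly what the paper uses.

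There is, however, a genuine gap. You assert that ``since both $(\alpha_k)$ and $(\alpha^\V_k)$ arise from measures in $\mathcal S_1(I)$, they must share the same asymptotic profile.'' Membership in $\mathcal S_1(I)$ constrains only the support; it does not give the hypothesis of the arc Rakhmanov theorem, which requires the a.c.\ part of $\mu$ to have \emph{positive density a.e.\ on the arc} $\Gamma_{\mathbf a,\eta}$. Without this, the limit $\lim_n |\alpha_n| = \mathbf a$ need not exist (for instance, a purely atomic $\mu \in \mathcal S_1(I)$ provides a counterexample), and neither $\mu$ nor $\mu_{-1}$ need lie in $M_\V$. The positivity of the density is a consequence of $\mathcal K(\mu_\V \mid \mu) < \infty$, not of $\mu \in \mathcal S_1(I)$. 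Consequently your route as written only works when $\mathcal I_{\meas}(\mu) < \infty$, and you must treat the complementary case $\mathcal I_{\meas}(\mu) = \infty$ separately. The paper does exactly this: when $\mathcal I_{\meas}(\mu) = \infty$ one does \emph{not} try to place $\mu_\lambda$ in $M_\V$; instead, one takes $\lambda = 1$ and uses \eqref{ratecomparison2} together with the uniform bound on \eqref{RWcomparison} to conclude that $\mathcal W_L(\alpha_{[L-1]})$, hence $\mathcal R_L(\alpha_{[L-1]})$, diverges to $+\infty$, so both sides of \eqref{corollarysumrule} equal $+\infty$. Only in the finite case does one invoke Theorem~\ref{Rakhm}(2) — whose hypothesis is now available — to get $\gamma := \lim \alpha_n$ and $\gamma^\V := \lim \alpha^\V_n$ with $\gamma^2 = \mathbf a = (\gamma^\V)^2$, and hence the dichotomy $\gamma = \pm \gamma^\V$ that places $\mu$ or $\mu_{-1}$ in $M_\V$. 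You should also be explicit that it is the uniqueness of the minimizer of $\mathcal E_\V$ together with conjugation-invariance that forces $\mu_\V$ symmetric, and that Theorems 11.2.4 and 16.1.5 of \cite{pastur_shcherbina} supply the positivity of the density of $\mu_\V$ on the arc needed to apply Rakhmanov to $\alpha^\V$ as well.
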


%

\begin{remark}
\label{rem:counterexample}
In general, it is not possible to extend the identity in Theorem \ref{Unitabstractnewsumrule} to all measures $\mu\in \mathcal S_1(I)$. We can give a counterexample already in the one-cut case as in Corollary \ref{Unitnewsumrulesymmetric}, for the details see Section \ref{app:remark}. For this we consider as reference measure the Gross-Witten measure $\GW_\g$, the equilibrium measure corresponding to the potential $\V(z) =  \g \mathrm{Re}(z)$. For $\g > 1$ (gapped case), the Gross-Witten measure is supported by an arc $[\pi-\theta_\g, \pi+\theta_\g]$, with $\theta_{g}  \in (0, \pi)$, see Section \ref{susec:GW}. 
The V-coefficients $\alpha_n^\g$ of $\GW_\g$ are explicitly known and satisfy 
\begin{align}
\label{limangle}
\lim_{n\to \infty} \alpha_n^\g = - \sqrt{1- |\g|^{-1}}=: \mathfrak a .
\end{align} 
For this reference measure, let us suppose the identity of Theorem \ref{Unitabstractnewsumrule} holds for all symmetric $\mu\in \mathcal{S}_1(I)$ and write it as 
\begin{align} \label{suspectedidentity}
\mathrm{LHS}(\mu) = \mathrm{RHS}(\mu). 
\end{align}
We compare the two sides for two particular measures $\mu$ with constant V-coefficients, related to the Geronimus polynomials. Specifically, following \cite{gamboanag2017}, we consider the Hua-Pickrell distribution $\GE_{\mathfrak a}$, defined as the symmetric measure with constant V-coefficients $\alpha_n( \GE_{\mathfrak a})=\mathfrak a$. 
Then{\fab ,} $\GE_{\mathfrak a}$ has the same support as $\GW_\g$ and $\GE_{\mathfrak a} \in M_\V$ (see \eqref{eq:geronimusmeasure}, setting $-\gamma=\mathfrak a$). By Corollary \ref{Unitnewsumrulesymmetric},
\begin{align*}
\mathrm{LHS}(\GE_{\mathfrak a}) = \mathrm{RHS}(\GE_{\mathfrak a}) .
\end{align*}
One may then calculate that 
\begin{align*}
\mathrm{LHS}(\GE_{-\mathfrak a}) - \mathrm{LHS}(\GE_{\mathfrak a}) = \log \left(\frac{1-\mathfrak a}{1+\mathfrak a}\right) + \frac{1}{2}\left(\frac{\sqrt{|\mathfrak a|}}{1+\mathfrak a}- \log \frac{1+\sqrt{|\mathfrak a|}}{\sqrt{1+\mathfrak a}}\right). 
\end{align*}
On the other hand, $\mathrm{RHS}(\GE_{-\mathfrak a}) - \mathrm{RHS}(\GE_{\mathfrak a})$ is a polynomial in $\alpha_k^\V$ and $\mathfrak a$. Therefore, identity \eqref{suspectedidentity} cannot be true for $\mu = \GE_{-\mathfrak a}$. This argument also shows that there is no polynomial modification of the right hand side, such that \eqref{suspectedidentity} holds for all $\mu \in \mathcal S_1(I)$.    
\end{remark}
\begin{remark}
\label{remconjecture}
Corollary \ref{Unitnewsumrulesymmetric} also raises the question whether the statement may holds for some nonsymmetric measures. 
One way to answer the question would be  to 
consider Aleksandrov class of a  symmetric probability measure and to show that we can find a $\lambda\in \T$, such that $\mu_\lambda\in M_\V$,
(so Theorem \ref{Unitabstractnewsumrule} can be applied). However, such a $\lambda$ might not exist.  As an example, for  $\V=0$ when $\mu_\V=\UNIF$ with full support, Simon in \cite[Theorem 12.6.2]{simonopuc2} gives examples of i.i.d. V-coefficients, such that the measure $\mu$ has a pure point spectrum dense on $\mathbb T$ and  $\alpha_k$ does not converge to 0. In this case, $\mu \notin M_0$, and in fact $\mu_\lambda \notin M_0$ for any $\lambda \in \mathbb T$.    Therefore, the more promising approach to extend Corollary \ref{Unitnewsumrulesymmetric} would be to show that for $\mu\in \mathcal S_1(I)$ such that the left hand side of \eqref{corollarysumrule} is finite, we  can find a $\lambda\in \T$, such that $\mu_\lambda\in M_\V$. 
\end{remark}

\begin{remark}
For probability measures on the real line, a similar phenomenon can be observed: in \cite{gamboanag2021}, a general abstract gem for polynomial potentials was proven and a sum rule in the one-cut case. This sum rule holds without additional assumptions on the coefficients. For reference measures in the multi-cut case, the sum rule does not hold in general.  It seems surprising that the one-cut case on the unit circle needs additional constraints. With the Szeg\H{o} mapping, there is a straighforward way to map a symmetric probability measure $\mu$ on the circle to a measure $\nu$ on $[-2,2]$, with an explicit relation between their recursion coefficients (the Geronimus relation). As explored in \cite{gamboanag2023}, this allows to derive sum rules on the unit circle from those on the real line and vice-versa. It would be possible to formulate a sum rule for symmetric measures on the circle in the one-cut case without an additional constraint, but then the right hand side is parametrized in terms of the recursion coefficients of the measure $\nu$, and not in terms of the V-coefficients of $\mu$. 
\end{remark}
\section{One-parameter family of potentials - Ungapped case}
\label{sec:one-par-ungapped}

We now revisit the examples of Section \ref{sec:basicexamples}, introducing a multiplicative parameter $\g\in \mathbb R$ in front of their potential $\V$ and study the family of potentials $\V_\g = \g\V$.
The parameter $\g$ acts as  an inverse temperature and  measures the strength of the coupling.
For low values of $|\g|$ (strong coupling)  the model is ungapped, with $\mu_\g=\mu_{\V_\g}$ supported by $\T$, for other values it is one-cut or multi-cut. We begin by stating the sum rule in the ungapped case, as it follows from the result of Breuer, Simon, and Zeitouni \cite{breuersizei2018}. In Section \ref{sec:one-par-gapped}, we present new sum rules in the one-cut or multi-cut situation.    

The right hand side is easily modified by multiplying the potential by $\g$ and we have sum rules of the form \eqref{231-2}: 
\begin{align*}
\mathcal K(\mu_\g | \mu) = 
\lim_{L\to\infty} \left( \g\, \tr\!\ \V\left(\mathcal G_L (\alpha)\right) - \g\, \tr\!\ \V\left(\mathcal G_L (\alpha^\V)\right) - \sum_{j=0}^L \log \left( \frac{1- |\alpha_j|^2}{1-|\alpha_j^\V|^2}\right) \right) .
\end{align*}
Additionally, we may simplify the right hand side by evaluating the contribution of $\alpha^\V$. Since the V-coefficients of the $\g$-equilibrium measure are in general unknown, we use $\mu=\UNIF$ as a test measure. For this measures with V-coefficients all zero (with a possible contribution of $\alpha_{-1}=-1$) and in the examples in this section $\mathcal K (\mu_\g|\UNIF)<\infty$. This leads to  
\begin{align}\label{231-3}
\mathcal K(\mu_\g | \mu) = \mathcal K(\mu_\g | \UNIF) + 
\lim_{L\to\infty} \left( \g\, \tr\!\ \V\left(\mathcal G_L (\alpha)\right) - \g\, \tr\!\ \V\left(\mathcal G_L (0)\right) - \sum_{j=0}^L \log \left( 1- |\alpha_j|^2\right) \right) .
\end{align}
In this setting, we now present several sum rules for the one-parameter family of potentials, using the source potentials $\V^{1,0},\; \V^{1,1},\; \V^{2,0}$. We begin by giving in the next table the potentials and the corresponding full support density of the equilibrium measures $\mu_{g}^{m,n}$. Here, we take $0\leq\g<1$.
\begin{table}[htbp]
\centering
\begin{tabular}{@{}lll@{}}
\toprule
\( (m, n) \) & \( \mathcal{V}^{m,n}_\g \)
& \( \rho_\g^{m,n}
\) \\
\midrule
\( (1,0) \) & \( \g \cos\theta\) & \( \frac{1}{2\pi}(1-\g\cos\theta)\)\\ 
\\[-0.8em]
\( (1,1) \) & 
\(
\begin{aligned}
&\dfrac{\g}{2}\cos 2\theta
\end{aligned}
\) & \( \frac{1}{2\pi}(1-\g\cos2\theta)\)\\
\\[-0.8em]
\( (2,0) \) & 
\(
\begin{aligned}
&\dfrac{4\g}{3}\cos\theta-\dfrac{\g}{6}\cos 2\theta 
\end{aligned}
\) & 
\(
\begin{aligned}
& \dfrac{1}{2\pi}\left(1-\dfrac{4\g}{3}\cos\theta + \dfrac{\g}{3} \cos 2\theta\right) 
\end{aligned}
\) \\
\bottomrule
\\
\end{tabular}
\caption{Expressions for \( \mathcal{V}^{m,n}_\g \) and the corresponding equilibrium densities}
\label{tab:oneparameter}
\end{table}
\\
Recall that the family generated in the $(1,0)$ case is also known as the Gross-Witten ensemble of parameter $\g$ ($\GW_\g$).
Define,
$$
H(\g) =  \mathcal K(\mu_{g}^{1,0}| \UNIF)
= 1- \sqrt{1- \g^2} + \log \frac{1 + \sqrt{1-\g^2}}{2}.
$$
Define further $K(g)$ in (\ref{522'}) and $I(\g)$ in  (\ref{Ig}) with $\alpha$ in (\ref{defalpha}) and $a$ in (\ref{defha}).
We then have the following sum rules.
\begin{theorem}
\label{th:ungap}
For $\g\in [0,1)$, $(m,n)=(1,0),\;(1,1)\;(2,0)$  and $\mu\in \mathcal M_1(\T)$ nontrivial,
\begin{align}
\mathcal K(
\mu_{g}^{m,n} |  \mu)  < \infty \  \Leftrightarrow \ 
\alpha \in \ell^2 .
\label{gem2ungap}
\end{align}
More precisely, we have 
\begin{equation}
\label{srungap}
\mathcal K(\mu_{g}^{m,n} | \mu)=A_g^{m,n}.
\end{equation}
Where
\begin{eqnarray*}
A_g^{1,0}&=& H(\g)  -\frac{\g }{2} + \frac{\g }{2} \sum_{k=0}^\infty|\alpha_k - \alpha_{k-1}|^2  
+ \sum_{k=0}^\infty \left(- \log (1 - |\alpha_k|^2)  - \g|\alpha_k|^2\right),\\
A_g^{1,1} &= & H(\g) -\frac{3}{4}\g+2\sqrt{1-\g^2}-2 \notag \\
& &\quad +\sum_{k=0}^\infty\left( -\log(1-|\alpha_k|^2) 
- \g  |\alpha_k|^2 -\frac{ \g }{2}|\alpha_k|^4\right)\notag \\
 & &\quad +  \g \sum_{k=0}^\infty |\alpha_k\alpha_{k-1}|^2 + \frac{ \g }{2}\sum_{k=0}^\infty (1-|\alpha_k|^2) |\alpha_{k+1}-\alpha_{k-1}|^2
\\
\notag
& &\quad +\frac{ \g }{8} \sum_{k=0}^\infty \left(\left(2|\alpha_k|^2 - |\alpha_k - \alpha_{k-1}|^2\right)^2 + \left(2|\alpha_{k-1}|^2 - |\alpha_k - \alpha_{k-1}|^2\right)^2\right),\\
A_g^{2,0} &= & \frac{19\g }{12} + K(\g) - \g I(\g) \\
\notag
& &
+\sum_{k=0}^\infty\left( -\log(1-|\alpha_k|^2) -\g |\alpha_k|^2 -\frac{\g}{2}|\alpha_k|^4\right)\\
 \notag
& &+\frac{\g}{6}\sum_{k=0}^\infty \left(|\alpha_k|^2 - |\alpha_{k-1}|^2\right)^2\\
 \notag
 & &+\frac{\g}{6}\sum_{k=0}^\infty (1- |\alpha_k|^2) |\alpha_{k+1}-2\alpha_k + \alpha_{k-1}|^2\\
 & &+ \frac{\g}{12}\sum_{k=0}^\infty\left(6 |\alpha_k|^2 + 
 6 |\alpha_{k-1}|^2-|\alpha_k -\alpha_{k-1}|^2\right)|\alpha_k -\alpha_{k-1}|^2.
\end{eqnarray*}
\end{theorem}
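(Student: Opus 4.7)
The plan is to apply the Breuer--Simon--Zeitouni sum rule in its equivalent form \eqref{231-3}, with the test measure $\UNIF$. In the ungapped regime $\g\in[0,1)$, each equilibrium measure $\mu^{m,n}_\g$ has full support on $\mathbb T$ (cf.\ Table \ref{tab:oneparameter}), so Theorem \ref{fullssr} applies; the relative entropy $\mathcal K(\mu^{m,n}_\g \mid \UNIF)$ is finite and, together with the $\alpha_{-1}=-1$ boundary contributions, it accounts for the constants $H(\g)$, $K(\g)$, $I(\g)$ via trigonometric integrals (details in the appendix). Moreover, since the V-coefficients of $\UNIF$ vanish, the matrix $\mathcal G_L(0)$ is a plain shift and $\tr \mathcal G_L(0)=\tr\mathcal G_L(0)^2=0$, so the term $\g\tr\V^{m,n}(\mathcal G_L(0))$ in \eqref{231-3} drops out. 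Identity \eqref{231-3} thus reduces to
\[ \mathcal K(\mu^{m,n}_\g \mid \mu) = \mathcal K(\mu^{m,n}_\g \mid \UNIF) + \lim_{L\to\infty} \Bigl( \g\tr\V^{m,n}(\mathcal G_L(\alpha)) - \sum_{j=0}^L \log(1-|\alpha_j|^2) \Bigr), \]
and the problem becomes one of explicit computation.

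The main work is to expand $\g\tr\V^{m,n}(\mathcal G_L(\alpha))$ in the V-coefficients. Since each $\V^{m,n}$ is a symmetric Laurent polynomial of degree at most two, it suffices to compute $\tr\mathcal G_L$ and $\tr\mathcal G_L^2$. Reading the diagonal of \eqref{GGTm} directly gives $\tr\mathcal G_L(\alpha) = -\sum_{k=0}^{L-1}\alpha_{k-1}\bar\alpha_k$, and combined with the identity $-2\mathrm{Re}(\alpha_{k-1}\bar\alpha_k) = |\alpha_k-\alpha_{k-1}|^2 - |\alpha_k|^2 - |\alpha_{k-1}|^2$ this directly yields $A^{1,0}_\g$, with the constant $-\g/2$ arising from the $k=0$ boundary term $\alpha_{-1}=-1$. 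For the $(1,1)$ and $(2,0)$ cases I would expand $(\mathcal G_L^2)_{k,k}=\sum_\ell(\mathcal G_L)_{k\ell}(\mathcal G_L)_{\ell k}$ using the upper Hessenberg structure of $\mathcal G_L$, which ensures only finitely many $\ell$ contribute at each diagonal entry; the resulting polynomial in $\alpha_k,\bar\alpha_k,\rho_k$ is then rearranged using $\rho_k^2=1-|\alpha_k|^2$ into the sums-of-squares form displayed in $A^{1,1}_\g$ and $A^{2,0}_\g$.

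The gem \eqref{gem2ungap} then follows from the structure of $A^{m,n}_\g$. The key bound is
\[ -\log(1-|\alpha_k|^2) - \g|\alpha_k|^2 = (1-\g)|\alpha_k|^2 + \sum_{n\geq 2}\frac{|\alpha_k|^{2n}}{n} \geq (1-\g)|\alpha_k|^2 \geq 0 \]
for $\g<1$, while the remaining series in each $A^{m,n}_\g$ are either sums of nonnegative squares or are bounded above by a constant multiple of $\sum(|\alpha_k|^2 + |\alpha_{k-1}|^2 + |\alpha_{k+1}|^2)$; hence finiteness of $A^{m,n}_\g$ is equivalent to $\alpha\in\ell^2$. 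The principal obstacle is the lengthy but mechanical expansion of $\tr\mathcal G_L^2$ for the $(1,1)$ and $(2,0)$ potentials and the careful bookkeeping of the boundary terms from $\alpha_{-1}=-1$, which combined with the entropy computations produce the constants $-\frac{3}{4}\g + 2\sqrt{1-\g^2} - 2$ and $\frac{19\g}{12}+K(\g)-\g I(\g)$; these calculations I would defer to the appendix.
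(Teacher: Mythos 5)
Your proposal is mathematically sound, but it takes a genuinely different route from the paper. You apply the Breuer--Simon--Zeitouni sum rule \eqref{231-3} directly with the scaled potential $\g\V^{m,n}$, expand the traces $\tr\mathcal G_L$ and $\tr\mathcal G_L^2$ by hand, and rearrange into sums of squares. The paper instead exploits the observation (valid for $\g\in[0,1]$) that the equilibrium measure interpolates linearly in $\g$,
\begin{align*}
\mu_\g^{m,n} = (1-\g)\,\UNIF + \g\,\mu_1^{m,n},
\end{align*}
and invokes the additivity of Kullback--Leibler divergence along convex combinations of \emph{reference} measures (Proposition 8.2 of \cite{gamboanag2023}), which yields
\begin{align*}
\mathcal K(\mu_\g^{m,n}\mid\mu) = (1-\g)\,\mathcal K(\UNIF\mid\mu) + \g\,\mathcal K(\mu_1^{m,n}\mid\mu) - (1-\g)\,\mathcal K(\UNIF\mid\mu_\g^{m,n}) - \g\,\mathcal K(\mu_1^{m,n}\mid\mu_\g^{m,n}).
\end{align*}
This reduces the problem to the two known endpoint sum rules --- Szeg\H{o}--Verblunsky at $\g=0$ and the $\g=1$ sum rules \eqref{sr11}, \eqref{sr20} of \cite{du2023} --- plus explicit evaluation of a handful of cross-entropies, without ever re-expanding $\tr\V(\mathcal G_L)$ in the interpolated potential. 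For the $(1,0)$ case the paper does not even redo this and simply cites Simon \cite[Thm.~2.8.1]{simonopuc1} and \cite[Cor.~5.4]{gamboanag2017}. Each route has its merits: yours is self-contained and makes the appearance of the rate function transparent, while the paper's interpolation argument sidesteps the heaviest algebra and lets the constants $K(\g)$, $I(\g)$ emerge naturally as entropy integrals such as $\mathcal K(\UNIF\mid\mu_\g^{m,n})$ and $\mathcal K(\mu_1^{m,n}\mid\mu_\g^{m,n})$. Your derivation of the gem is also a legitimate alternative: the paper argues that finiteness of $\mathcal K(\mu_\g^{m,n}\mid\mu)$ is equivalent to finiteness of both $\mathcal K(\UNIF\mid\mu)$ and $\mathcal K(\mu_1^{m,n}\mid\mu)$, with the former forcing $\alpha\in\ell^2$ and the latter implied by it via \eqref{gem10}/\eqref{gem11}/\eqref{gem20}, whereas you argue directly from the termwise positivity and boundedness of the series in $A_\g^{m,n}$ when $\g<1$. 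Both are correct. The one caveat in your plan is practical: to match the stated constants in $A_\g^{1,1}$ and $A_\g^{2,0}$ you will have to verify the identities relating $\mathcal K(\mu_\g^{m,n}\mid\UNIF)$ (which arises naturally in your route) to the combinations $H(\g)-\tfrac{3}{4}\g+2\sqrt{1-\g^2}-2$ and $\tfrac{19\g}{12}+K(\g)-\g I(\g)$ appearing in the theorem, which involve the entropies $\mathcal K(\UNIF\mid\mu_\g^{m,n})$ and $\mathcal K(\mu_1^{m,n}\mid\mu_\g^{m,n})$ that fall out for free in the convexity argument but not in yours; this bookkeeping should be carried out explicitly.
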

\noindent
The proofs of these sum rules are postponed to Appendix \ref{append:ungaped}.
%
%

%
\section{One-parameter family of potentials - Gapped phase}
\label{sec:one-par-gapped}

\subsection{The (1,0) case: Gross-Witten ensemble}
\label{susec:GW}

We return to the
Gross-Witten ensemble, now in the gapped or weakly coupled phase $|\g| > 1$. 
The expression of the equilibrium measure $\GW_\g$ is different according to the sign of $\g$:
\begin{align}
\label{GWeq}
\rho_\g^{1,0}(e^{i\theta}) = \begin{cases}
\displaystyle 
\frac{|\g|}{\pi}\cos(\tfrac{\theta}{2})\!\  \sqrt{\sin^2(\tfrac{\theta_\g}{2})-\sin^2(\tfrac{\theta}{2})}
\mathbbm{1}_{[-\theta_\g, \theta_\g]}(\theta)  &\hbox{if} \ \g < -1\,,\\
\displaystyle \frac{\g}{\pi}\sin(\tfrac{\theta}{2})\!\  \sqrt{\sin^2(\tfrac{\theta}{2})-\cos^2(\tfrac{\theta_\g}{2})}\mathbbm{1}_{ [\pi-\theta_\g, \pi+\theta_\g]}(\theta) 
  &\hbox{if}\ \g > 1\,,
\end{cases}
\end{align}
where $\theta_{g}  \in [0, \pi]$ is such that 
\begin{equation}\label{eqthetag}
\sin^2 (\tfrac{\theta_\g}{2}) = |\g|^{-1}\,.
\end{equation}
We will treat only the case $\g >1$, and in this case the support is 
\begin{align} \label{circlearc}
\{z=e^{i \theta} \in \mathbb{T}\!\ |\, \theta \in [\pi-\theta_\g, \pi +\theta_\g]\}
\end{align}
 with $\theta_\g$ as in \eqref{eqthetag}. The function $\mathcal F_\V$ as defined in \eqref{defeffectivepot} is given by  
\begin{align*}
\mathcal F_{GW_\g}(e^{i\theta}) =  
 2|\g|\displaystyle\int_{I(\theta)}\sin(\varphi/2)\sqrt{ \cos^2 (\theta_\g/2)  - \sin^2(\varphi/2)}\ d\varphi \,,
 \end{align*}
 where 
 \begin{align*}
 I(\theta) = \begin{cases}[\theta, \pi-\theta_\g] & \hbox{if} \ \theta \in (0, \pi-\theta_\g)\,,\\
 [\pi + \theta_\g, \theta]& \hbox{if} \ \theta \in (\pi+\theta_g , 2\pi)\,.
\end{cases}
\end{align*}

From \cite[formula (7.22)]{Zhedanov1998}, we know that the V-coefficients of $\GW_\g$ are
\begin{align}\label{alphalimGW+}
\alpha_{n-1}^\g 
= 1 - \frac{2}{1+q}\frac{1 - q^{n+2}}{1 - q^{n+1}}\,,
\end{align}
where $q = \left(\sqrt{\g} - \sqrt{\g  -1}\right)^2$. Since $0 < q<1$, we have
\begin{align}
\label{limangle1}
\lim_{n\to \infty} \alpha_n^\g = 1 - \frac{2}{1+q}  = - \sqrt{1- \g^{-1}} = -\cos(\theta_\g/2) =: \mathfrak a \,.
\end{align}

Theorem \ref{Unitabstractgem} implies the following new abstract gem.

\begin{theorem}
\label{newGW}
 Let $\g> 1$ and let $\mu\in \mathcal{M}_1(\mathbb{T})$ be nontrivial. Then{\fab ,}
\begin{equation}
\label{newnewGW} 
\sup_{L\geq 1} \sum_{k=0}^L\left(-\g \mathrm{Re} (\alpha_k\bar\alpha_{k-1} - \alpha_k^\g\bar\alpha_{k-1}^\g)  \
- \log \frac{1- |\alpha_k|^2}{1-|\alpha_k^\g|^2}\right) <\infty
\end{equation}
if and only if $\mu \in \mathcal{S}^{\mathbb{T}}_1(\pi-\theta_\g,\pi+\theta_\g)$ and
\begin{align*}
\mathcal K(\GW(\g) \mid  \mu) + \sum_{\lambda\in E(\mu)} \mathcal F_{\GW(\g)}(\lambda) < \infty .
\end{align*} 
\end{theorem}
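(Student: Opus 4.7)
The plan is to apply Theorem~\ref{Unitabstractgem} to the Gross-Witten potential $\V_\g(z) = \frac{\g}{2}(z+z^{-1})$, which is a symmetric Laurent polynomial of degree $d=1$ (one takes $v_1=\g$ in \eqref{even}). For $\g>1$, the explicit formula \eqref{GWeq} shows that $\mu_{\V_\g}=\GW_\g$ has support equal to the single arc $I=\{e^{i\theta}:\theta\in[\pi-\theta_\g,\pi+\theta_\g]\}$, so assumption (A2) (and a fortiori (A2')) holds. The explicit expression of $\mathcal{F}_{\GW_\g}$ stated just below \eqref{circlearc} is strictly positive on $\mathbb{T}\setminus I$ and vanishes at $e^{i(\pi\pm\theta_\g)}$, verifying assumption (A3).

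Next we identify the functional $\mathcal{R}_L$ from \eqref{defRorW}. Interpreting $\V_\g(\mathcal{G}_L)$ as $\tfrac{\g}{2}(\mathcal{G}_L+\mathcal{G}_L^*)$ (the convention underlying Proposition~\ref{propclue}), we have $\tr\,\V_\g(\mathcal{G}_L(\alpha)) = \g\,\mathrm{Re}(\tr\,\mathcal{G}_L(\alpha))$. Reading off the diagonal of \eqref{GGTm}, whose entries are $\bar\alpha_0,-\alpha_0\bar\alpha_1,\ldots,-\alpha_{L-2}\bar\alpha_{L-1}$, and using the convention $\alpha_{-1}=-1$, we obtain
\begin{equation*}
\tr\,\V_\g(\mathcal{G}_L(\alpha)) = \g\,\mathrm{Re}(\alpha_0) - \g\sum_{k=1}^{L-1}\mathrm{Re}(\alpha_k\bar\alpha_{k-1}) = -\g\sum_{k=0}^{L-1}\mathrm{Re}(\alpha_k\bar\alpha_{k-1}).
\end{equation*}
Substituting into \eqref{defRorW} together with the analogous expression for $\alpha^\g$ yields
\begin{equation*}
\mathcal{R}_L(\alpha_{[L-1]}) = \sum_{k=0}^{L-1}\Bigl(-\g\,\mathrm{Re}(\alpha_k\bar\alpha_{k-1}-\alpha_k^\g\bar\alpha_{k-1}^\g)\Bigr) - \sum_{j=0}^{L}\log\frac{1-|\alpha_j|^2}{1-|\alpha_j^\g|^2},
\end{equation*}
which differs from the expression inside the supremum of \eqref{newnewGW} by a single term that is uniformly bounded by $2\g$ (since $|\alpha_k|\leq 1$). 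Hence the two supremum conditions are equivalent.

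Finally, Theorem~\ref{Unitabstractgem} characterizes finiteness of $\sup_L \mathcal{R}_L$ by the three conditions (i) $\mu\in\mathcal{S}_1(I)$, (ii) $\int\log w\,d\GW_\g > -\infty$ and (iii) $\sum_{\lambda\in E(\mu)}\mathcal{F}_{\GW_\g}(\lambda)<\infty$. Condition (ii) is equivalent to $\mathcal{K}(\GW_\g\mid\mu)<\infty$: when $w$ is $\GW_\g$-a.e.\ strictly positive one has $\mathcal{K}(\GW_\g\mid\mu)=-\int\log w\,d\GW_\g$, while otherwise both quantities equal $+\infty$. Since $\mathcal{K}(\GW_\g\mid\mu)$ and the outlier sum are both nonnegative, their individual finiteness is equivalent to the finiteness of their sum, which gives the stated characterization. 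The main obstacle is the bookkeeping in Step~2, namely expressing $\tr\,\V_\g(\mathcal{G}_L(\alpha))$ correctly in terms of $\mathrm{Re}(\alpha_k\bar\alpha_{k-1})$ with the right boundary conventions; once this is done the result follows as an immediate specialization of the abstract gem.
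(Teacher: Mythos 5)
Your proof is correct and follows exactly the route the paper intends: Theorem~\ref{newGW} is stated in the text as a direct corollary of Theorem~\ref{Unitabstractgem} applied to the Gross--Witten potential, and you supply the computation that the paper omits — identifying $\tr\,\V_\g(\mathcal G_L(\alpha))$ via the diagonal of \eqref{GGTm} and the convention $\alpha_{-1}=-1$ (matching \eqref{eq:potentialcomputation10}), observing that the resulting $\mathcal R_L$ differs from the sum in \eqref{newnewGW} only by a single term uniformly bounded by $2\g$, and translating condition~(2) of the abstract gem into finiteness of $\mathcal K(\GW_\g\mid\mu)$. No gap; the verification of (A2)/(A3) is slightly more explicit than the paper, which merely invokes the symmetric Laurent polynomial structure.
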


As a consequence, we can formulate a proper gem in the following proposition.

\begin{proposition}
\label{necandsuff}
Let $\mathfrak {a} = -\sqrt{1- \g^{-1}}$ (see \eqref{limangle}).
The supremum in (\ref{newnewGW}) is finite if and only if
 the two following conditions are satisfied:
\begin{align}
\label{C1}
\sum_{k=0}^\infty |\alpha_{k+1} - \alpha_k|^2 < \infty\\
\label{C2}
\sum_{k=0}^\infty (|\alpha_k| - \mathfrak{a})^2 < \infty\,.
\end{align}
\end{proposition}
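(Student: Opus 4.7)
The plan is to reduce the sum in \eqref{newnewGW} to two manifestly nonnegative series that match the conditions \eqref{C1} and \eqref{C2}. The key input is the polarization identity
\[
-\g\,\mathrm{Re}(\alpha_k\bar{\alpha}_{k-1}) = -\tfrac{\g}{2}\bigl(|\alpha_k|^2 + |\alpha_{k-1}|^2\bigr) + \tfrac{\g}{2}|\alpha_k - \alpha_{k-1}|^2.
\]
Summing for $k=0,\dots,L$ and telescoping the pure-modulus terms using $\alpha_{-1} = -1$, one obtains
\[
\sum_{k=0}^{L} \bigl[-\g\,\mathrm{Re}(\alpha_k\bar{\alpha}_{k-1}) - \log(1-|\alpha_k|^2)\bigr]
= \sum_{k=0}^{L} h(|\alpha_k|) + \tfrac{\g}{2}\sum_{k=0}^{L} |\alpha_k - \alpha_{k-1}|^2 + \tfrac{\g}{2}|\alpha_L|^2 - \tfrac{\g}{2},
\]
where $h(r) := -\g r^2 - \log(1-r^2)$. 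Applying the same rewriting to the reference sequence $(\alpha_k^\g)$ and subtracting, the boundary contributions are uniformly bounded in $L$. Moreover, \eqref{alphalimGW+} gives $\alpha_k^\g - \mathfrak a = O(q^k)$ with $q = (\sqrt\g - \sqrt{\g-1})^2 \in (0,1)$, hence both $\sum_k \bigl| h(|\alpha_k^\g|) - h(|\mathfrak a|) \bigr|$ and $\sum_k |\alpha_k^\g - \alpha_{k-1}^\g|^2$ are finite. Consequently, the sum appearing in \eqref{newnewGW} can be rewritten as
\[
S_L = \sum_{k=0}^{L} H(|\alpha_k|) + \tfrac{\g}{2}\sum_{k=0}^{L} |\alpha_k - \alpha_{k-1}|^2 + O(1), \qquad H := h - h(|\mathfrak a|).
\]

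Next I would analyze $h$ on $[0,1)$: a direct computation yields $h'(r) = 2r(\g r^2 - \g + 1)/(1-r^2)$ with $h'(|\mathfrak a|) = 0$, $h''(|\mathfrak a|) = 4\g^2 \mathfrak a^2 > 0$, and $h(r) \to +\infty$ as $r \to 1^-$. Therefore $r = |\mathfrak a| = \sqrt{1 - \g^{-1}}$ is the unique interior minimum of $h$ on $[0,1)$, $H \geq 0$ with $H(|\mathfrak a|) = 0$, and $H(r) \asymp (r - |\mathfrak a|)^2$ uniformly on each compact $K \subset [0,1)$ containing $|\mathfrak a|$.

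Since both sums in the expression for $S_L$ are nonnegative, $\sup_L S_L < \infty$ is equivalent to the simultaneous convergence of both series. The second series gives \eqref{C1}: the $k=0$ term $|\alpha_0 + 1|^2 \leq 4$ is harmless, so $\sum_k |\alpha_k - \alpha_{k-1}|^2 < \infty$ iff $\sum_k |\alpha_{k+1}-\alpha_k|^2 < \infty$. For the first series, $\sum_k H(|\alpha_k|) < \infty$ forces $|\alpha_k| \to |\mathfrak a|$, and the quadratic comparison then yields \eqref{C2}; conversely, \eqref{C2} forces $|\alpha_k|$ to lie eventually in a compact neighborhood of $|\mathfrak a|$ where the reverse comparison gives the summability of $H(|\alpha_k|)$ (the finitely many initial terms are finite because $\mu$ has infinite support, so $|\alpha_k| < 1$).

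The main technical step is the initial algebraic decomposition combined with the use of \eqref{alphalimGW+} to absorb the reference contributions into $O(1)$; from there the argument reduces to a routine convex-function comparison near the unique interior minimizer of $h$.
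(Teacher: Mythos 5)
Your proof is correct and follows the same broad strategy as the paper's --- polarization identity, subtraction of the reference sequence, quadratic comparison near $|\mathfrak{a}|$ --- but handles the logarithmic part differently. The paper subtracts $\mathfrak{a}^2$ from the squared moduli and then Taylor-expands $-\log\tfrac{1-|\alpha_k|^2}{1-\mathfrak{a}^2}$ about $|\alpha_k|^2=\mathfrak{a}^2$, keeping an $O((|\alpha_k|^2-\mathfrak{a}^2)^3)$ remainder; absorbing that remainder implicitly requires $|\alpha_k|$ to eventually stay in a neighborhood of $|\mathfrak{a}|$, which is not established before the expansion is invoked. You instead telescope the modulus terms to isolate $h(r)=-\g r^2-\log(1-r^2)$ one coefficient at a time, dispose of the (summable, by \eqref{alphalimGW+} and the geometric decay of $\alpha_k^\g-\mathfrak{a}$) reference contributions, and analyze $h$ globally: $h'(r)=2r(\g r^2-\g+1)/(1-r^2)$ has a unique interior zero at $r=|\mathfrak{a}|$ with $h''(|\mathfrak{a}|)=4\g^2\mathfrak{a}^2>0$ and $h(r)\to\infty$ as $r\to1^-$, so $H:=h-h(|\mathfrak{a}|)\geq 0$ with $H(r)\asymp(r-|\mathfrak{a}|)^2$ on compacts containing $|\mathfrak{a}|$. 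Since both remaining series have nonnegative terms, boundedness of the partial sums is equivalent to convergence, and the finiteness of $\sum_k H(|\alpha_k|)$ forces $|\alpha_k|\to|\mathfrak{a}|$ before the quadratic comparison is ever needed. This removes the implicit a priori convergence hidden in the paper's Taylor step, so your version is, to my reading, the cleaner of the two; otherwise the structure is the same.

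One remark on the statement rather than your proof: since $\mathfrak{a}=-\sqrt{1-\g^{-1}}<0$ while $|\alpha_k|\geq 0$, the quantity $\sum_k(|\alpha_k|-\mathfrak{a})^2$ as literally written in \eqref{C2} always diverges. Both your argument and the paper's in fact produce $\sum_k(|\alpha_k|-|\mathfrak{a}|)^2<\infty$, so \eqref{C2} should be read with $|\mathfrak{a}|$ in place of $\mathfrak{a}$.
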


\noindent\textbf{Proof of Proposition \ref{necandsuff}:}
Let us reduce the problem of convergence of the series to the study of some simpler series.
We have  for $x,y \in \C$
\[2 \mathrm{Re}(x\bar y) = |x|^2+ |y|^2 - |x-y|^2\]
so that 
\begin{align*}
2\mathrm{Re} (\alpha_k\bar\alpha_{k-1} - \alpha_k^\g\bar\alpha_{k-1}^\g) =
(|\alpha_k|^2 -\mathfrak{a}^2)+ (|\alpha_{k-1}|^2-\mathfrak{a}^2) - |\alpha_{k-1} - \alpha_k|^2 + R_k^\g
\end{align*}
where
\begin{align*}
R_k^\g =  |\alpha_{k-1}^\g - \alpha_k^\g|^2- (|\alpha_k^\g|^2 -\mathfrak{a}^2)- (|\alpha_{k-1}^\g|^2-\mathfrak{a}^2)
\end{align*}
is the general term of a convergent series.
We  also have
\begin{align*}
\log \frac{1- |\alpha_k|^2}{1-|\alpha_k^\g|^2} = \log \frac{1- |\alpha_k|^2}{1-\mathfrak{a}^2} - \log \frac{1- |\alpha_k^\g|^2}{1-\mathfrak{a}^2}\,,
\end{align*}
and the last term is the general term of a convergent series. 
Thus, \eqref{newnewGW} holds if and only if 
\begin{align*}
S_L = \sum_{k=0}^L \frac{\g}{2} |\alpha_{k-1} - \alpha_k|^2- \g (|\alpha_k|^2 -\mathfrak{a}^2)  
-\log \frac{1- |\alpha_k|^2}{1-\mathfrak{a}^2} 
\end{align*}
remains bounded. 
Since 
\begin{align*}
\frac{1- |\alpha_k|^2}{1-\mathfrak{a}^2}  &= 1 - \frac{|\alpha_k|^2 - \mathfrak{a}^2}{1-\mathfrak{a}^2}, 
\end{align*}
we have the expansion
\begin{align*}
-\log \frac{1- |\alpha_k|^2}{1-\mathfrak{a}^2}  =   \frac{|\alpha_k|^2 - \mathfrak{a}^2}{1-\mathfrak{a}^2} + 
\frac{1}{2} \frac{(|\alpha_k|^2 - \mathfrak{a}^2)^2}{(1-\mathfrak{a}^2)^2} +
O\left((|\alpha_k|^2 - \mathfrak{a}^2)^3\right) .
\end{align*}
Using $(1-\mathfrak{a}^2)^{-1} = \g$, we see that
\begin{align*}
 S_L &= 
\label{decOmp}
  \sum_{k=0}^{L} 
  \frac{\g}{2}  |\alpha_{k-1} - \alpha_k|^2
+\frac{\g^2}{2} (|\alpha_k|^2 -\mathfrak{a}^2)^2 +
O\left((|\alpha_k|^2 - \mathfrak{a}^2)^3\right) .
\end{align*} 
The double inequality
\begin{align*}
|\mathfrak{a}|^2 
 \leq\frac{ (|\alpha_k|^2 - \mathfrak{a}^2)^2} { \left(|\alpha_k|-\mathfrak{a}\right)^2}
\leq (1 + |\mathfrak{a}|^2) 
\end{align*}
entails $S_L$ is bounded if and only if  
\begin{align*}
\tilde S_L = \sum_{k=0}^L \g |\alpha_{k-1} - \alpha_k|^2 +  \g^2(|\alpha_k| - \mathfrak{a})^2  
\end{align*}
is bounded. Since $\g>1$, this is clearly equivalent to \eqref{C1} and \eqref{C2}
\qed

\begin{remark}
During the proof of the above proposition we obtained as an intermediate step the conditions
\begin{align*}
\sum_{k=0}^\infty |\alpha_{k+1} - \alpha_k|^2 < \infty, \qquad 
\sum_{k=0}^\infty (|\alpha_k|^2 - \mathfrak{a}^2)^2 < \infty
\end{align*}
and then simplified the second one to \eqref{C2}. In the above formulation, we may set $\g = 1$, so that $\mathfrak{a}=0$ and the second condition becomes $\alpha \in \ell^4$, which is consistent with \eqref{gem10}.
\end{remark}

\subsection{The (1,1) gapped case } 

We consider the potential 
\[-\g\V_{1,1}(e^{i \theta}) = -\frac{\g}{2}\cos 2\theta \] 
for $\g > 1$, where we find a two-cut situation. 
According to \cite[page 22-23]{mandal1990}, the support of the equilibrium measure $\mu_\g^{1,1}$ is 
\begin{align*}
\{ e^{i \theta} \in \mathbb{T}\!\ |\, \theta \in [-\alpha_c, \alpha_c]\cup [\pi -\alpha_c, \pi +\alpha_c] \} , 
\end{align*}
where $ \alpha_c \in (0, \pi/2)$ is the solution of
\begin{align*}
\sin \alpha_c = \g^{-1} \,.
\end{align*}
The density is then{\fab ,}
\begin{align*}
\rho_\g^{1,1}(e^{i\theta}) = \frac{\g}{\pi} |\cos\theta|
\sqrt{\sin^2 \alpha_c -\sin^2 \theta}
\,.
\end{align*}
Notice that when $\g\downarrow 1$, then $\alpha_c \uparrow \pi/2$ and  we recover the density $\rho_{1}^{1,1}$.
\paragraph{$\g< -1$} Since the potential is invariant by the simultaneous change $\g \mapsto - \g\  ,\ \theta \mapsto \frac{\pi}{2} + \theta$,
 we find the support 
$[\gamma_c, \pi- \gamma_c]\cup [\pi+\gamma_c, 2\pi-\gamma_c]$ where $\gamma_c \in [0, \pi/2]$ is solution of
\begin{align*}
\cos \gamma_c =  |\g|^{-1}\,.
\end{align*}
The density is given by
\begin{align*}
\rho^{1,1}_\g(e^{i\theta}) =  \frac{|\g|}{\pi} |\sin\theta|
\sqrt{\cos^2 \gamma_c -\cos^2 \theta}\,.
\end{align*}
Note that when $\g \uparrow -1$, then $\gamma_c\downarrow 0$ and we recover the density 
$\rho^{1,1}_{-1}$.

We may apply Theorem \ref{Unitabstractgem} in this case and obtain that 
\begin{align*}
\sup_{L\geq 1} \mathcal R_L(\alpha_{[L-1]}) < \infty
\end{align*}
if and only if conditions (1), (2) and (3) in Theorem \ref{Unitabstractgem} are satisfied, with reference measure $\mu_\g^{1,1}$ and with potential $\V^{1,1}_\g$.  
The function $\mathcal R_L(\alpha_{[L-1]})$ can be written using \eqref{eq:potentialcomputation11}. Since the formulation is then straightforward, we omit it here. 

One may ask for an equivalent condition for finiteness of $\sup_L\mathcal R_L(\alpha_{[L-1]})$, formulated in terms of $\ell^p$-conditions as in Proposition \ref{necandsuff}. However, it is not obvious whether such an equivalent condition exists. As in the higher order Szeg\H{o} theorems of \cite{du2023}, such an equivalence might only hold under some {\it a priori} condition on the coefficients.   
To investigate these questions using a method similar to that of Proposition \ref{necandsuff}, one intermediate step would be the derivation of asymptotics of the V-coefficients of $\mu_\g^{1,1}$.

\subsection{The (2,0) gapped case}
\medskip

Let us now turn to the potential $\V_\g^{2,0}$ as in (\ref{eq:gpotential20}) 
in the gapped case, i.e. when $\g \in (-\infty,-3/5) \cup (1, \infty)$. The support of the equilibrium measure is then a single arc. The following lemma  is a consequence of computations made in \cite{oota2022} (see also \cite{mandal1990}, \cite{periwal1990}, \cite{oota2022}, \cite{houart1990}). The method is also exposed in \cite{pritsker2005}.

\begin{lemma}
\label{gapg}
\begin{enumerate}
\item When $\g > 1$,  the measure $\mu_\g^{2,0}$  is supported by $\{ e^{i \theta} \in \mathbb{T}\!\ |\, \theta \in [\pi-\theta_c, \pi+ \theta_c]\}$ where $\theta_c$ satisfies
\begin{align*}
 1 - \g^{-1} =\cos^4 (\theta_c/2) \,, 
\end{align*}
and its density is
\begin{align*}
\rho^{2,0}_\g(e^{i\theta}) = \frac{2\g}{3\pi}\sin(\theta/2) \left(1+\cos^2(\theta_c/2) - \cos\theta\right)\sqrt{\sin^2(\theta/2) - \cos^2(\theta_c/2)}\,.
\end{align*}
\item When $\g< -3/5$, the measure $\mu_\g^{2,0}$  is supported by $\{ e^{i \theta} \in \mathbb{T}\!\ |\, \theta \in [-\theta_c, \theta_c]\}$  where $\theta_c$ satisfies
\begin{align*}
\frac{5}{3} +\g^{-1} = \frac{8}{3}\cos^2(\theta_c /2) - \cos^4 (\theta_c/2)\,, 
\end{align*}
and its density  is
\begin{align*}
\rho^{2,0}_\g(e^{i\theta}) =\frac{2|\g|}{3\pi}\cos(\theta/2)  \left(2 + \sin^2(\theta_c/2) - \cos\theta\right)
\sqrt{\sin^2(\theta_c/2) - \sin^2(\theta/2)}\,.
\end{align*}
\end{enumerate}
\end{lemma}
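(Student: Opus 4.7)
The plan is to verify that in each case the stated density satisfies the Euler--Lagrange characterization \eqref{ELT} of the equilibrium measure. Since $\mu_\V$ is the unique minimizer of $\mathcal E_\V$, it is enough to check: (i) that the density is nonnegative on the proposed support, (ii) that it integrates to $1$, (iii) that the effective potential $\mathcal J_\V$ is constant on the support, and (iv) that $\mathcal J_\V \geq 2\xi_\V$ on the complementary arc. The relation defining $\theta_c$ should appear naturally as the solvability condition of the constrained problem (total mass one).

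Concretely, I would work at the level of the Cauchy transform (resolvent) $G(z)=\int (z-\zeta)^{-1}\,\ddd\mu_\V(\zeta)$. For a symmetric Laurent polynomial potential of degree $2$ and a single-arc support with endpoints $e^{\pm i\theta_c}$ (case 2) or $-e^{\pm i\theta_c}$ (case 1, by the rotation $\theta\mapsto \pi+\theta$), standard potential theory on the circle tells us that $G$ admits an analytic continuation with a square-root branch cut on the arc:
\[
G(z) \;=\; P(z)\,+\, Q(z)\,\sqrt{(z-e^{i\theta_c})(z-e^{-i\theta_c})},
\]
with $P,Q$ Laurent polynomials determined by three conditions: the jump of $2\,\mathrm{Re}\, G$ across the arc matches $v'$ (this is the differentiated Euler--Lagrange relation $v'(\theta)=2\,\mathrm{PV}\!\int\cot\bigl(\tfrac{\theta-\varphi}{2}\bigr)\,\ddd\mu_\V(\varphi)$ with $v(\theta)=\V_\g^{2,0}(e^{i\theta})$), the normalization $G(z)\sim 1/z$ at infinity, and regularity at $z=0$. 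Plugging the explicit form of $v'(\theta)=\g\bigl(-\tfrac{4}{3}\sin\theta+\tfrac{1}{3}\sin 2\theta\bigr)$ into these conditions yields a linear system whose unique solution is the rational $P,Q$ corresponding to the densities in the statement, together with the stated algebraic equation for $\cos(\theta_c/2)$. The boundary values of $2\,\mathrm{Im}\,G$ on the arc then recover, up to the factor $2\pi$, the densities $\rho_\g^{2,0}$ asserted in (1) and (2).

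Having produced the candidate density this way, I would verify nonnegativity by a direct change of variables: $\phi=\theta-\pi$, $u=\sin(\phi/2)$ in case (1), and $u=\sin(\theta/2)$ in case (2), which reduces the positivity check to the obvious inequality $\sin^2(\theta_c/2)-u^2\geq 0$ on the relevant interval, and the mass-one condition to a one-variable polynomial integral against $\sqrt{\sin^2(\theta_c/2)-u^2}$; the result is exactly the defining relation for $\theta_c$. The off-support control (iv) is then obtained by writing $\mathcal J_\V(e^{i\theta})-2\xi_\V$ as an antiderivative with integrand proportional to the square-root of the reversed polynomial $\sin^2(\theta/2)-\sin^2(\theta_c/2)$ on the gap, which has a definite sign; concretely, integrating $\frac{\ddd}{\ddd\theta}\mathcal J_\V$ from an endpoint of the support across the gap gives a nonnegative expression, yielding $\mathcal J_\V\geq 2\xi_\V$ and, in addition, assumption (A3) (the minimum on the complement is attained only at the endpoints).

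The main obstacle I expect is bookkeeping rather than ideas. In case (2) the support is symmetric about $z=1$ while the coefficient $\tfrac{4}{3}\cos\theta$ in $\V$ is not odd under $\theta\mapsto \pi+\theta$, so the simple rotation that trivializes case (1) does not carry over; the principal-value integrals and the normalization computation are genuinely longer, and the condition $\tfrac{5}{3}+\g^{-1}=\tfrac{8}{3}\cos^2(\theta_c/2)-\cos^4(\theta_c/2)$ comes out only after careful cancellation. A cleaner alternative is to rely on the resolvent computations already carried out in \cite{oota2022,mandal1990,periwal1990,houart1990} and only verify (i), (ii), and (iv) as consistency checks, which is essentially the route indicated by the citation preceding the lemma.
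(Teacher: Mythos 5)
The paper does not actually prove this lemma: it states it as ``a consequence of computations made in \cite{oota2022}'' (and cites \cite{mandal1990}, \cite{periwal1990}, \cite{houart1990}, with the method ``exposed in \cite{pritsker2005}''), so there is no internal argument to compare against. You correctly recognize this in your closing paragraph, and your alternative---solve the one-cut Riemann--Hilbert problem for the Cauchy transform $G(z)=P(z)+Q(z)\sqrt{(z-e^{i\theta_c})(z-e^{-i\theta_c})}$ using the jump condition matching $v'$, normalization at infinity, and regularity at $z=0$, then read off the density from $2\,\mathrm{Im}\,G$, fix $\theta_c$ from the mass-one condition, and check nonnegativity and the off-support Euler--Lagrange inequality---is precisely the standard resolvent computation those references carry out. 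This approach is sound, and the consistency checks at $\g=1$ and $\g=-3/5$ (where $\theta_c\uparrow\pi$ and the formulas degenerate to the ungapped densities $\tfrac{1}{3\pi}(1-\cos\theta)^2$ and $\rho^{2,0}_{-3/5}$ respectively) confirm the stated formulas, as the paper itself notes after the lemma.

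One small inaccuracy worth flagging: you write that a rotation $\theta\mapsto\pi+\theta$ ``trivializes'' case (1) and that this fails for case (2). The rotation merely recenters the arc at $z=1$ so that the branch cut has the clean form $(z-e^{i\theta_c})(z-e^{-i\theta_c})$; it does not trivialize the problem. Under $\theta\mapsto\pi+\theta$ one has $\cos\theta\mapsto-\cos\theta$ while $\cos 2\theta$ is invariant, so the rotated potential $-\tfrac{\g}{6}\cos2\theta-\tfrac{4\g}{3}\cos\theta$ is not $\V_{\tilde\g}^{2,0}$ for any $\tilde\g$ (unlike the $(1,1)$ case, where $\theta\mapsto\theta+\pi/2$ does intertwine $\g\leftrightarrow-\g$). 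Both cases (1) and (2) therefore require an independent resolvent computation; they are not related by symmetry, which is why Lemma \ref{gapg} states two distinct defining equations for $\theta_c$. This does not affect the soundness of your plan, only the stated rationale for why case (2) is ``longer.''
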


Notice that if $\g \downarrow -3/5$ (resp. $\g \uparrow 1$), then $\theta_c \uparrow \pi$ and we recover $\mu^{2,0}_{-3/5}$ (resp. $\mu^{2,0}_{1}$).

As in the gapped $(1,1)$ case, we may apply Theorem \ref{Unitabstractgem} and obtain an abstract gem with reference measure $\mu_\g^{2,0}$.   
The computation of the function $\mathcal R_L(\alpha_{[L-1]})$ can be made using \eqref{eq:potentialcomputation10} and \eqref{eq:potentialcomputation11}. 

Also in this case, finding $\ell^p$-conditions on the coefficients which are equivalent to $\sup_L\mathcal R_L(\alpha_{[L-1]})<\infty$ is not an easy task. In order to proceed as in Proposition \ref{necandsuff}, one needs to find the asymptotics of the V-coefficients of $\mu_\g^{1,1}$ first, possibly making use of the strategy of \cite{Zhedanov1998}.

\section{Large deviations}
\label{sec:LDP}

In order to be self-contained, we recall the definition of a large deviation principle. Let $(X_n)_n$ be a sequence of random variables with values in some Polish space $\mathcal{X}$ with Borel $\sigma$-algebra. Let $\mathcal{I}:\mathcal{X} \to [0,\infty]$ and $(a_n)_n$ be a sequence of positive real numbers with $a_n\to \infty$. We say that $(X_n)_n$ satisfies the large deviation principle with speed $a_n$ and rate function $\mathcal{I}$, if $\mathcal{I}$ is lower semicontinuous and 
\begin{itemize}
\item[(1)] for all $C\subset \mathcal{X}$ closed
\begin{align*}
\limsup_{n\to \infty} \frac{1}{a_n} \log \mathbb{P}(X_n \in C) \leq - \inf_{x\in C} \mathcal{I}(x) ,
\end{align*} 
\item[(2)] for all $O\subset \mathcal{X}$ open
\begin{align*}
\liminf_{n\to \infty} \frac{1}{a_n} \log \mathbb{P}(X_n \in O) \geq - \inf_{x\in C} \mathcal{I}(x) .
\end{align*} 
\end{itemize}
We will only consider good LDPs, which means that the level sets $\{\mathcal{I}\leq L\}$ of the rate function are compact for all $L\geq 0$.

\subsection{A general spectral LDP}

In \cite{gamboanag2016} and \cite{gamboanag2017}, we proved the LDP for spectral measures in the one-cut case for measure on $\R$ and on $\T$, respectively. In \cite[Theorem 4.3]{gamboanag2021}, we treated the multi-cut case on $\R$. The following theorem is the analogue of the latter result for measures on the unit circle. It is possible to derive it from the case of the real line, see Section \ref{sec:LDPproofspectral}, even with some simplifications since $\T$ is compact.

\begin{theorem}
\label{LDPsp}
Suppose that the potential $\V$ satisfies assumptions (A1), (A2') and (A3). Then the sequence of spectral measures $\mu_n$ satisfies under $\PP_n^\V$ the LDP with speed $n$ and good rate function
\begin{align}
\label{35}
\mathcal I_{\meas}(\mu) = \mathcal K(\mu_\V| \mu) + \sum_{\lambda\in E(\mu)} \mathcal F_\V (\lambda)\,,
\end{align}
if $\mu \in \mathcal S_1(I)$, and $\mathcal I_{\meas}(\mu)=\infty$ otherwise. 
\end{theorem}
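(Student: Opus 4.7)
The plan is to follow the strategy used in \cite[Theorem 4.3]{gamboanag2021} for the real line case and adapt it to the circle setting. The analysis of the spectral measure $\mu_n = \sum_k \w_k \delta_{\lambda_k}$ splits naturally into two independent components: the positions of the eigenvalues $\lambda_k$ and the weights $\w_k$. Unitary invariance decouples these, and each contributes a distinct piece of the rate function in \eqref{35}: the weights produce the relative entropy $\mathcal K(\mu_\V|\mu)$, while outlier support points contribute the sum $\sum_{\lambda \in E(\mu)} \mathcal F_\V(\lambda)$.

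First, I would recall that under $\mathbb P_n^\V$ the empirical eigenvalue measure $\hat\mu_n$ concentrates exponentially fast on $\mu_\V$: from the Coulomb gas density \eqref{VdMC}, classical potential theory yields an LDP at speed $n^2$ with rate $\mathcal E_\V - \mathcal E_\V(\mu_\V)$. At the slower speed $n$ relevant here, the bulk of $\hat\mu_n$ is therefore forced to sit on $I = \supp(\mu_\V)$, whereas individual eigenvalues may escape to points $\lambda \in \mathbb T \setminus I$ at exponential cost $e^{-n \mathcal F_\V(\lambda)}$. Assumption (A3), together with the definition \eqref{defeffectivepot}, ensures that $\mathcal F_\V$ is a good function on $\mathbb T \setminus \mathrm{Int}(I)$ vanishing exactly on $\partial I$, so that only finitely many outliers appear at a given finite cost.

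Second, I would handle the weights, which are uniform on the simplex and independent of the eigenvalue positions. Conditioning on $\hat\mu_n$ and using a Dirichlet representation $\w_k = X_k / \sum_j X_j$ with i.i.d.\ exponential $X_k$, a Sanov-type argument as in \cite{gamboanag2017} (ungapped case) shows that, given the bulk eigenvalues are asymptotically distributed according to $\mu_\V$, the weighted measure restricted to the bulk satisfies an LDP at speed $n$ with rate $\mathcal K(\mu_\V|\mu)$. The weights attached to outlier locations, being $O(1/n)$ in typical events, must be shown to be negligible on the exponential scale, so that outliers enter the rate function only through $\mathcal F_\V$ and the $\mathcal K$-term sees the restriction of $\mu$ to $I$.

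The main obstacle is gluing the bulk and outlier contributions into a single joint LDP on $\mathcal M_1(\mathbb T)$ equipped with the weak topology. Following \cite{gamboanag2021}, one way is to first prove the LDP for the enhanced object (absolutely continuous bulk on $I$, discrete outlier configuration on $\mathbb T \setminus I$), verify exponential tightness and matching upper and lower bounds, then contract to $\mu_n$ via the continuous mapping $(\mu_{\mathrm{bulk}}, \{\lambda_j\}) \mapsto \mu_{\mathrm{bulk}} + \sum_j w_j \delta_{\lambda_j}$. Alternatively, one can transport the real-line multi-cut LDP of \cite[Theorem 4.3]{gamboanag2021} through a homeomorphism of a neighborhood of $I$ with a compact interval of $\mathbb R$, tracking how $\V$ and $\mathcal F_\V$ transform; compactness of $\mathbb T$ eliminates the exponential tightness issues at infinity that arose in the real-line case, but the delicate step --- showing that the weights attached to outliers do not contribute to the $\mathcal K$ piece --- remains and requires an exponential equivalence argument.
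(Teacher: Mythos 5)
Your main line of attack---re-doing the \cite{gamboanag2021} construction directly on $\mathbb{T}$ (splitting eigenvalues and weights, an LDP for the enhanced pair of bulk measure plus outlier configuration, then contraction)---is sound and mirrors how the real-line result was established, but it repeats a good deal of machinery. The paper instead takes the second, briefly sketched alternative at the end of your proposal: transfer the finished multi-cut LDP from $\mathbb{R}$ to $\mathbb{T}$. The transfer is cleaner than your sketch suggests. After rotating so that $1 \notin I$, one uses the global parametrization $r:[0,2\pi)\to\mathbb{T}$, $\theta\mapsto e^{i\theta}$; the induced push-forward $r:\mathcal{M}_1([0,2\pi))\to\mathcal{M}_1(\mathbb{T})$ is continuous for the weak topology (its inverse is not), which is precisely the regularity the contraction principle requires. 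Setting $V(\theta)=\V(e^{i\theta})$ on $[0,2\pi)$ and $V=+\infty$ elsewhere, the hypotheses (A1)--(A3) of \cite{gamboanag2021} are inherited from (A1), (A2'), (A3) here, so their Theorem 4.3 yields the LDP for $\bar\mu_n=r^{-1}(\mu_n)$ in $\mathcal{M}_1(\mathbb{R})$ with rate $\mathcal K(\bar\mu_V|\bar\mu)+\sum_{\theta\in E(\bar\mu)}\mathcal F_\V(e^{i\theta})$; restricting to $\mathcal{M}_1([0,2\pi))$, where the rate is finite, and contracting under $r$, with relative entropy invariant under the bijection $r$, gives \eqref{35}. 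Two small corrections to your alternative sketch: you do not need a homeomorphism of a neighborhood of $I$ with a compact interval---the entire punctured circle maps onto $[0,2\pi)$, which is what handles outliers anywhere on $\mathbb{T}$ and avoids any patching; and the ``delicate step'' of showing that the weights on outliers do not pollute the $\mathcal{K}$ term does not need to be re-argued, since it is already encoded in the imported real-line rate function. The only checks specific to the circle are that $V$ inherits the hypotheses and that $r$ is measure-continuous.
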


\subsection{An abstract coefficient LDP}
\label{susec:abstractcoef}

Recall the homeomorphism $\psi$ as defined in \eqref{defphimapping}, which maps a measure on $\T$ to its finite or infinite sequence of V-coefficients. The continuity of $\psi$ allows to transfer the LDP in Theorem \ref{LDPsp} to the sequence of V-coefficients by the contraction principle. This yields that the coefficients  of $\mu^{(n)}$ satisfy under $\PP_n^\V$ the LDP with speed $n$ and the rate function is 
\begin{align} \label{ratecoeff}
\mathcal{I}_{\coeff} (\alpha) = \inf\{\mathcal I_{\meas}(\mu) : \psi(\mu)=\alpha\} = \mathcal I_{\meas}(\psi^{-1}(\alpha)). 
\end{align}$\alpha^{(n)}$
For fixed $L\geq d+1$, we now consider the sequence of the first $L$ V-coefficients $\alpha^{(n)}_{[L]}$, which, again due to the contraction principle, satisfies the LDP with speed $n$ and good rate function 
\begin{align}
\label{ratecoeffL}
\mathcal I_{\coeff}^{L}(x) = \inf\{ \mathcal{I}_{\coeff} (\alpha): \alpha_{[L]}=x\} .
\end{align}
We are not able to give an explicit expression for the rate function $\mathcal I_{\coeff}^{L}$. However, for polynomial potentials it is possible to study it directly from the density \eqref{krishnaT}, and we can give a sufficient approximation. 
Recalling the decomposition in Proposition \ref{propclue}, we set
 \[G_j(\alpha_{[j, j+d]}) = G(\alpha_{[j, j+d]}) - G(\alpha^\V_{[j, j+d]})\]
 and
\[F_-^0(\alpha_{[d-1]})= F_-(\alpha_{[d-1]}) - F_-(\alpha_{[d-1]}^\V) \ , \ F_+^0(\alpha) = F_+(\alpha) - F_+(\alpha^\V)\,. \]
Furthermore, let
\begin{align} \label{defWrate}
{\mathcal W_L }(\alpha_{[L-1]}):= F_-^0(\alpha_{[d-1]}) +  
 \sum_{j=0}^{L-d-1}G_j(\alpha_{[j, j+d]})- 
 \sum_{j=0}^{L-1}\log \frac{1-|\alpha_j|^2}{1-|\alpha_j^\V|^2}\,.
\end{align}
Replacing the rate $\mathcal I_{\coeff}^{L}$ by the function $\mathcal W_L$ gives then a small error for coefficients close to those of $\mu_\V$. An essential ingredient for our proofs is the following theorem, which establishes the LDP for the truncated random Verblunsky coefficients and provides a crucial uniform proxy for the rate function. 

\begin{theorem}
\label{BSZmod1}
Assume that $\V$ is a symmetric Laurent polynomial as in \eqref{even}. Then,
$(\alpha^{(n)}_{[L]})$ satisfies a LDP with good rate function ${\mathcal I}_{\coeff}^L$.
Furthermore, there exists a constant $C_\V$, such that for any $L > d$
\begin{align}
\label{ratemaj}
\left|\mathcal I_{\coeff}^L(x) - \mathcal W_L(x)
\right| \leq C_\V \sum _{L-d \leq  k\leq L-1} |x_k  - \alpha_k^\V| =: m^\V_L (x_{[L-d, L-1]})\,.
\end{align}
\end{theorem}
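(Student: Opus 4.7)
The plan is to establish the LDP by contraction from Theorem~\ref{LDPsp}, and then to obtain the bound \eqref{ratemaj} by a direct analysis of the density \eqref{krishnaT} together with the algebraic decomposition of Proposition~\ref{propclue}.

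First, the existence of the LDP is immediate. The map $\psi$ is a homeomorphism between $\mathcal M_1(\T)$ with the weak topology and $\mathcal R$ with the topology described after \eqref{defphimapping}, and the truncation $\pi_{L-1}$ is continuous. Two successive applications of the contraction principle transfer the spectral LDP of Theorem~\ref{LDPsp} to the truncated V-coefficients, producing the good rate function $\mathcal I_{\coeff}^L$ as in \eqref{ratecoeffL}.

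For the bound \eqref{ratemaj}, I would start from the density \eqref{krishnaT} and insert the decomposition of Proposition~\ref{propclue}. Subtracting the value at the reference sequence $\alpha^\V$ absorbs the normalization $\tilde Z_n^\V$, while the $F_+^0$ contribution is $O(1)$ uniformly by \eqref{decompositionerror}, and the weight $(k+2)/n$ appearing in the log-sum produces only an $O(1)$ correction. After dividing by $n$ these two contributions disappear in the large $n$ limit, so one expects $\mathcal I_{\coeff}^L(x)$ to be close to $\mathcal W_L(x)$. For the upper bound I would extend $x=(x_0,\dots,x_{L-1})$ by $\bar\alpha_k=\alpha_k^\V$ for $k\geq L$ and feed this extension into the variational formula defining $\mathcal I_{\coeff}^L$. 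The tail sum of the $G_j$'s for $j\geq L$ and the tail of the normalized log-sum vanish identically on this extension, leaving only the $d$ ``straddling'' terms $G_j(\bar\alpha_{[j,j+d]})$ for $L-d\leq j\leq L-1$. Since $G$ is a polynomial and $G_j(\alpha^\V_{[j,j+d]})=0$, local Lipschitz continuity of $G$ on the compact set $\overline{\D}^{d+1}$ gives $|G_j(\bar\alpha_{[j,j+d]})|\leq C_\V\sum_{L-d\leq k\leq L-1}|x_k-\alpha_k^\V|$, and hence $\mathcal I_{\coeff}^L(x)\leq \mathcal W_L(x)+C_\V m^\V_L(x_{[L-d,L-1]})$.

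The matching lower bound is the more delicate part. I would derive it by Laplace's principle applied directly to the marginal density: after integrating out $\alpha_L,\dots,\alpha_{n-1}$ in \eqref{krishnaT}, the dominant exponential contribution as $n\to\infty$ is the infimum of the tail energy over extensions of $x$, and any extension that fails to track $\alpha_k^\V$ asymptotically would violate the vanishing of $\mathcal I_{\coeff}$ at $\alpha^\V$. Matching this saddle value against the finite piece $\mathcal W_L(x)$ reproduces the same Lipschitz remainder, now as a lower bound. The main obstacle is making all of these estimates uniform in $L$, so that a single constant $C_\V$ controls every boundary effect: this demands a simultaneous control of the normalization $\tilde Z_n^\V$ and of the partition function of the tail problem, and ultimately rests on the control assumption (A3) for $\V$, the compactness of $\overline{\D}$, and the polynomial structure that makes all the $G_j$'s globally Lipschitz on $\overline{\D}^{d+1}$ with a constant depending only on $\V$.
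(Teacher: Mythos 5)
Your argument for the existence of the LDP is correct and matches the paper: contraction from Theorem~\ref{LDPsp} through $\psi$ and then $\pi_{L-1}$ (plus, as the paper notes, a restriction to $\D^L$ via Lemma~4.1.5 of \cite{demboz98} since $\mathcal I_{\coeff}^L$ is infinite off $\D^L$). The difficulty lies entirely in the bound \eqref{ratemaj}, and there your proposal has two genuine gaps.

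The proposed upper bound is circular. You extend $x$ by $\bar\alpha_k=\alpha_k^\V$ for $k\geq L$ and write $\mathcal I_{\coeff}^L(x)\leq\mathcal I_{\coeff}(\bar\alpha)$; so far so good. But then you evaluate $\mathcal I_{\coeff}(\bar\alpha)$ by claiming that the tail of the $G_j$'s and of the normalized log-sum ``vanish identically on this extension,'' leaving a finite polynomial expression. That presupposes that $\mathcal I_{\coeff}(\bar\alpha)$, which is defined as $\mathcal I_{\meas}\bigl(\psi^{-1}(\bar\alpha)\bigr)$, admits a series decomposition of the type $F_-^0+\sum_j G_j-\sum_j\log(\cdot)$. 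No such decomposition of the entropy-plus-outlier rate function in V-coefficient coordinates is available a priori: it is precisely the content of Theorem~\ref{Unitabstractnewsumrule}, which in turn rests on Theorem~\ref{BSZmod1}. (The ungapped BSZ sum rule cannot be invoked either, since under (A2') the reference measure has a gap.) To compute $\mathcal I_{\meas}\bigl(\psi^{-1}(\bar\alpha)\bigr)$ directly you would need the density of the measure with those V-coefficients, which is not explicitly accessible.

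The lower bound you sketch via Laplace's principle on the marginal density correctly identifies the obstacle—control of the normalization $\tilde Z_n^\V$ and of the tail partition function, uniformly in $L$—but offers no mechanism for it; assumptions (A3) and compactness of $\overline{\D}$ do not by themselves yield the required uniform saddle-point estimate. The paper's proof circumvents this entirely by never touching $\tilde Z_n^\V$ or the tail integral: it estimates the ratio
\[
\frac{1}{n}\log\frac{\PP_\V^{(n)}\bigl(\alpha^{(n)}_{[L-1]}\in B_{\delta,L}(x)\bigr)}{\PP_\V^{(n)}\bigl(\alpha^{(n)}_{[L-1]}\in B_{\delta,L}(\alpha^\V_{[L-1]})\bigr)}
\]
using the decomposition $H_n=\mathcal W_L+\ell_0+\tilde H^{\mathrm m}_n+\tilde H^{\mathrm h}_n$. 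Because $\tilde H^{\mathrm h}_n$ depends only on $\alpha_{[L,n-1]}$, the tail integral \emph{factors out and cancels in the ratio}; the boundary term $\tilde H^{\mathrm m}_n$ is bounded by $m^\V_L$ exactly as you anticipated (this part of your argument is correct and is where the Lipschitz estimate on $G$ enters), and $\ell_0$ is $O(1/n)$. Letting $n\to\infty$ and then $\delta\to 0$, and using that $\PP_\V^{(n)}\bigl(\alpha^{(n)}_{[L-1]}\in B_{\delta,L}(\alpha^\V_{[L-1]})\bigr)\to1$, the rate at $x$ is read off directly, giving both inequalities in \eqref{ratemaj} with no reference to the normalization. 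You should replace your two one-sided arguments by this single ratio comparison.
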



\subsection{From LDPs to sum rules and gems}

This section gives a proof of our main results in Section \ref{sec:our_results}, as a consequence of the large deviation results presented above. 


\subsubsection{Proof of Theorem \ref{Unitabstractgem}} 
For $\V$ a symmetric Laurent polynomial, the assumptions of Theorem \ref{LDPsp} are satisfied and the sequence of spectral measures $\mu_n$ satisfies the LDP with speed $n$ and rate function $\mathcal I_{\meas}$. By the arguments of Section \ref{susec:abstractcoef}, the rate function for the V-coefficients of $\mu_n$ is 
\begin{align}\label{rate=rate}
\mathcal{I}_{\coeff}(\alpha)=\mathcal I_{\meas}(\mu) , 
\end{align}
where $\mu=\psi^{-1}(\alpha)$. By the Dawson-G\"artner Theorem \cite[Theorem 4.6.1]{demboz98}, this rate function satisfies
\begin{align} \label{dawsongaertner}
\mathcal I_{\coeff}(\alpha) = \sup_{L\geq 1} \mathcal I_{\coeff}^{L}(\alpha_{[L-1]}) = \lim_{L\to \infty} \mathcal I_{\coeff}^{L}(\alpha_{[L-1]}) ,  
\end{align}
where the last identity holds since $I_{\coeff}^{L}(\alpha_{[L-1]})$ is increasing in $L$ by relation \eqref{ratecoeffL}. We have therefore by Theorem \ref{BSZmod1}
\begin{align} \label{ratecomparison}
\limsup_{L\to \infty} \mathcal W_L (\alpha_{[L-1]}) & \leq \mathcal I_{\coeff}(\alpha) + \limsup_{L\to \infty }  m^\V_L (\alpha_{[L-d, L-1]}) \\
\notag & \leq  \mathcal I_{\coeff}(\alpha) + 2d C_\V 
\end{align} 
with $\mathcal W_L$ as defined in \eqref{defWrate}, and the analogous lower bound
\begin{align} \label{ratecomparison2}
\liminf_{L\to \infty} \mathcal W_L (\alpha_{[L-1]}) & \geq \mathcal I_{\coeff}(\alpha) - \limsup_{L\to \infty }  m^\V_L (\alpha_{[L-d, L-1]}) \\
\notag & \geq  \mathcal I_{\coeff}(\alpha) - 2d C_\V  . 
\end{align} 
This proves that $\sup_L \mathcal W_L (\alpha_{[L-1]})$ is finite if and only if $\mathcal{I}_{\coeff}(\alpha)=\mathcal I_{\meas}(\mu)$ is finite. By Proposition \ref{propclue},  
\begin{align} \label{RWcomparison}
 {\mathcal R_L }(\alpha_{[L-1]})  - {\mathcal W_L }(\alpha_{[L-1]}) =  F_+(\alpha_{[n-d, n-1]}) - F_+(\alpha^\V_{[n-d, n-1]}).
\end{align}
Since $F_+$ is polynomial, the right hand side may be bounded by a  constant $C$ depending on $\V$, for all $L$ and all sequences $\alpha$. Therefore, $\sup_L \mathcal R_L (\alpha_{[L-1]})$ is finite if and only if $\mathcal I_{\meas}(\mu)$ is finite. 
The conditions (1), (2) and (3) in Theorem \ref{Unitabstractgem} are then equivalent to the finiteness of $\mathcal I_{\meas}(\mu)$. Indeed, note that the Kullback-Leibler divergence $\mathcal K(\mu_\V|\mu)$ is finite if and only if $\mu_\V$ has a density $h$ with respect to $\mu$ and 
\begin{align} \label{KLfinite}
\int \log h(z) \ddd\mu_\V(z) <\infty , 
\end{align}
and in this case $\ddd\mu(z) = h^{-1}(z) \ddd\mu_\V(z)+ \ddd\mu_s(z)$ is the decomposition of $\mu$ with respect to $\mu_\V$. \qed

\subsubsection{Proof of Theorem \ref{Unitabstractnewsumrule}} 
We argue similarly as in the proof of Theorem \ref{Unitabstractgem}, but this time we have additionally as a consequence of $\mu\in M_\V$, that 
\begin{align}
\lim_{L\to \infty } m^\V_L (\alpha_{[L-d, L-1]}) = 0 .  
\end{align} 
This implies by \eqref{rate=rate} and the inequalities \eqref{ratecomparison}, \eqref{ratecomparison2} 
\begin{align*}
\mathcal I_{\meas}(\mu) = \lim_{L\to \infty} {\mathcal W_L}(\alpha_{[L-1]}) . 
\end{align*}
Using again that $\mu\in M_\V$, the right hand side of \eqref{RWcomparison} tends to zero as $L\to \infty$, and we obtain the claimed sum rule 
\begin{align*}
\mathcal I_{\meas}(\mu) = \lim_{L\to \infty} {\mathcal R_L}(\alpha_{[L-1]}) .
\end{align*}
\qed

\subsubsection{Proof of Corollary \ref{Unitnewsumrulesymmetric}} 
Let $\mu\in \mathcal S_1(I)$ be a symmetric probability measure with real coefficients $\alpha$. If $\mathcal I_{\meas}(\mu)=\infty$, inequality \eqref{ratecomparison2} and the uniform bound for \eqref{RWcomparison} imply $\lim_{L\to \infty} \mathcal R_L(\alpha_{[L-1]})=\infty$ and we have identity for $\mu$. Suppose $\mathcal I_{\meas}(\mu)<\infty$. In view of Theorem \ref{Unitabstractnewsumrule}, we need to show that either $\mu\in M_\V$, that is, $|\alpha_k-\alpha_k^\V|\to 0$, or $\mu_{-1}\in M_\V$, such that $|(-\alpha_k)-\alpha_k^\V|\to 0$. 
We apply the following remarkable result, an extension of Rakhmanov's theorem.

\begin{theorem}
\label{Rakhm}
Let $\mu\in \mathcal M_1(\mathbb T)$ with infinite support and decomposition $\ddd \mu(x) = w(x)\ddd x +\ddd\mu_s(x)$ with respect to the uniform measure. 
\begin{enumerate}
\item \cite[Corollary 9.1.11]{simonopuc2}
If $w(z) > 0$ for a.e. $z\in \T$, then
\begin{align}
\lim_{n\to \infty} |\alpha_n| = 0\,.
\end{align}
\item
\cite[Theorem 13.4.4]{simonopuc2}
\label{Rakhm-arc}
Let $\mathbf a \in (0, 1)$,  $\theta_{\mathbf a}= 2 \arcsin \mathbf a$, $\eta \in \mathbb T$ and 
\[\hat a = \Gamma_{a, \eta} = \{z \in \mathbb T : |\arg(\eta z)| > \theta_{\mathbf a}\}\,.\]
If $\hbox{ess}\ \hbox{supp}(\mu) = \Gamma_{a, \eta}$ and $w(z)>0$ for a.e. $z\in \Gamma_{a, \eta}$, then
\begin{align}
\label{nn+1} 
\lim_{n\to \infty} |\alpha_n| = \mathbf a \ , \ \lim_{n\to \infty} (\alpha_n\bar\alpha_{n+1}) = \mathbf a^2\eta\,,
\end{align}
hence for $j \geq 2$ 
\begin{align}
\label{5.26}
\lim_{n\to \infty} \alpha_n \bar\alpha_{n+j}
 = \mathbf a^2 \eta^j\,.
\end{align}
\end{enumerate} 
\end{theorem}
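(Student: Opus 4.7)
The plan is to prove both statements by combining compactness of the space of shifted V-coefficient sequences with weak-convergence analysis of the ``Khrushchev measures'' $d\nu_n := |\varphi_n|^2 d\mu$, and exploiting their tight link with the Szeg\H{o} recursion \eqref{recOPUC}.

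For Part~(1), I would argue by contradiction. If $|\alpha_n|\not\to 0$, then along some subsequence $(n_k)$ the shifted tail $(\alpha_{n_k+j})_{j\ge 0}$ converges coordinatewise, by compactness of $\overline{\D}^{\N_0}$, to some $\beta$ with $|\beta_0|\ge \delta>0$. Let $\mu_\infty$ be the spectral measure having V-coefficients $\beta$. The key input is Khrushchev's theorem, which relates the weak limits of $\nu_n$ to the spectral measures of the shifted V-sequences and, under the hypothesis $w>0$ a.e. on $\T$, forces $\nu_n \to \UNIF$ weakly. Combined with the coordinatewise convergence of $(\alpha_{n_k+j})_j$ and the continuity of $\psi^{-1}$ (see \eqref{defphimapping}), this identifies $\mu_\infty=\UNIF$; but $\UNIF$ has all V-coefficients zero, contradicting $|\beta_0|>0$.

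For Part~(2) the same structural strategy applies, with the limit measure identified as the equilibrium measure $\omega_{\Gamma_{\mathbf a,\eta}}$ of the arc. I would proceed along one of two equivalent routes. Route~A: apply a M\"obius/Joukowski-type conformal map sending the complement of $\Gamma_{\mathbf a,\eta}$ in $\C$ to the exterior of the unit disc, transport $\mu$ to a measure on $\T$ with full essential support, and reduce to Part~(1); the phase $\eta$ appears as a rotation parameter in the transport. Route~B: rotate so that $\eta=1$ and use the Szeg\H{o} mapping to convert the resulting symmetric measure on the arc into a measure on an interval of $\R$. Denisov's extension of Rakhmanov's theorem for Jacobi matrices then yields $a_n \to a_\infty$, $b_n \to b_\infty$, and the Geronimus relations translate this back into $|\alpha_n|\to \mathbf a$. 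The sharper identity $\alpha_n\bar\alpha_{n+1}\to \mathbf a^2 \eta$, and hence \eqref{5.26}, follows by extending the Khrushchev/shift argument from single V-coefficients to the pair $(\alpha_n,\alpha_{n+1})$: the coordinatewise limit of the pair-shifts, evaluated against $\omega_{\Gamma_{\mathbf a,\eta}}$, recovers the product limit with its phase.

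The main obstacle is the weak-convergence step, i.e., pinning down the limit of $\nu_n$. For the full-circle case this is Khrushchev's theorem, resting on delicate Christoffel-function bounds and Szeg\H{o}'s theorem. For the arc case it is substantially harder because the target equilibrium measure is supported on a strict subset of $\T$, has inverse-square-root singularities at the arc endpoints, and the gap carries the phase $\eta$; uniformly controlling a lower bound of $w$ on the arc while tracking this phase is the delicate point, which is precisely why the Szeg\H{o}-mapping detour to the well-studied Jacobi setting (Route~B) is the cleanest path.
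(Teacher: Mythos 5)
The paper does not prove parts~(1) and~(2) of this theorem at all: they are quoted verbatim from \cite[Corollary~9.1.11]{simonopuc2} (Rakhmanov's theorem) and \cite[Theorem~13.4.4]{simonopuc2} (the Bello--L\'opez arc version). You have instead sketched a from-scratch proof via Khrushchev's weak-$*$ theory (for~(1)) and a conformal/Szeg\H{o}-map reduction (for~(2)). That is a genuinely different endeavor, and your sketch leaves the heart of the matter open: you yourself flag the convergence $\nu_n\to\UNIF$ (resp.\ to the arc equilibrium measure) as ``the main obstacle'' resting on ``delicate Christoffel-function bounds,'' and you never close it. Also beware that the Khrushchev measures $\nu_n=|\varphi_n|^2\ddd\mu$ do \emph{not} have V-coefficients $(\alpha_{n+j})_{j\ge0}$; the link between $\nu_n$ and the shifted tail goes through the $n$-th Schur iterate and Khrushchev's formula, so ``coordinatewise convergence of the tails + $\nu_n\to\UNIF$ $\Rightarrow$ $\mu_\infty=\UNIF$'' is not the one-step identification you make it sound like. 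So as a replacement for the citation, the proposal is a roadmap, not a proof.

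The one item the paper actually has to justify is the word ``hence'': deducing \eqref{5.26} from \eqref{nn+1}. Here your proposal (``extend the Khrushchev/shift argument from single coefficients to the pair $(\alpha_n,\alpha_{n+1})$ and evaluate against $\omega_{\Gamma_{\mathbf a,\eta}}$'') both misallocates effort and is vague --- the limit $\alpha_n\bar\alpha_{n+1}\to\mathbf a^2\eta$ is already part of the quoted Theorem~13.4.4 and need not be rederived, and ``pair-shifts evaluated against the equilibrium measure'' is not a defined operation. The intended ``hence'' is purely algebraic: since $\mathbf a>0$, for $n$ large all $\alpha_{n+k}$ are nonzero and
\begin{align*}
\alpha_n\bar\alpha_{n+j}
=\frac{\displaystyle\prod_{k=n}^{n+j-1}\bigl(\alpha_k\bar\alpha_{k+1}\bigr)}{\displaystyle\prod_{k=n+1}^{n+j-1}|\alpha_k|^2}
\longrightarrow
\frac{(\mathbf a^2\eta)^{\,j}}{\mathbf a^{\,2(j-1)}}
=\mathbf a^2\eta^{\,j},
\end{align*}
which is exactly \eqref{5.26}. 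You should cite the two theorems as the paper does and supply only this telescoping computation; that is the entire content of the ``proof'' here.
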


Under assumption (A2), the support of $\mu_\V$ is a single arc $\Gamma_{a, \eta}$ as in case (2) of Theorem \ref{Rakhm}. The measure $\mu_\V$ is symmetric with real coefficients and we may assume without loss of generality $\eta=1$, that is, the point 1 is not contained in the arc. It was shown in Theorem 11.2.4 and Theorem 16.1.5 in \cite{pastur_shcherbina} that $\mu_\V$ has a density $v$ with respect to the uniform law on $\T$, which satisfies $v>0$ a.e. on $\Gamma_{a, \eta}$.   Consequently, Theorem \ref{Rakhm} gives  
\begin{align}\label{rakhmanovlimit1}
\lim_{n\to \infty} |\alpha^\V_n| = \mathbf a \ , \ \lim_{n\to \infty} (\alpha^\V_n\alpha^\V_{n+1}) = \mathbf a^2 , 
\end{align}
which implies that the limit $\gamma^\V:=\lim_{n\to \infty}\alpha^\V_n$ exists.  
Now, if $\mu$ is a symmetric measure with real V-coefficients $\alpha$ and with $\mathcal I_{\meas}(\mu)<\infty$, then the essential support of $\mu$ has to coincide with the essential support of $\mu_\V$. Additionally, since $\mathcal K(\mu_\V|\mu)$ is finite, we have as in \eqref{KLfinite} that $\ddd\mu(z) = h^{-1}(z) \ddd\mu_\V(z)+ \ddd\mu_s(z)$ with $h(z)\in (0,\infty)$ a.e. on $\Gamma_{a, \eta}$. Then{\fab ,} the Lebesgue decomposition is $\ddd\mu(z) =  w(z)\ddd z+ \ddd\mu_s(z)$ with $w(z)=h^{-1}(z)v(z)$ positive a.e. on $\Gamma_{a, \eta}$. Another application of Theorem \ref{Rakhm} yields that the limit  $\gamma:=\lim_{n\to \infty}\alpha_n$ exists as well and $\gamma^2 = \mathbf a = (\gamma^\V)^2$. That is, we have either $\gamma=\gamma^\V$ or $\gamma=-\gamma^\V$. If $\gamma=\gamma^\V$, the difference of coefficients satisfies $|\alpha_k-\alpha_k^\V|\to 0$. 
If $\gamma=-\gamma^\V$, we get that $|\alpha_k+\alpha_k^\V|\to 0$. 
\qed

\section{Proofs}
\label{sec:proofs}
\subsection{Spectral LDP}
\label{sec:LDPproofspectral}

The LDP in Theorem \ref{LDPsp} follows with similar arguments as the spectral LDP in \cite{gamboanag2021}. In fact, it is possible to reduce the LDP on the unit circle to a suitable LDP on the real line. For this, we assume that $1$ is not contained in the support $I$ of the limit measure $\mu_\V$, otherwise we may just rotate the circle. Let $r$ be the mapping $\theta\mapsto e^{i \theta}$ from $[0,2\pi)$ to the unit circle, which induces a bijection from $\mathcal M_1([0,2\pi))$ to $\mathcal M_1(\T)$, which we denote again by $r$. Note that $r$ is continuous with respect to the weak topology, but $r^{-1}$ is not. For a measure $\mu\in \mathcal S_1(I)$, the pushforward $r^{-1}(\mu)$ yields a measure in the set $\overline{\mathcal S}_{1}(\overline{I})$, which contains all measures on $[0,2\pi)$ with support $\overline{J}\cup \overline{E}$, with $\overline{J}\subset  \overline I = r(I)$ and $\overline{E}$ an at most coutable subset of $[0,2\pi)\setminus \overline I$. 

From the distribution of the random spectral measure $\mu_n$ in Section \ref{sec:randomization}, we directly obtain the distribution of 
\begin{align}
\bar\mu_n = r^{-1}(\mu_n) = \sum_{i=1}^n w_i\delta_{\theta_i}
\end{align}
as follows: the weights $(w_1,\dots ,w_n)$ are independent of the support points and uniformly distributed on the unit simplex, the support points $\theta_1,\dots ,\theta_n$ have the joint Lebesgue density 
\begin{align*}
Z_n^{-1} \Delta (\theta_1,\dots ,\theta_n)^2 \exp\left(-n\sum_{i=1}^n V(\theta_i) \right) .
\end{align*} 
The potential $V:\mathbb{R}\to (-\infty,\infty]$ is given by 
\begin{align}
V(\theta) = \begin{cases} \V(e^{i\theta})  & \text{ if }\theta\in [0,2\pi),\\
+\infty & \text{ otherwise. } \end{cases}
\end{align}
The equilibrium measure is $\bar\mu_V=r^{-1}(\mu_\V)$.  
For a potential $\V$ satisfying assumptions (A1), (A2') and (A3), the potential $V$ satisfies the analogous assumptions (A1), (A2) and (A3) in \cite{gamboanag2021}. Theorem 4.3 therein yields the following LDP.

\begin{theorem}
\label{LDPsp2}
If $\V$ satisfies assumptions (A1), (A2') and (A3), then the sequence of measures $\bar\mu_n$ satisfies the LDP in $\mathcal M_1(\mathbb{R})$ with speed $n$ and good rate function
\begin{align}
\label{35-2}
\overline{\mathcal I}_{\meas}(\bar\mu) = \mathcal K(\bar\mu_V| \bar\mu) + \sum_{\theta\in E(\bar\mu)} \mathcal F_\V (e^{i\theta})\,,
\end{align}
if $\bar\mu \in \mathcal S_1(\bar I)$, and $\overline{\mathcal I}_{\meas}(\bar\mu)=\infty$ otherwise. 
\end{theorem}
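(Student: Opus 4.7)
The strategy is to reduce Theorem \ref{LDPsp2} to \cite[Theorem 4.3]{gamboanag2021}, which establishes a spectral-measure LDP in the multi-cut regime for random matrix ensembles on the real line. The random measure $\bar\mu_n = r^{-1}(\mu_n)$ has weights $(w_1,\dots,w_n)$ that are uniform on the $n$-simplex and independent of the angles $(\theta_1,\dots,\theta_n)$; the joint law of the angles is of Coulomb-gas type with confining potential $V$. My plan is first to verify that $V$ satisfies the counterparts of assumptions (A1), (A2), (A3) imposed in \cite{gamboanag2021}, then to apply that theorem, and finally to identify the resulting rate function with the expression \eqref{35-2}.

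For the verification step, (A1) for $V$ follows immediately from (A1) on $\V$ together with the convention $V \equiv +\infty$ off $[0,2\pi)$: this confines the eigenangles but does not interfere with the LDP because, by a preliminary rotation of $\T$, we have arranged $1 \notin I$, so the support $\bar I = r^{-1}(I)$ lies compactly inside $(0,2\pi)$. Assumption (A2') transfers to the multi-cut assumption of \cite{gamboanag2021} by applying the homeomorphism $r^{-1}$ to each arc, and (A3) for $V$ follows from (A3) for $\V$ because the effective potential on the line is the composition of $\mathcal J_\V$ with $r$, up to the normalization that appears in the definition of the Robin constant. The equilibrium measure for the variational problem on $\mathbb R$ is therefore $\bar\mu_V = r^{-1}(\mu_\V)$, and the corresponding effective potential equals $\mathcal F_\V \circ r$. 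Invariance of the Kullback--Leibler divergence under the bijection $r$ then gives $\mathcal K(\bar\mu_V \mid \bar\mu) = \mathcal K(\mu_\V \mid r(\bar\mu))$, and the rate function supplied by \cite[Theorem 4.3]{gamboanag2021} on $\bar\mu \in \overline{\mathcal S}_1(\bar I)$ takes the form $\mathcal K(\bar\mu_V \mid \bar\mu) + \sum_{\theta \in E(\bar\mu)} \mathcal F_\V(e^{i\theta})$, matching \eqref{35-2}.

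The main obstacle is the discrepancy between the log-interaction kernel inherited from the circle, $-\log|e^{i\theta}-e^{i\varphi}|$, and the flat kernel $-\log|\theta-\varphi|$ used on the line: they differ by a function that is bounded and continuous on $\bar I \times \bar I$ (and in fact on any compact of $(0,2\pi)^2$ away from the diagonal). On the empirical-measure scale (speed $n^2$) this matters, but the present LDP operates at speed $n$ for the weighted spectral measure $\bar\mu_n$, and at this speed the angles are essentially frozen at the equilibrium. The hard part of the reduction is therefore to check that the Vandermonde correction $\prod_{j<k} \bigl(2\sin((\theta_j-\theta_k)/2)/(\theta_j-\theta_k)\bigr)^2$ can be absorbed into the normalizing constant without affecting the rate function: this follows from uniform continuity of the correction kernel on compacts away from the diagonal together with exponential tightness of the eigenangles near $\bar I$ provided by (A3). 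Once this absorption is made, the hypotheses of \cite[Theorem 4.3]{gamboanag2021} are met verbatim and the conclusion follows. \qed
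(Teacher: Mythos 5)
Your overall strategy—push $\mu_n$ through $r^{-1}$ to $\mathcal M_1([0,2\pi))$, view the angles as a log-gas on the line, invoke \cite[Theorem~4.3]{gamboanag2021}, and then observe that the rate function transported back agrees with \eqref{35-2}—is exactly the route the paper takes, and your identification of the equilibrium measure as $\bar\mu_V=r^{-1}(\mu_\V)$, of the effective potential as $\mathcal F_\V\circ r$, and of the Kullback–Leibler term via the bijection $r$ are all correct.

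Where your argument goes wrong is the final paragraph, on ``absorbing'' the Vandermonde correction. You propose to write the density of the angles as $\prod_{j<k}(\theta_j-\theta_k)^2$ times the ratio $\prod_{j<k}\bigl(2\sin((\theta_j-\theta_k)/2)/(\theta_j-\theta_k)\bigr)^2$, treat the second factor as a perturbation, and argue that at speed $n$ it contributes only to the normalizing constant. This is not correct, and the reason is structural: the correction factor is a \emph{two-body} interaction, not a one-body term, and it is precisely what determines which measure the empirical distribution of the angles concentrates on. If you remove it, the empirical measure under the flat-Vandermonde density concentrates on the minimizer of $\int V\,\ddd\mu-\iint\log|x-y|\,\ddd\mu\,\ddd\mu$, which is \emph{not} $r^{-1}(\mu_\V)$, and the resulting effective potential is likewise not $\mathcal F_\V\circ r$. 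Since the speed-$n$ rate function \eqref{35-2} is built out of the equilibrium measure and the effective potential, changing the interaction kernel changes the rate function; the discrepancy does not disappear in the normalizing constant. (Your remark that ``the angles are essentially frozen at the equilibrium'' is true, but which equilibrium they freeze at depends on the kernel, so there is a circularity.)

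The correct reading of the reduction—and the one the paper intends, despite the somewhat loose notation $\Delta(\theta_1,\dots,\theta_n)^2$—is that the density of the angles carries the interaction kernel $-\log|e^{i\theta}-e^{i\varphi}|$ as is, and that the relevant LDP on the line (the proof of \cite[Theorem~4.3]{gamboanag2021}, or a mild variant of it) is applicable with this kernel: the hypotheses (A1)–(A3) concern the potential, the multi-cut support structure, and control of outliers through the effective potential, all of which are computed relative to whichever interaction kernel one uses. So the assumptions should be verified with the circle kernel, and once they are, the theorem applies directly; there is no residual correction to dispose of. Your plan would be repaired by deleting the ``absorption'' paragraph and instead recording that the transference of assumptions to $V$ is done relative to the kernel $-\log|e^{i\theta}-e^{i\varphi}|$ on $[0,2\pi)^2$ (bounded and continuous off the diagonal, since $1\notin I$ after a rotation), and that the real-line LDP holds in that setting.
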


Note that we have $\mathcal K(\bar\mu_\V| \bar\mu)= \mathcal K(\mu_\V| r(\bar\mu))$ by the reversible entropy principle. 
The random measures $\bar\mu_n$ are elements of $\mathcal M_1([0,2\pi))$ and the rate function $\overline{\mathcal I}_{\meas}$ is infinite on the complement of this set. We may therefore restrict the LDP in Theorem \ref{LDPsp2} to the space $\mathcal M_1([0,2\pi))$.  
Since $r$ is a continuous mapping from $\mathcal M_1([0,2\pi))$ to $\mathcal M_1(\T)$, we obtain the LDP in Theorem \ref{LDPsp} by the contraction principle.

\subsection{Coefficient LDP}

Recall that as a consequence of the LDP in Theorem \ref{LDPsp} and the contraction principle, the sequence $(\alpha^{(n)}_{[L-1]})_{n\geq 1}$ of the first $L$ V-coefficients of $\mu_n$, satisfies the LDP in $\pi_L(\mathcal R)$ with speed $n$ and good rate function  
\[\mathcal I_{\coeff}^L(x) = \inf \{\mathcal I_\meas(\mu) ; (\psi(\mu))_{[L-1]} = x\}\,.\]
The set $\D^L$ is a strict subset of the ambient space $\pi_L(\mathcal R)$. If $x\in \pi_L(\mathcal R)\setminus \D^L$ and $(\psi(\mu))_{[L-1]} =x$, the measure $\mu$ is finitely supported. In this case, $\mathcal I_\meas(\mu)=\infty$. Therefore, the rate function $\mathcal I_{\coeff}^L$ is infinite outside of $\D^L$. The first $L$ V-coefficients of $\mu_n$ are elements of $\D^L$ with probability 1 as soon as $n> L$, since then $\mu_n$ has at least $L+1$ support points almost surely. By Lemma  4.1.5 in \cite{demboz98}, we obtain that the sequence $(\alpha^{(n)}_{[L-1]})_{n> L}$ satisfies the LDP in $\D^L$ with speed $n$ and good rate function the restriction of $\mathcal I_{\coeff}^L$ to $\D^L$. 

For $x\in \D^L$, we denote by $B_{\delta, L}(x)$ the ball around $x$ in $\D^L$ with radius $\delta $ in the sup-norm. As a consequence of the LDP, we have for any $x\in \D^L$ 
\begin{align} \label{weakballlimit}
\mathcal I_{\coeff}^L(x) & = \lim_{\delta \to 0}\limsup_{n\to \infty} -\frac{1}{n}\log \PP\sn_\V\big(\alpha\sn_{[L]}\in B_{\delta, L}(x)\big) \\
& = \lim_{\delta \to 0}\liminf_{n\to \infty} -\frac{1}{n}\log \PP\sn_\V\big(\alpha\sn_{[L]}\in B_{\delta, L}(x)\big)  . \notag 
\end{align}
This allows to study the rate function by estimating the probability of small balls from the expression of the density in \eqref{krishnaT}. Without loss of generality, we may assume that the balls $B_{\delta, L}$ have a positive distance to the boundary of $\D^L$. 

Up to a change of the normalizing constant, the density of $\alpha^{(n)}= \alpha_{[n-1]}^{(n)}$ is given by 
\begin{align*}
(\mathcal Z_n^\V)^{-1}\exp \left( - n H_n (\alpha_{[n-1]}) \right)  
\end{align*}
where
\begin{align}
H_n (\alpha_{[n-1]}) =  \tr \V(\mathcal G_n(\alpha)) - \tr \V(\mathcal G_n^\V)- \sum_{k=0}^{n-2} \left(1 - \frac{k+2}{n}\right) \log \frac{1- |\alpha_k|^2}{1- |\alpha_k^\V|^2}\,. 
\end{align}
Now, owing to Proposition \ref{propclue} and with the centering as in Section \ref{sec:our_results}, we have
\begin{align}
\notag
H_n (\alpha_{[n-1]}) & =  F_-^0(\alpha_{[d-1]}) + F_+^0 (\alpha_{[n-d, n-1]}) + \sum_{k=0}^{n-1-d}G_k(\alpha_{[k, k+d]})\\
\label{likelihood}
&\qquad - \sum_{k=0}^{n-2} \left(1 - \frac{k+2}{n}\right) \log \frac{1- |\alpha_k|^2}{1- |\alpha_k^\V|^2} , 
\end{align}
where we recall that as in \eqref{defGK}, 
\begin{align*}
G_k(\alpha_{[k, k+d]}) = \sum_{j=1}^d \Gamma_{j}(\alpha_{[k,k+d]})- \Gamma_{j}(\alpha^\V_{[k,k+d]}) . 
\end{align*}
The proof now proceeds similarly to the one of \cite[Theorem 3.3]{breuersizei2018} and \cite[Theorem 3.6]{breuersizei2018}.   
For $n>L+d>2d$, we decompose the log-likelihood into three parts, one depending only on the first $L$ coefficients, for which the function $\mathcal W_L$ is the main contribution, one part $H_n^{\mathrm{h}}$ depending only on higher order terms and a mixed term $H_n^{\mathrm{m}}$: 
\begin{align} \label{likelihooddecomposition} 
H_n (\alpha_{[n-1]}) =  \mathcal W_L (\alpha_{[L-1]}) + \ell_0(\alpha_{[L-1]}) + H^{\mathrm{m}}_n (\alpha_{[L-d,L+d]})+H^{\mathrm{h}}_n (\alpha_{[L,n-1]})
\end{align}
where
\begin{align*}
\ell_0(\alpha_{[L-1]}) & = \sum_{k=0}^{L-2} \frac{k+2}{n} \log \frac{1- |\alpha_k|^2}{1- |\alpha_k^\V|^2} , \\ 
H^{\mathrm{m}}_n (\alpha_{[L-d,L+d]}) & = \sum_{k=L-d}^{L} G_k(\alpha_{[k,k+d]}),  \\
H^{\mathrm{h}}_n (\alpha_{[L,n-1]}) & = F_+^0 (\alpha_{[n-d, n-1]}) + \sum_{k=L+1}^{n-1-d} G_k(\alpha_{[k,k+d]}) 
- \sum_{k=L+1}^{n-2} \left(1 - \frac{k+2}{n}\right) \log \frac{1- |\alpha_k|^2}{1- |\alpha_k^\V|^2}.  \\
\end{align*}

We can give suitable bounds for these terms, after some more modifications. For a vector $\alpha$ of V-coefficients, denote by $\alpha^{\V,L}$ the vector with the first $L$ coefficients $\alpha_k$ replaced by $\alpha_k^\V$. Set 
\begin{align*}
\tilde H^{\mathrm{m}}_n (\alpha_{[L-d,L+d]}) & = H^{\mathrm{m}}_n (\alpha_{[L-d,L+d]}) - H^{\mathrm{m}}_n (\alpha^{\V,L}_{[L-d,L+d]}) , \\
\tilde H^{\mathrm{h}}_n (\alpha_{[L,n-1]}) & = H^{\mathrm{h}}_n (\alpha_{[L,n-1]}) + H^{\mathrm{m}}_n (\alpha^{\V,L}_{[L-d,L+d]}) . 
\end{align*} 
Then $\tilde H^{\mathrm{h}}_n$ is still independent of $\alpha_{[L-1]}$ and the decomposition in \eqref{likelihooddecomposition} still holds, with $H^{\mathrm{m}}_n, H^{\mathrm{h}}_n$ replaced by $\tilde H^{\mathrm{m}}_n,\tilde H^{\mathrm{h}}_n$. 

There exists a constant $C$ depending on $x,\delta,L$ and $\V$ such that 
\begin{align}\label{mixedtermbound0}
\ell_0(\alpha_{[L-1]}) \leq \frac{C}{n} 
\end{align}
for all $\alpha_{[L-1]}\in B_{\delta, L}(x)$, since this ball is bounded away from the boundary of $\D^L$. For $\tilde H^{\mathrm{m}}_n$ we have 
\begin{align} \label{mixedtermbound}
& | \tilde H^{\mathrm{m}}_n (\alpha_{[L-d,L+d]}) | \notag  \\
&  \leq \sum_{k=L-d}^L \sum_{j=1}^d\left|  \Gamma_{j}(\alpha_{[k,k+d]}) - \Gamma_{j}(\alpha^{\V,L}_{[k,k+d]}) \right| \notag \\
& \leq \sum_{k=L-d}^L \sum_{j=1}^d \sum _{k \leq k_1\leq k_2\leq \dots \leq k_{2j}\leq k+d} |c_{k_1, \dots, k_{2j}}| |\alpha_{k_1}\bar \alpha_{k_2}\dots \alpha_{k_{2j-1}}\bar \alpha_{k_{2j}} - \alpha^{\V,L}_{k_1}\bar \alpha^{\V,L}_{k_2}\dots \alpha^{\V,L}_{k_{2j-1}}\bar \alpha^{\V,L}_{k_{2j}}| . 
\end{align}
In the remaining difference, any coefficient with index at least $L$ appears in both products, may be factored out and bounded by 1. An extensive use of the triangle inequality and of the boundedness of V-coefficients yields the bound
\begin{align} \label{mixedtermbound2}
 | \tilde H^{\mathrm{m}}_n (\alpha_{[L-d,L+d]}) | \leq C_\V \sum _{L-d \leq k\leq L-1} |\alpha_k  - \alpha_k^\V |= m^\V_L (\alpha_{[L-d, L-1]})\,.
\end{align}
We will consider ratios of probabilities, so that we can ignore the normalizing constant and consider the measure $\widetilde{\PP}\sn_\V =(\tilde Z^V) \PP \sn_\V$. 
Using the above decomposition of $H_n$, we have
\begin{align*}
& \widetilde{\PP}\sn_\V \big(\alpha\sn_{[L-1]}\in B_{\delta, L}(x) \big ) \\
& = \int_{\tilde B(x)} \exp\left\{-n \left(  \mathcal W_L (\alpha_{[L-1]}) + \ell_0(\alpha_{[L-1]}) + \tilde H^{\mathrm{m}}_n (\alpha_{[L-d,L+d]})+\tilde H^{\mathrm{h}}_n (\alpha_{[L,n-1]})\right) \right\} d\lambda_n ,
\end{align*}
where the integral is over $\tilde B(x) = B_{\delta,L} (x)\times \D^{n-1-L}\times \T$. Here, we wrote $\lambda_n$ for the Lebesgue measure on $\D^{n-1}\times T$. After bounding $\ell_0$ as in \eqref{mixedtermbound0} and $\tilde H^{\mathrm{m}}_n $ as in \eqref{mixedtermbound2}, the exponential is a sum of terms depending on $\alpha_{[L-1]}$ and terms depending on $\alpha_{[L,n-1]}$, which implies that this upper bound factorizes:
\begin{align*}
\widetilde{\PP}\sn_\V \big(\alpha\sn_{[L-1]}\in B_{\delta, L}(x) \big) & \leq e^C \int_{B_{\delta,L}(x)} \exp\left\{ -n \left( \mathcal W_L (\alpha_{[L-1]}) + m^\V_L (\alpha_{[L-d, L-1]})\right)\right\} \ddd \lambda_L \\
& \quad \times  
\int \exp\left\{-n \tilde H^{\mathrm{h}}_n (\alpha_{[L,n-1]})\right\} d\lambda_{n-L} .
\end{align*} 
Looking at the ratio of probabilities, we have then 
\begin{align*}
\frac{1}{n} \log \frac{{\PP}\sn_\V \big(\alpha\sn_{[L-1]}\in B_{\delta, L}(x) \big)}{{\PP}\sn_\V \big(\alpha\sn_{[L-1]}\in B_{\delta, L}(\alpha^\V_{[L-1]}) \big)} 
&  = \frac{1}{n} \log \frac{\widetilde{\PP}\sn_\V \big(\alpha\sn_{[L-1]}\in B_{\delta, L}(x) \big)}{\widetilde{\PP}\sn_\V \big(\alpha\sn_{[L-1]}\in B_{\delta, L}(\alpha^\V_{[L-1]}) \big)} 
 \\
& \leq \sup_{ y\in  B_{\delta,L}(x)}  \left(  - \mathcal{W}_L(y) + m^\V_L (y_{[L-d, L-1]})\right) \\
& \quad - \inf_{  y\in  B_{\delta,L}(\alpha^\V_{[L-1]})}  \left( -  \mathcal{W}_L(y) - m^\V_L (y_{[L-d, L-1]})\right)\\
&\quad + \frac{2C}{n} + \frac{1}{n}\log \frac{\lambda_L(B_{\delta, L}(x))}{\lambda_n(B_{\delta, L}(\alpha^\V_{[L-1]}))} . 
\end{align*}
The functions $\mathcal{W}_L$ and $m^\V_L$ are uniformly continuous on any compact subset of $\D^L$, which implies
$$
\lim_{\delta \to 0} \sup_{ y\in  B_{\delta,L}(x)}  \left(  - \mathcal{W}_L(y) + m^\V_L (y_{[L-d, L-1]})\right)  
= - \mathcal{W}_L(x) + m^\V_L (x_{[L-d, L-1]}),
$$
and
$$
\lim_{\delta \to 0} \inf_{ y\in  B_{\delta,L}(\alpha^\V_{[L-1]})}  \left(  - \mathcal{W}_L(y) + m^\V_L (y_{[L-d, L-1]})\right)  \\
= - \mathcal{W}_L(\alpha^\V_{[L-1]})) + m^\V_L (\alpha^\V_{[L-1]})) = 0 .
$$
For the ratio of probabilities this implies
\begin{align*} 
\lim_{\delta\to 0} \limsup_{n\to \infty} \frac{1}{n}  \log \frac{{\PP}\sn_\V \big(\alpha\sn_{[L-1]}\in B_{\delta, L}(x) \big)}{{\PP}\sn_\V \big(\alpha\sn_{[L-1]}\in B_{\delta, L}(\alpha^\V_{[L-1]}) \big)} 
& \leq  - \mathcal{W}_L(x) + m^\V_L (x_{[L-d, L-1]}) .
\end{align*}
Since ${\PP}\sn_\V (\alpha\sn_{[L-1]}\in B_{\delta, L}(\alpha^\V_{[L-1]}) )$ converges to 1, this implies by \eqref{weakballlimit} the lower bound 
\begin{align*}
\mathcal I_{\coeff}^L(x) & = \lim_{\delta \to 0}\liminf_{n\to \infty} -\frac{1}{n}\log \PP\sn_\V\big( \alpha\sn_{[L]}\in B_{\delta, L}(x)\big)
\geq \mathcal{W}_L(x) - m^\V_L (x_{[L-d, L-1]}) .
\end{align*}
The upper bound follows by analogous arguments. 
\hfill $\Box$ 
\newpage
\printnomenclature

\newpage
{\fab
\appendix
\section{Sum rules for particular potentials}
\label{sec:append}
We next recall several sum rules established in \cite{breuersizei2018} and explicitly computed in \cite{du2023}, but we present them here in a different form, relying on a large deviation principle (LDP) involving entropies.
\subsection{The $(1,0)$ case.} The potential is $\V(z) = \mathrm{Re}(z)$. Since
\begin{equation}
 \label{eq:potentialcomputation10}
\tr\,\V\left(\mathcal G_L(\alpha)\right)=\tr\,\mathcal G_L(\alpha) = \sum_{k= -1}^{L-2} -\alpha_k \bar\alpha_{k+1}= \bar\alpha_0  
- \sum_{k=0}^{L-2} \alpha_k \bar\alpha_{k+1}
\end{equation} 
we have 
\begin{align}
F_-^{1,0} (\alpha_0) =\mathrm{Re}( \alpha_0)\ , \ 
G^{1,0}(\alpha_{[j,j+1]}) = - \mathrm{Re} (\alpha_j\bar\alpha_{j+1})\,.
\end{align}

Recall the following gem and sum rule  both given in
\cite[Theorem 2.8.1]{simonopuc1} and \cite[Theorem 4.5]{du2023}. Here,  the left hand side is reformulated with the help of \eqref{Hvalue}. We have,
 \begin{align}
 \label{gem10}
 \mathcal K(\mu^{1,0}| \mu) < \infty\Longleftrightarrow \sum_{k=1}^\infty |\alpha_{k+1} - \alpha_{k}|^2 + |\alpha_k|^4 < \infty\, ,
 \end{align}
 and more precisely
\begin{mdframed}
 \begin{align}
 \label{sr10}
 \mathcal K(\mu^{1,0}| \mu) = \frac{1}{2} - \log 2
 + \frac{1}{2}\sum_{k=0}^\infty |\alpha_k - \alpha_{k-1}|^2  +\sum_{k=0}^\infty\left( -\log(1-|\alpha_k|^2) - |\alpha_k|^2\right) .
\end{align}
\end{mdframed} 

\subsection{The $(1,1)$ case.} 
The potential is $\V(z) = \tfrac{1}{2}\mathrm{Re}(z^2)$. 
To begin with note that,
\begin{eqnarray}
\tr\,\mathcal G_L(\alpha)^2 &=&\sum_{k=0}^{L-2}  -2 \alpha_{k-1}\bar\alpha_{k+1} +2 \alpha_{k-1}|\alpha_k|^2\bar\alpha_{k+1} + \alpha_{k-1}^2 (\bar\alpha_k)^2 \nonumber\\
&=& 2\bar\alpha_0 -2 |\alpha_0|^2\bar\alpha_1+ (\bar\alpha_0)^2 + \sum_{k=0}^{L-3}
-2\alpha_k\bar\alpha_{k+2} +2\alpha_k|\alpha_{k+1}|^2\bar\alpha_{k+2} +\alpha_k^2(\bar\alpha_{k+1})^2 , 
\label{eq:potentialcomputation11}
\end{eqnarray}
so that 
\begin{align*}
F_-^{1,1}(\alpha_{[1]}) &=\mathrm{Re} (\alpha_0)-|\alpha_0|^2\mathrm{Re}(\alpha_1)+ \frac{1}{2} \mathrm{Re}(\alpha_0)^2 \\ 
G^{1,1}(\alpha_{[j,j+2]}) &= 
-\mathrm{Re}(\alpha_j\bar\alpha_{j+2})(1 - |\alpha_{j+1}|^2) + \frac{1}{2}\mathrm{Re}\left((\alpha_j\bar\alpha_{j+1})^2\right)\,.
\end{align*}
By \eqref{Hvalue}, we have,
\begin{align}
\mathcal K(\mu^{1,1}|\mu) & =\mathcal K(\mu^{1,1}|\UNIF) - 2\int_0^{2\pi} (1-\cos^2 \theta) \log w(\theta) \frac{\ddd \theta}{2\pi} \notag \\ 
& = 1 - \log 2 - 2\int_0^{2\pi} (1-\cos^2 \theta) \log w(\theta) \frac{\ddd \theta}{2\pi} ,
\end{align}
leading to the following gem and sum rule (see \cite[Theorem 4.7]{du2023}), 
 \begin{align}
 \label{gem11}
 \mathcal K(\mu^{1,1}| \mu) < \infty \Longleftrightarrow \sum_{k=1}^\infty |\alpha_{k+2} - \alpha_{k}|^2 + |\alpha_k|^4 < \infty\,,
 \end{align}
 and more precisely
\begin{mdframed}
\begin{align}
\notag
 \mathcal K(\mu^{1,1}| \mu) &= \frac{1}{4}
 - \log 2  +\sum_{k=0}^\infty\left( -\log(1-|\alpha_k|^2) - |\alpha_k|^2 -\frac{1}{2}|\alpha_k|^4\right)\\
 \label{sr11}
 &\quad + \sum_{k=0}^\infty |\alpha_k\alpha_{k-1}|^2 + \frac{1}{2}\sum_{k=0}^\infty (1-|\alpha_k|^2) |\alpha_{k+1}-\alpha_{k-1}|^2
\\
\notag
&\quad + \frac{1}{8} \sum_{k=0}^\infty \left(\left(2|\alpha_k|^2 - |\alpha_k - \alpha_{k-1}|^2\right)^2 + \left(2|\alpha_{k-1}|^2 - |\alpha_k - \alpha_{k-1}|^2\right)^2\right).
\end{align}
\end{mdframed}

\subsection{The $(2,0)$ case.} Here, we use that 
 \begin{equation}
 \label{eq:gpotential20}
 \V^{2,0} = -\frac{1}{3}\V^{1,1} + \frac{4}{3} \V^{1,0}\,,
 \end{equation}
so we  easily get
\begin{align}
\notag
F_-^{2,0}(\alpha_0, \alpha_1) &= -\frac{1}{3}F_-^{1,1}(\alpha_0, \alpha_1) + \frac{4}{3}F^{1,0}_- (\alpha_0)\\
G^{2,0}(\alpha_{[j, j+2]}) &=  -\frac{1}{3}G^{1,1}(\alpha_{[j,j+2]})
+ \frac{4}{3}G^{1,0} (\alpha_{[j,j+1]} )\,.
\end{align}
So that, using \eqref{g=0}, we get
\begin{align}
\notag \mathcal K(\mu^{2,0}|\mu) & = \mathcal K(\mu^{2,0}|\UNIF) - \frac{2}{3}\int_0^{2\pi} (1 - \cos \theta)^2 \log w(\theta) \frac{\ddd \theta}{2\pi} \\
& = \frac{7}{3} -\log 6 - \frac{2}{3}\int_0^{2\pi} (1 - \cos \theta)^2 \log w(\theta) \frac{\ddd \theta}{2\pi} \,,
\end{align}
implying the following gem and sum rule (borrowed from \cite[Theorem 4.8]{du2023}),

\begin{align}
\label{gem20}
\mathcal K(\mu^{2,0}|\mu) <\infty \ \Longleftrightarrow \ \sum_{k=0}^\infty\left(|\alpha_{k+2}-2\alpha_{k+1} +\alpha_k|^2 + |\alpha_k|^6\right) <\infty\, ,
\end{align}
\begin{mdframed}
\begin{align}
 \notag
 \mathcal K(\mu^{2,0}|\mu) &=\frac{19}{12} - \log 6 +\sum_{k=0}^\infty\left( -\log(1-|\alpha_k|^2) - |\alpha_k|^2 -\frac{1}{2}|\alpha_k|^4\right)\\
 \notag
 &\quad +\frac{1}{6}\sum_{k=0}^\infty \left(|\alpha_k|^2 - |\alpha_{k-1}|^2\right)^2\\
 \notag
 &\quad +\frac{1}{6}\sum_{k=0}^\infty (1- |\alpha_k|^2) |\alpha_{k+1}-2\alpha_k + \alpha_{k-1}|^2\\
 \label{sr20}
 &\quad+ \frac{1}{12}\sum_{k=0}^\infty\left(6 |\alpha_k|^2 + 
 6 |\alpha_{k-1}|^2-|\alpha_k -\alpha_{k-1}|^2\right)|\alpha_k -\alpha_{k-1}|^2\,.
\end{align}
\end{mdframed}
\section{Computation and proofs in the ungapped case}
\label{append:ungaped}
\subsection{The (1,0) case: Gross-Witten ensemble}
\label{sec:GWi}
%
\label{potGW}
Note that in comparison with \cite{gamboanag2017,gamboanag2023}, here the sign of $\g$ is different. Notice further that this is consistent with the ungapped case treated in \cite{simonopuc1}. Often, only the case $\g\geq 0$ is considered, changing the parameter sign is equivalent to the change  $U\mapsto -U$. 
If $\g =0$ we recover the CUE and the V-coefficients are independent. 
If $\g \neq 0$, we loose independence.  More precisely, the joint distribution of the V-coefficients is given by (\ref{krishnaT}) with 
\begin{align*}
\tr\, \V_\g^{1,0}(\mathcal G_n) =  \g \mathrm{Re} \left(\alpha_0 - \sum_{k=0}^{n-1} \alpha_k \bar \alpha_{k-1}\right)\,.
\end{align*}
For $|\g| \leq 1$ we are in the ungapped or strongly coupled phase. The equilibrium measure is the Gross-Witten measure $\GW_\g$, we also use the notation $\mu_\g^{1,0}=\GW_\g$. It is supported by $\mathbb T$, with density 
\begin{align}
\label{GW-}
\rho_\g^{1,0}( e^{i\theta}) 
&= \frac{1}{2\pi} (1 - \g \cos \theta)\,.
\end{align}
The only nontrivial moments are of order  $\pm 1$ and the V-coefficients are
\begin{equation}
\label{alphalimGW}
\alpha_n^\g = \begin{cases}\displaystyle  -\frac{x_+ - x_-}{x_+^{n+2} - x_-^{n}} & \hbox{if} \ |\g| < 1\\
\displaystyle \frac{-1}{n+2}& \hbox{if} \ |\g| = 1\,,
\end{cases}
\end{equation}
where $x_\pm = \g^{-1} \pm \sqrt{\g^{-2}-1}$ (see Simon \cite[p. 86]{simonopuc1}).

The first sum rule relative to the Gross-Witten measure $\GW_\g$ was discovered by Simon for $\g = 1$ \cite[Theorem 2.8.1]{simonopuc1}. The extension to $ |\g| \leq 1$ can also be found in \cite[Corollary 5.4]{gamboanag2017}.
A computation of $H(\g)$ is in \eqref{Hvalue}. 


\subsection{The (1,1) case}

Notice that  the potential is invariant by $\theta \mapsto 2\pi -\theta$, by $\theta \mapsto \pi -\theta$ and also by the simultaneous change $\g \mapsto - \g\  ,\ \theta \mapsto \frac{\pi}{2} + \theta$. It is then enough to consider $\g\geq  0$.
Let give the proof on Theorem \ref{th:ungap} in this case.
We make use of the fact that for $\g \in [0,1]$
\begin{align}\label{linearcomb}
\mu_\g^{1,1} = (1-\g)\UNIF + \g \mu_{1}^{1,1} 
\end{align}
and combine the sum rules relative to the uniform measure and to $\mu_{1}^{1,1}$. It follows from \cite[Proposition 8.2]{gamboanag2023} that 
\begin{align*}
\mathcal K(\mu_\g^{1,1} |\mu) & = (1- \g )\mathcal K(\UNIF |\mu) +  \g  \mathcal K (\mu_{1}^{1,1}|\mu) \\
& \quad - (1- \g ) \mathcal K(\UNIF |\mu_\g^{1,1}) -  \g  \mathcal K(\mu_{1}^{1,1}|\mu_\g^{1,1})\notag  .
\end{align*}
The last two divergences are finite, so that $\mathcal K(\mu_\g^{1,1} |\mu)$ is finite if and only if both $\mathcal K(\UNIF |\mu)<\infty $ and $\mathcal K (\mu_{1}^{1,1}|\mu)<\infty$. 
The first condition is equivalent to $\alpha \in \ell^2$ by \eqref{SVsum}, the second one is weaker by (\ref{gem10}). 
Combining the Szeg\H{o}-Verblunsky sum rule \eqref{SVsum} and (\ref{sr11}), we get  
\begin{align}
\notag
\mathcal K(\mu_\g^{1,1}| \mu)&= (1-\g)\sum_{k=0}^\infty -\log(1-|\alpha_k|^2) + \g\left(\frac{1}{4} - \log 2\right)\\
\notag
&+ \g \sum_{k=0}^\infty\left( -\log(1-|\alpha_k|^2) - |\alpha_k|^2 -\frac{1}{2}|\alpha_k|^4\right)\\
 \label{sr11g}
 &+ \g \sum_{k=0}^\infty |\alpha_k\alpha_{k-1}|^2 + \frac{\g}{2}\sum_{k=0}^\infty (1-|\alpha_k|^2) |\alpha_{k+1}-\alpha_{k-1}|^2
\\
\notag
&+  \frac{\g}{8} \sum_{k=0}^\infty \left(\left(2|\alpha_k|^2 - |\alpha_k - \alpha_{k-1}|^2\right)^2 + \left(2|\alpha_{k-1}|^2 - |\alpha_k - \alpha_{k-1}|^2\right)^2\right)\\
\notag
&- (1- \g ) \mathcal K(\UNIF\mid\mu_\g^{1,1}) - \g  \mathcal K(\mu_1^{1,1}\mid\mu_\g^{1,1})\,.
\end{align}
It remains to apply  (\ref{522}) and (\ref{522'}). 
\qed
%
%
%
\subsection{The (2,0) case}
Let us begin with an interesting lemma concerning the support of
$\mu_\g^{2,0}$. 
\begin{lemma}
\label{L62}
For $\g \in [-3/5, 1]$ the equilibrium measure  $\mu_\g^{2,0}$ has full support with density 
\begin{align*} 
\rho^{2,0}_\g
 (e^{i\theta}) = \frac{1 }{2\pi}\left(1 - \frac{4\g}{3}\cos\theta + \frac{\g}{3}\cos 2\theta\right)\,.
\end{align*}
\end{lemma}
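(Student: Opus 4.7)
The plan is to apply Lemma \ref{lemps} to the symmetric Laurent polynomial potential
$$\mathcal{V}_\g^{2,0}(e^{i\theta}) = \frac{4\g}{3}\cos\theta - \frac{\g}{6}\cos 2\theta,$$
identify the coefficients in the normal form \eqref{even} as $v_1 = \tfrac{4\g}{3}$ and $v_2 = -\tfrac{\g}{6}$, and then verify that the resulting candidate density is nonnegative precisely for $\g \in [-3/5, 1]$. Plugging these coefficients into Lemma \ref{lemps} yields the formula
$$\rho_\g^{2,0}(e^{i\theta}) = \frac{1}{2\pi}\left(1 - v_1 \cos\theta - 2v_2 \cos 2\theta\right) = \frac{1}{2\pi}\left(1 - \frac{4\g}{3}\cos\theta + \frac{\g}{3}\cos 2\theta\right),$$
which is exactly the expression claimed. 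Hence the only real content of the proof is the nonnegativity check; once this is established, Lemma \ref{lemps} guarantees that the above expression is the density of the equilibrium measure and that its support is the full circle.

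For the nonnegativity, I would substitute $x = \cos\theta \in [-1,1]$ and use $\cos 2\theta = 2x^2-1$ to rewrite the expression (up to the factor $1/(2\pi)$) as the quadratic
$$f(x) := \frac{2\g}{3}x^2 - \frac{4\g}{3}x + 1 - \frac{\g}{3}.$$
The task reduces to showing $f(x) \geq 0$ on $[-1,1]$ exactly when $\g \in [-3/5, 1]$. I would split into the two cases according to the sign of $\g$.

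For $\g > 0$, $f$ is convex with vertex at $x^\star = 1$, so its minimum on $[-1,1]$ equals $f(1) = 1 - \g$, yielding the constraint $\g \leq 1$. For $\g < 0$, $f$ is concave so its minimum on $[-1,1]$ is attained at an endpoint; since $f(1) = 1-\g > 0$ automatically, the binding constraint comes from $f(-1) = 1 + \tfrac{5\g}{3} \geq 0$, i.e.\ $\g \geq -3/5$. The case $\g = 0$ is trivial. This gives exactly the range $[-3/5, 1]$ announced.

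I do not anticipate a significant obstacle here: the only mild subtlety is ensuring that the boundary values $\g = 1$ and $\g = -3/5$ are included, but these correspond to the density vanishing only at a single point on $\mathbb{T}$ (at $\theta = 0$ and $\theta = \pi$ respectively), so the support is still the full circle, which is consistent with the transition to the one-cut (resp.\ gapped) regimes analyzed in Section \ref{sec:one-par-gapped}. Combined with the uniqueness of the equilibrium measure guaranteed under assumption (A1), this concludes the proof.
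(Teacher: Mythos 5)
Your proof is correct and follows essentially the same route as the paper: apply Lemma \ref{lemps}, reduce to nonnegativity of the quadratic $f(c) = \tfrac{2\g}{3}c^2 - \tfrac{4\g}{3}c + 1 - \tfrac{\g}{3}$ on $[-1,1]$, and determine the range of $\g$. The only difference is cosmetic: the paper computes the roots $c_\pm = 1 \pm \sqrt{\tfrac{3}{2}(1-\g^{-1})}$ explicitly and argues via their location, while you use convexity/concavity together with the fact that the vertex sits at $x^\star=1$, which is arguably a cleaner and more robust way to locate the minimum.
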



\begin{proof}
We apply Lemma \ref{lemps}. 
Set 
\[1 - \frac{4\g}{3}\cos\theta + \frac{\g}{3}\cos 2\theta =:  f_\g (\cos \theta)\]
where
\[f_\g (c) = 1 - \frac{\g}{3} - \frac{4\g}{3} c+ \frac{2\g}{3}c^2\]
is an order two polynomial. If $\g \in (0,1)$, then $f_\g$  has no roots and is positive on $[-1,1]$. 
If $\g \notin [0,1]$, $f_\g$ has two roots
 \[c_\pm = 1 \pm \sqrt{\frac{3}{2}\left(1 - \g^{-1}\right)}\,,\]
with $c_- < 1 < c_+$.
 There is a sign change of $f_\g$ in $[-1, 1)$ if and only if $c_- \in [-1,1)$, i.e. $\g<-3/5$. 
This result is consistent with \cite{houart1990}. \qed
\end{proof} 
\medskip

%
\noindent
Let  now give the proof on Theorem \ref{th:ungap} in this case.
%
We again use that the equilibrium measure is a linear combination 
\begin{align}\label{linearcomb1}
\mu_\g^{2,0} = (1-\g)\UNIF + \g \mu^{2,0}_1  
\end{align}
for $\g \in [0,1]$ and combine the sum rules relative to the uniform measure and to $\mu_{1}^{2,0}$. The left hand side is by \cite[Proposition 8.2]{gamboanag2023}  
\begin{align*}
\mathcal K(\mu_\g^{2,0} |\mu) & = (1- \g )\mathcal K(\UNIF |\mu) +  \g  \mathcal K (\mu_{1}^{2,0}|\mu) \\
& \quad - (1- \g ) \mathcal K(\UNIF |\mu_\g^{2,0}) -  \g  \mathcal K(\mu_{1}^{2,0}|\mu_\g^{2,0})\notag  ,
\end{align*}
again with the last two divergences finite. Finiteness of $\mathcal K(\UNIF |\mu)$ is equivalent to $\alpha \in \ell^2$ by \eqref{SVsum}, which then implies finiteness of $K (\mu_{1}^{2,0}|\mu)$ by  (\ref{gem20}).  
We then combine the Szeg\H{o}-Verblunsky sum rule \eqref{SVsum} and the sum rule (\ref{sr20}). The remaining divergences are given by   
\begin{align*}
\mathcal K(\UNIF | \mu_\g^{2,0}) &= - \int_0^{2\pi} \log\left(1 - \frac{4\g}{3} \cos\theta + \frac{\g}{3}\cos 2\theta\right) \frac{\ddd \theta}{2\pi}= - K(\g)\,.
\end{align*}
and
\begin{align*}
\notag
\mathcal K(\mu^{2,0}|\mu_\g^{2,0})&= \int_0^{2\pi} \log \frac{1 - \frac{4}{3} \cos\theta + \frac{1}{3}\cos 2\theta}{1- \frac{4\g}{3} \cos\theta + \frac{\g}{3}\cos 2\theta}(1- \frac{4}{3} \cos\theta + \frac{1}{3}\cos 2\theta)\frac{\ddd \theta}{2\pi}\\
&= K(1) - K(\g) + I(\g)= -\log 6 
-K(\g)+ I(\g)
\end{align*}
with
\begin{align*}
I(\g) &=\int_0^{2\pi} \log \frac{1 - \frac{4}{3} \cos\theta + \frac{1}{3}\cos 2\theta}{1- \frac{4\g}{3} \cos\theta + \frac{\g}{3}\cos 2\theta}\left(- \frac{4}{3} \cos\theta + \frac{1}{3}\cos 2\theta\right)\frac{\ddd \theta}{2\pi} .
\end{align*}
\qed
\section{Some useful entropies}
\label{appendix}
\begin{lemma}
\label{LLL}
For relative entropies with respect to $\UNIF$ we have
\begin{align}
\label{g=-1}
\mathcal K(\mu^{1,0}\mid\UNIF) &= 1- \log 2\\
\label{g=-1bis}
 \mathcal K(\mu^{1,1}\mid\UNIF) &= 1 - \log 2\\
 \label{g=0}
\mathcal K(\mu^{2,0}\mid \UNIF) &= \frac{7}{3} -
\log 6\, . 
\end{align}
Furthermore, we have for $|\g|\leq 1$
\begin{align}
\label{522}
\mathcal K(\UNIF \mid \mu_\g^{1,1}) &= - \log \left(\frac{1+ \sqrt{1-\g^2}}{2}\right)\,,\\
\label{522'}
\mathcal K(\mu_{1}^{1,1}\mid \mu_\g^{1,1}) &= 1 - \log 2 -  \log \frac{1+ \sqrt{1-\g^2}}{2} - \frac{\g}{1+\sqrt{1-\g^2}}\,,\\
\mathcal K(\UNIF\mid \mu_\g^{2,0}) 
 &= - \log\left(\frac{|\g|(1+\alpha)}{6(1-\alpha)}\right)\mathbbm{1}_{\{\g\neq 0\}} =:-K(\g) \,
 \end{align} 
 where 
\begin{align}
\label{defalpha1}\alpha= \sqrt{\frac{-a + \sqrt{a^2+4a}}{2}} \ , \  a= \frac{3}{2}(\g^{-1}-1)\,.
\end{align}
\end{lemma}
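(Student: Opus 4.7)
The entire lemma is a collection of explicit evaluations of Kullback--Leibler divergences between probability measures on $\T$ whose densities with respect to $\UNIF$ are trigonometric polynomials. My overall plan is to reduce each integral to a finite sum of Fourier coefficients, using the classical expansion
\[
\log|1-re^{i\theta}|^2 \;=\; -2\sum_{k=1}^\infty \frac{r^k}{k}\cos(k\theta),\qquad |r|\leq 1,
\]
combined with the orthogonality of $\{\cos(k\theta)\}_{k\geq 0}$ in $L^2(\ddd\theta/(2\pi))$.

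For \eqref{g=-1}, \eqref{g=-1bis} and \eqref{g=0}, the density $g^{m,n}$ of $\mu^{m,n}$ with respect to $\UNIF$ is an explicit trigonometric polynomial (e.g.\ $1-\cos\theta$ for $(1,0)$ and $1 - \tfrac{4}{3}\cos\theta + \tfrac{1}{3}\cos 2\theta$ for $(2,0)$). Writing the density as a positive constant times a power of $(1-\cos\theta)$ and using the identity $\log(1-\cos\theta) = -\log 2 - 2\sum_{k\geq 1}\cos(k\theta)/k$ (itself a consequence of $\int\log|1-e^{i\theta}|\,\ddd\theta/(2\pi)=0$), the divergence $\int g^{m,n}\log g^{m,n}\,\ddd\theta/(2\pi)$ reduces to a finite linear combination of Fourier coefficients of $g^{m,n}$. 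For $(1,1)$ the substitution $\phi = 2\theta$ reduces the integral to the $(1,0)$ case.

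For \eqref{522} and \eqref{522'} I would use the Fej\'er--Riesz factorization
\[
1 - \g\cos\phi \;=\; \frac{|1-re^{i\phi}|^2}{1+r^2},\qquad r = \frac{1-\sqrt{1-\g^2}}{\g},\qquad 1+r^2 = \frac{2}{1+\sqrt{1-\g^2}},
\]
valid for $|\g|\leq 1$. The vanishing of the zeroth Fourier coefficient of $\log|1-re^{i\phi}|^2$ then gives $\int\log(1-\g\cos\phi)\,\ddd\phi/(2\pi) = -\log(1+r^2)$, and after the substitution $\phi=2\theta$ this yields \eqref{522}. For \eqref{522'} I would additionally need $\int\cos\phi\log|1-re^{i\phi}|^2\,\ddd\phi/(2\pi) = -r$ together with the algebraic identity $r = \g/(1+\sqrt{1-\g^2})$ in order to extract the extra summand.

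The main obstacle is the last identity, since $g_\g^{2,0}$ is a genuine degree-$2$ positive trigonometric polynomial for which no single M\"obius factorization is available. My plan is to rewrite
\[
g_\g^{2,0}(\theta) \;=\; \frac{2\g}{3}\bigl((1-\cos\theta)^2 + a\bigr) \;=\; \frac{2\g}{3}\,\bigl|(1-\cos\theta)+i\sqrt{a}\bigr|^2,\qquad a = \tfrac{3}{2}(\g^{-1}-1),
\]
and to observe that $(1-\cos\theta)+i\sqrt{a} = -\frac{1}{2e^{i\theta}}Q(e^{i\theta})$ with $Q(z) = z^2 - 2(1-i\sqrt{a})z + 1$. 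Since $Q(0)=1$, Jensen's formula gives $\int\log|Q(e^{i\theta})|\,\ddd\theta/(2\pi) = \log|\lambda|$, where $\lambda$ is the root of $Q$ lying outside $\mathbb{D}$. The remaining technical step is to show $|\lambda|^2 = (1+\alpha)/(1-\alpha)$. Parametrizing $\lambda = r^{-1}e^{-i\phi}$ and its reciprocal as $re^{i\phi}$ with $r\in(0,1)$, Vi\`ete's relations produce the system $(r+r^{-1})\cos\phi = 2$, $(r^{-1}-r)\sin\phi = 2\sqrt{a}$; eliminating $\phi$ yields a quadratic in $s := r^2 + r^{-2}$ with admissible root $s = 2(1+a) + 2\sqrt{a^2+4a}$. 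The key algebraic identity $a = \alpha^4/(1-\alpha^2)$, which follows directly from the definition of $\alpha$, simplifies this to $s = 2(1+\alpha^2)/(1-\alpha^2)$, and hence $|\lambda|^2 = 1/r^2 = (s+\sqrt{s^2-4})/2 = (1+\alpha)/(1-\alpha)$, as required.
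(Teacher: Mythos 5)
Your proposal is correct and follows the same underlying strategy as the paper: write each trigonometric density as a positive constant times a squared modulus $|Q(e^{i\theta})|^2$ of a polynomial with $Q(0)=1$, then use Jensen's formula (equivalently, the vanishing of the zeroth Fourier coefficient of $\log|1-re^{i\theta}|$) to kill the logarithm, reducing everything to finitely many Fourier coefficients. The minor differences are in the packaging: the paper derives the auxiliary integrals (\ref{logg}), (\ref{logcosg}) by residue calculus and for the $(2,0)$ case factors $g_\g^{2,0}$ through the degree-$4$ polynomial $h(z)=(z-1)^4+4az^2$, writing its roots $(1\pm\alpha)(1\pm i\beta)$ explicitly, whereas you expand $\log(1-\cos\theta)$ as a cosine series, factor through the degree-$2$ polynomial $Q$ (so $h=Q\bar Q$), and determine $|\lambda|^2$ indirectly via Vi\`ete and the identity $a=\alpha^4/(1-\alpha^2)$. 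Both computations land on the same value $(1+\alpha)/(1-\alpha)$; your Vi\`ete route is a clean alternative to writing out the roots. One small slip: with $Q(z)=z^2-2(1-i\sqrt a)z+1$ you get $-\tfrac{1}{2e^{i\theta}}Q(e^{i\theta})=(1-\cos\theta)-i\sqrt a$ rather than $(1-\cos\theta)+i\sqrt a$; this affects nothing since only $|Q(e^{i\theta})|$ enters Jensen's formula, but the sign in the definition of $Q$ should be flipped for consistency.
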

 
 \begin{proof}
 (\ref{g=-1}) is just $H(1)$ in (\ref{Hvalue}). So is (\ref{g=-1bis}) after the change of variable $\varphi = 2\theta$. 
 To prove (\ref{g=0}), notice that
\begin{align*}
\notag
\mathcal K(\mu^{2,0}\mid \UNIF) &= \int_0^{2\pi}\log \left(1- \frac{4}{3} \cos\theta + \frac{1}{3}\cos 2\theta\right) \frac{\ddd \theta}{2\pi}  -\tilde I(1)  \\
& = \log \frac{2 }{3} + 2 \int_0^{2\pi}\log (1- \cos\theta) \frac{\ddd \theta}{2\pi}  -\tilde I(1) ,
\end{align*}
with $\tilde I(1)$ as in \eqref{Ig1}. It remains to apply \eqref{logg}.

For (\ref{522}), apply the change of variable $\varphi = 2\theta$ and \eqref{logg}.
For (\ref{522'}), apply the same change of variable,  (\ref{logg}) and (\ref{logcosg}). 
To compute $K(\g)= - \mathcal K(\UNIF\mid \mu_\g^{2,0})$ notice that
\begin{align*}
\mathcal K(\UNIF\mid \mu_\g^{2,0}) = -\int_0^{2\pi} \log\left| \frac{\g}{6}h(e^{i\theta})\right| \frac{\ddd \theta}{2\pi}\,,
\end{align*}
with $h$ as in \eqref{defha}. Jensen's formula gives
\begin{align}
\notag
K(\g)&= \log |\g/6|-\log |\zeta_1| - \log |\bar \zeta_1|
=  \log |\g/6| - [\log (1-\alpha)^2 + \log(1 + \beta^2)]\\
\label{Jensen}
&= \log|\g/6| - [2\log (1-\alpha) + \log (1-\alpha^2)]= \log\frac{|\g|(1+\alpha)}{6(1-\alpha)}\,.
\end{align}
\qed
\end{proof}
\section{Auxiliary computations}
\label{sec:computations}

\subsection{Some auxiliary integrals}

The following integral is an easy consequence of Jensen's formula (see \cite[p. 138]{simonopuc1}): 
for $|\g| \leq 1$,
\begin{align}
\label{logg}
\int_0^{2\pi} \log (1 - \g \cos\theta) \frac{\ddd \theta}{2\pi} &= \log \frac{1+\sqrt{1-\g^2}}{2}\,.
\end{align}
Also the following integral is well-known (see e.g. \cite[Prop. 3.1]{breuersizei2018}): for $n\neq 0$,  
\begin{align}
\label{eipi}
\int_0^{2\pi} e^{in \theta}\log (1- \cos \theta) \frac{\dd \theta}{2\pi} = - |n|^{-1}  .
\end{align}
Let us give a third integral used many times in the computation of entropies.

\begin{lemma}
For $|\g|\leq 1$, we have
\begin{align}
\label{logcosg}
\int_0^{2\pi} \cos \theta \log (1- \g \cos\theta) \frac{\ddd \theta}{2\pi}
= -\frac{\g}{1+\sqrt{1-\g^2}}\,.
\end{align}
\end{lemma}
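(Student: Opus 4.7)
The plan is to reduce the target integral to the known formula \eqref{logg} via integration by parts. Set $u=\log(1-\g\cos\theta)$ and $dv=\cos\theta\,d\theta$, so that $du=\frac{\g\sin\theta}{1-\g\cos\theta}d\theta$ and $v=\sin\theta$. Since both factors are $2\pi$-periodic, the boundary terms at $0$ and $2\pi$ cancel, yielding
\begin{equation*}
\int_0^{2\pi}\cos\theta\log(1-\g\cos\theta)\frac{\ddd\theta}{2\pi}
= -\g\int_0^{2\pi}\frac{\sin^2\theta}{1-\g\cos\theta}\frac{\ddd\theta}{2\pi}.
\end{equation*}

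Next I would reduce the right-hand side to the Poisson-type kernel $\int_0^{2\pi}\frac{\ddd\theta}{2\pi(1-\g\cos\theta)}=\frac{1}{\sqrt{1-\g^2}}$, which itself follows at once from \eqref{logg} by differentiation in $\g$. To do so, use the algebraic identity (writing $\sin^2\theta=1-\cos^2\theta$ and $\g\cos\theta=1-(1-\g\cos\theta)$)
\begin{equation*}
\frac{\g\sin^2\theta}{1-\g\cos\theta}
= \frac{\g^2-1}{\g(1-\g\cos\theta)} + \frac{1}{\g} + \cos\theta,
\end{equation*}
which is verified by multiplying out. Integrating term by term over $[0,2\pi]$ with respect to $\frac{\ddd\theta}{2\pi}$ and using that $\int\cos\theta \frac{\ddd\theta}{2\pi}=0$ gives
\begin{equation*}
\g\int_0^{2\pi}\frac{\sin^2\theta}{1-\g\cos\theta}\frac{\ddd\theta}{2\pi}
= \frac{\g^2-1}{\g}\cdot\frac{1}{\sqrt{1-\g^2}} + \frac{1}{\g}
= \frac{1-\sqrt{1-\g^2}}{\g}.
\end{equation*}
Finally, I would rationalize, using $(1-\sqrt{1-\g^2})(1+\sqrt{1-\g^2})=\g^2$, to rewrite the answer as $\g/(1+\sqrt{1-\g^2})$, so that combining with the minus sign from the IBP step yields the claimed value $-\g/(1+\sqrt{1-\g^2})$.

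There is no real obstacle: the only subtlety is keeping track of signs and handling the endpoint $|\g|=1$, which follows by continuity, and the degenerate case $\g=0$, where both sides vanish (as confirmed by expanding $1-\sqrt{1-\g^2}=\g^2/2+O(\g^4)$). An equivalent route would be to differentiate $F(\g):=\int_0^{2\pi}\cos\theta\log(1-\g\cos\theta)\frac{\ddd\theta}{2\pi}$ in $\g$, reduce $\cos^2\theta/(1-\g\cos\theta)$ by the same polynomial trick, integrate back from $F(0)=0$, and verify $\frac{d}{d\g}\bigl(-\g/(1+\sqrt{1-\g^2})\bigr)=-\g^{-2}\bigl((1-\g^2)^{-1/2}-1\bigr)$; this gives the same result but with one extra integration step.
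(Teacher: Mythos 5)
Your proof is correct and, after the identical integration-by-parts step, takes a genuinely different route from the paper's. The paper converts $-\g\int_0^{2\pi}\frac{\sin^2\theta}{1-\g\cos\theta}\frac{\ddd\theta}{2\pi}$ into a contour integral $\oint_\T f(z)\frac{\ddd z}{2i\pi}$ with $f(z)=-\tfrac12\,\frac{(z^2-1)^2}{z^2[z^2-\frac{2}{\g}z+1]}$ and evaluates it by residues at the double pole $0$ and at whichever of $z_\pm=\g^{-1}\pm\sqrt{\g^{-2}-1}$ lies inside $\D$, which forces a case split on the sign of $\g$. Your version instead uses the algebraic identity
\[
\frac{\g\sin^2\theta}{1-\g\cos\theta}=\frac{\g^2-1}{\g(1-\g\cos\theta)}+\frac{1}{\g}+\cos\theta
\]
(which I checked: the right side has common numerator $\g^2(1-\cos^2\theta)$ over $\g(1-\g\cos\theta)$) and integrates term by term against $\frac{\ddd\theta}{2\pi}$, invoking $\int_0^{2\pi}\frac{\ddd\theta}{2\pi(1-\g\cos\theta)}=\frac{1}{\sqrt{1-\g^2}}$. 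This is more elementary — no residue calculus, and no case split on the sign of $\g$ — at the cost of importing the Poisson-kernel integral. One small inaccuracy in your aside: differentiating \eqref{logg} in $\g$ gives $\int_0^{2\pi}\frac{\cos\theta}{1-\g\cos\theta}\frac{\ddd\theta}{2\pi}$, not the Poisson kernel directly; you need the extra rearrangement $\frac{1}{1-\g\cos\theta}=1+\frac{\g\cos\theta}{1-\g\cos\theta}$ to get there. (In fact, once you notice that, you could bypass the Poisson kernel entirely: your IBP reduces the target to $\int\frac{\cos\theta}{1-\g\cos\theta}\frac{\ddd\theta}{2\pi}$ up to a known constant, and that is exactly what differentiating \eqref{logg} yields.) Your handling of the degenerate case $\g=0$ (both sides vanish) and of $|\g|=1$ by continuity (dominated convergence for the left side, direct substitution for the right) is fine.
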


\proof
For $\g=0$, the statement is {\fab obvious}. For $\g\neq 0$, we integrate by parts to obtain
\begin{align*}
\int_0^{2\pi} \cos \theta \log (1- \g \cos\theta) \frac{\ddd \theta}{2\pi} =
 -\int_0^{2\pi} \frac{\g \sin^2 \theta}{1 - \g \cos \theta} \frac{\ddd \theta}{2\pi}
=\oint_\T f(z) \frac{\ddd z}{2i\pi}
\end{align*}
with
\[f(z) :=  \frac{\g}{2}\frac{(z-z^{-1})^2}{z[2- \g(z+z^{-1})]} = -\frac{1}{2} \frac{(z^2-1)^2}{z^2 [z^2-\tfrac{2}{\g}z +1]}\,.\] 
The poles of $f$ are $z_\pm=  \g^{-1} \pm \sqrt{\g^{-2}-1}$ and $0$ (double).
The residue in $0$ of $f$
is $-\g^{-1}$. 
If $\g > 0$ then $z_- \in \mathbb D$ and 
since $z_+z_- = 1$, the residue in $z_- \in \mathbb D$  is
\[-\frac{1}{2} \frac{(z_-^2 -1)^2}{z_-^2 (z_- -z_+)}= -\frac{1}{2}(z_- - z_+)= \sqrt{\g^{-2}-1}\]
so that
\[\oint_\T f(z) \frac{\ddd z}{2i\pi} = -\g^{-1}  + \sqrt{\g^2-1} = \g^{-1} (\sqrt{1-\g^2}-1)
\,. \]
If $\g< 0$ then $z_+ \in \mathbb D$ and the residue in $z_+$ is $-\frac{1}{2}(z_+ - z_-)= -\sqrt{1-\g^{2}}$ so that
\[\oint_\T f(z) \frac{\ddd z}{2i\pi} = -\g^{-1} - \sqrt{1-\g^2} = \g^{-1} (\sqrt{1-\g^2}-1)\,. \]
\QED

The following useful corollary is {\fab an obvious} consequence of (\ref{logg}) and (\ref{logcosg}).

\begin{corollary} 
 For $|\g| \leq 1$, 
 \begin{align}
 H(\g)& :=\int_0^{2\pi} (1 -\g \cos \theta)   \log (1 -\g \cos \theta)\  \frac{\ddd \theta}{2\pi}
 \label{Hvalue}
 =  1- \sqrt{1- \g^2} + \log \frac{1 + \sqrt{1-\g^2}}{2} \,.
  \end{align}
\end{corollary}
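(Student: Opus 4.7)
The plan is to expand the integrand linearly as
\[
(1-\g\cos\theta)\log(1-\g\cos\theta) = \log(1-\g\cos\theta) - \g\cos\theta\log(1-\g\cos\theta),
\]
and to apply the two integrals already established in the preceding computations. Integrating the first piece gives $\log\tfrac{1+\sqrt{1-\g^2}}{2}$ by \eqref{logg} (a direct consequence of Jensen's formula), and the second piece, after multiplication by $-\g$, contributes $+\frac{\g^2}{1+\sqrt{1-\g^2}}$ by \eqref{logcosg}.

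The remaining step is a purely algebraic simplification of the rational term: multiplying numerator and denominator of $\frac{\g^2}{1+\sqrt{1-\g^2}}$ by $1-\sqrt{1-\g^2}$ and using the identity $(1+u)(1-u) = 1-u^2$ with $u = \sqrt{1-\g^2}$ yields
\[
\frac{\g^2}{1+\sqrt{1-\g^2}} = \frac{\g^2(1-\sqrt{1-\g^2})}{\g^2} = 1-\sqrt{1-\g^2}.
\]
Adding this to the logarithmic term produces the claimed closed form $H(\g) = 1-\sqrt{1-\g^2} + \log\tfrac{1+\sqrt{1-\g^2}}{2}$.

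There is no real obstacle, as the author already flags this as an immediate corollary; both nontrivial integrals have been handled in the preceding lemma and its contour-integration argument. The only minor point worth recording is that the formula extends continuously to the endpoints $|\g| = 1$ (giving $H(\pm 1) = 1-\log 2$, matching \eqref{g=-1}) and to $\g = 0$ (giving $H(0) = 0$), which provides a sanity check on the simplification.
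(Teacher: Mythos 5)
Your proof is correct and takes exactly the route the paper intends: it calls the corollary an ``obvious consequence'' of \eqref{logg} and \eqref{logcosg}, and you have simply spelled out the linear expansion and the algebraic simplification $\frac{\g^2}{1+\sqrt{1-\g^2}} = 1-\sqrt{1-\g^2}$. The endpoint sanity checks at $\g=0,\pm1$ are a nice touch but not needed.
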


 \begin{lemma}
\label{LI}
Let \begin{align*}
I(\g) &=\int_0^{2\pi} \log \frac{1 - \frac{4}{3} \cos\theta + \frac{1}{3}\cos 2\theta}{1- \frac{4\g}{3} \cos\theta + \frac{\g}{3}\cos 2\theta}\left(- \frac{4}{3} \cos\theta + \frac{1}{3}\cos 2\theta\right)\frac{\ddd \theta}{2\pi}
\end{align*}
then for $0<\g\leq 1$ we have 
\begin{align}
\label{Ig}
I(\g) = \frac{2\alpha(\alpha^2+3\alpha +3)}{3(\alpha +1)}\,.
\end{align}
\end{lemma}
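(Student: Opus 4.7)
The plan is to split the logarithm: setting $q(\theta) := -\tfrac{4}{3}\cos\theta + \tfrac{1}{3}\cos 2\theta$ so that $h_1 := 1+q$ and $h_\g := 1+\g q$ are the densities of $\mu^{2,0}$ and $\mu^{2,0}_\g$ with respect to $\UNIF$, one writes $I(\g) = \int_0^{2\pi} q\log h_1 \,\ddd\theta/(2\pi) - \int_0^{2\pi} q\log h_\g \,\ddd\theta/(2\pi)$. Since $\int q\,\ddd\theta/(2\pi)=0$, the first integral equals $\int(h_1-1)\log h_1\,\ddd\theta/(2\pi) = \mathcal K(\mu^{2,0}|\UNIF)+\mathcal K(\UNIF|\mu^{2,0}) = \tfrac{7}{3}$, using \eqref{g=0} and the $\g=1$ specialization of the formula for $\mathcal K(\UNIF|\mu^{2,0}_\g)$.

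For the second integral, since $q$ only carries frequencies $\pm 1$ and $\pm 2$, only the cosine coefficients $c_n := \int_0^{2\pi}\cos(n\theta)\log h_\g\,\ddd\theta/(2\pi)$ for $n=1,2$ are needed, and $\int q\log h_\g\,\ddd\theta/(2\pi) = -\tfrac{4}{3}c_1 + \tfrac{1}{3}c_2$. These I evaluate by the same Jensen-type technique as in \eqref{Jensen}: the palindromic quartic $P(z) := z^4 - 4z^3 + (6/\g)z^2 - 4z + 1$ satisfying $z^2 h_\g(z) = \tfrac{\g}{6}P(z)$ factors as $(z^2-wz+1)(z^2-\bar w z+1)$ with $w = 2+2i\sqrt a$; its four roots are $\zeta_1,\bar\zeta_1,1/\zeta_1,1/\bar\zeta_1$, where $\zeta_1$ is the root of the first factor inside the unit disk. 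Summing the Fourier expansions of $\log|e^{i\theta}-r|$ over all four roots yields
\begin{align*}
c_n = -\frac{2\,\mathrm{Re}(\zeta_1^n)}{n}, \qquad n\geq 1.
\end{align*}

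Writing $\zeta_1 = x + iy$ with $\rho = |\zeta_1|$, the relation $\zeta_1+1/\zeta_1=w$ forces $x = \frac{2\rho^2}{1+\rho^2}$ and $y = -\frac{2\sqrt a\,\rho^2}{1-\rho^2}$, and $x^2+y^2=\rho^2$ then yields the key identity $(1-\rho^2)^4 = 4a\rho^2(1+\rho^2)^2$. On the other hand, the definition \eqref{defalpha1} gives $\alpha^4 = a(1-\alpha^2)$, and a direct check shows this is equivalent to $\rho^2 = (1-\alpha)/(1+\alpha)$. Substituting $u := \rho^2 = (1-\alpha)/(1+\alpha)$ into the closed forms $c_1 = -\frac{4u}{1+u}$, $c_2 = \frac{u(u^2-6u+1)}{(1+u)^2}$ and simplifying, one finds
\begin{align*}
I(\g) = \frac{7-3u-3u^2-u^3}{3(1+u)^2} = \frac{(1-u)(u^2+4u+7)}{3(1+u)^2}.
\end{align*}
Finally, with $1-u = \frac{2\alpha}{1+\alpha}$ and $1+u = \frac{2}{1+\alpha}$, the factor $u^2+4u+7$ collapses to $\frac{4(\alpha^2+3\alpha+3)}{(1+\alpha)^2}$, and the announced value $\frac{2\alpha(\alpha^2+3\alpha+3)}{3(1+\alpha)}$ drops out.

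The main obstacle I anticipate is the algebraic identification $\rho^2 = (1-\alpha)/(1+\alpha)$: although both sides arise from natural quartic/quadratic equations, matching them requires a short but delicate computation. The sign ambiguity in choosing the inside-disk root via $\sqrt{w^2-4}$ is harmless here since only $x^2$ and $y^2$ enter $c_1$ and $c_2$; and the limiting case $\g=1$ (where $a=0$ and $\rho=1$) is recovered by continuity in $\alpha$.
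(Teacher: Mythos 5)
Your proof is correct, and it takes a genuinely different computational route, even though it ultimately rests on the same factorization of the palindromic quartic $(z-1)^4 + 4az^2 = z^4 - 4z^3 + (6/\g)z^2 - 4z + 1$ used in the paper. Your decomposition $I(\g) = \int q\log h_1 - \int q\log h_\g$ is the same as the paper's split $I(\g) = \tilde I(\g) - \tilde I(1)$ in disguise (since $\tilde I(\g) = -\int q\log h_\g$), but the two pieces are evaluated very differently. For the $\g=1$ piece the paper applies \eqref{eipi} directly to the product form $\frac{2}{3}(1-\cos\theta)^2$, whereas you observe that $\int q\,\ddd\theta/(2\pi)=0$ lets you write $\int q\log h_1 = \int(h_1-1)\log h_1 = \mathcal K(\mu^{2,0}|\UNIF) + \mathcal K(\UNIF|\mu^{2,0}) = \tfrac{7}{3}$, recycling Lemma \ref{LLL}; this is elegant but imports two auxiliary entropy identities. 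For the $\g$-dependent piece the paper integrates by parts to remove the logarithm and then evaluates a contour integral with a triple pole at $0$ and simple poles at $\zeta_1,\bar\zeta_1$, whereas you skip the integration by parts altogether and expand $\log h_\g$ directly as $\log(\g/6)+\sum_j\log|e^{i\theta}-r_j|$ over the four roots $\zeta_1,\bar\zeta_1,1/\zeta_1,1/\bar\zeta_1$, then read off the needed Fourier coefficients $c_1,c_2$ from the classical expansion of $\log|e^{i\theta}-r|$; this avoids the residue bookkeeping entirely. The step you flagged as the main obstacle, $\rho^2 = (1-\alpha)/(1+\alpha)$, is in fact a routine check: plugging $u = (1-\alpha)/(1+\alpha)$ into $(1-u)^4 = 4au(1+u)^2$ reduces to $\alpha^4 = a(1-\alpha^2)$, which is exactly the defining quadratic for $\alpha^2$ from \eqref{defalpha1}; and the correct root branch is forced by $u\in(0,1)$ for $\g\in(0,1)$. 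Both routes are sound; yours trades residue calculus for Fourier/entropy identities and arrives at the same factorizations in $\alpha$.
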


\proof
We have
\begin{align}
\label{718}I(\g) =  \tilde I(\g) - \tilde I(1)\,,
\end{align}
with 
\begin{align*}
\tilde I(\g) = - \int_0^{2\pi}\log \left(1- \frac{4\g}{3} \cos\theta + \frac{\g}{3}\cos 2\theta\right) \left(- \frac{4}{3} \cos\theta + \frac{1}{3}\cos 2\theta\right)\frac{\ddd \theta}{2\pi}
\end{align*}
For $|\g| < 1$ we get by integrating by parts
\begin{align*}
\notag
\tilde I(\g) &= 
\frac{4\g}{9}\int_0^{2\pi}\frac{\sin^2 \theta (1-\cos\theta)(-4+ \cos \theta)}
{\left(1- \frac{4\g}{3} \cos\theta + \frac{\g}{3}\cos 2\theta\right)}\frac{\ddd \theta}{2\pi}\\
\notag
&=\frac{\g}{12}\oint_\T \frac{(z-z^{-1})^2 (2-z-z^{-1}) (8- z - z^{-1})}{\left(3- 2\g(z+z^{-1}) + \frac{\g}{2}(z^2 + z^{-2})\right)} \frac{\ddd z}{2i\pi z}\\
&= \frac{1}{6}\oint_\T \frac{(z^2-1)^2 (2z-z^2-1) (8z- z^2 - 1)}{z^{3}(\tfrac{6}{\g}z^2-4z^3-4z+z^4+1)
} \frac{\ddd z}{2i\pi} 
= \frac{1}{6}\oint_\T f(z) \frac{\ddd z}{2i\pi}\,,
\end{align*}
where 
\begin{align*}
f(z) = \frac{(z^2-1)^2 (2z-z^2-1) (8z- z^2 - 1)}{z^{3}h(z)}
\end{align*}
and $h$ is given by 
\begin{align}
\label{defha}
h(z) = 
(z-1)^4 + 4az^2\ , \ a= \frac{3}{2}(\g^{-1}-1)\,.
\end{align} 
The roots of $h$ are solutions of 
\[(z-1)^2= \pm 2 iz\sqrt a\,,\]
or
\[z^2 -2(1\pm i \sqrt a)z +1 = 0\,.\]
If we set
\begin{align}
\label{defalpha}\alpha= \sqrt{\frac{-a + \sqrt{a^2+4a}}{2}} , \quad 
\beta = \sqrt{\frac{a + \sqrt{a^2+4a}}{2}} = \frac{\sqrt a}{\alpha}
\end{align}
these roots are
\begin{align}
\notag
z_1 = (1+\alpha) (1+i\beta) ,\quad  \zeta_1 =(1-\alpha) (1-i\beta) = \frac{1}{z_1}\,,\\
\label{theroots}
\bar z_1 = (1+\alpha) (1-i\beta)  , \quad \bar \zeta_1 = (1-\alpha)(1+i\beta) = \frac{1}{\bar z_1}\,.
\end{align}
Notice that  $|z_1| > 1$ and $|\zeta_1| < 1$.
The function $f$ has a triple pole in $0$ and the other poles in $\mathbb D$ are the roots $\zeta_1$ and $\bar\zeta_1$ of $h$. For $z$ small, 
\[z^3f(z) = \frac{1- 10 z + 16 z^2 +o(z^2)} {(1-4z+(6+4a)z^2 + o(z^2))}\,.\]
The residue in the triple pole $0$ is then  $-4a-14$. 
Let us compute the residue in $\zeta_1$.
We have
\begin{align*}
(z-\zeta_1)f(z) = \frac{(z - z^{-1})^2 \left(2-(z+z^{-1})\right)\left(8 - (z+z^{-1})\right)}{z^{-1}(z-z_1)(z-\bar z_1)(z-\bar \zeta_1)}\,.
\end{align*}
The numerator, evaluated in $\zeta_1$ is
\begin{align*}
\notag
(\zeta_1 - z_1)^2 \left(2-(\zeta_1+z_1)\right)\left(8 - (\zeta_1+z_1)\right)
=-16i\alpha\beta (\alpha +i \beta)^2 (3- i\alpha\beta)
\end{align*}
and the denominator is
\[\zeta_1^{-1}(\zeta_1- z_1) (\zeta_1 - \bar z_1) (\zeta_1  - \bar \zeta_1) =-8i \alpha\beta(1-\alpha^2)(1+\beta^2)
(\alpha+i\beta) = -8i \alpha\beta 
(\alpha+i\beta)\,,\]
so that the residue is
\begin{align*}
\operatorname{Res}(\zeta_1) = 2(\alpha+i\beta)(3-i\alpha\beta) \,.
\end{align*}
By symmetry,
\begin{align*}
\operatorname{Res}(\zeta_1) + \operatorname{Res}(\bar\zeta_1) = 2\mathrm{Re} \left(2(\alpha+i\beta)(3-i\alpha\beta)\right)
= 4\alpha(3+ \beta^2) = \frac{4\alpha(3-2\alpha^2)}{1-\alpha^2}\,. 
\end{align*}
Taking the sum of the residues we get
\begin{align*}
\notag
\oint_\T f(z) \frac{\ddd z}{2i\pi } &= -4a- 14 + \frac{4\alpha(3-2\alpha^2)}{1-\alpha^2}
=-4\frac{\alpha^4}{1-\alpha^2} -14 +  \frac{4\alpha(3-2\alpha^2)}{1-\alpha^2}\\ &= \frac{2(2\alpha^2 +8\alpha +7)(\alpha-1)}{\alpha+1} \,,
\end{align*}
hence
\begin{align*}
\tilde I(\g) = \frac{(2\alpha^2 +8\alpha +7)(\alpha-1)}{3(\alpha+1)}\,.
\end{align*}
For $\g=1$ we use \eqref{eipi} to obtain
\begin{align} \label{Ig1}
\notag
\tilde I(1) &= -\int_0^{2\pi} \log \left(\tfrac{2}{3} (1-\cos \theta)^2\right)\left(-\frac{4}{3}\cos \theta + \frac{1}{3}\cos 2\theta\right)\frac{\ddd \theta}{2\pi}\\
 &= -2 \int_0^{2\pi} \log (1-\cos \theta)\left(-\frac{4}{3}\cos \theta + \frac{1}{3}\cos 2\theta\right)\frac{\ddd \theta}{2\pi}
=-\frac{7}{3}\,,
\end{align}
which leads to (\ref{Ig}) using (\ref{718}).
\QED

\subsection{Computations for the Gross-Witten sum rule in the gapped case}
\label{app:remark}

In this section, we evaluate the sum rule with reference measure $\GW_\g$ with V-coefficients given in \eqref{alphalimGW+}. As explained in Remark \ref{rem:counterexample}, our aim is to show that the sum rule in Theorem \ref{Unitabstractnewsumrule}, written as 
\begin{align*}
\mathrm{LHS}(\mu) = \mathrm{RHS}(\mu), 
\end{align*}
does not hold for all (symmetric) measures $\mu\in \mathcal S_1(I)$.

For $\g>1$, $\GW_\g$ is given by
\begin{equation}\label{GWMeq}
\GW_\g (\ddd z) = \frac{ \g }{\pi}\sin(\theta/2)\!\  \sqrt{\sin^2(\theta_\g/2)-\cos^2(\theta/2)}\!\ 1_{[\pi-\theta_\g, \pi+\theta_\g]}\!\ \ddd\theta\,, 
\end{equation}
where $\theta_{g}  \in (0, \pi)$ is the solution of
\begin{equation*}\label{eqthetag1}
\sin^2 (\theta_\g /2) =  \g^{-1}\,.
\end{equation*}
For $\gamma\in [0,1)$, we consider the two measures $\GE_{- \gamma}$ and $\GE_{\gamma}$ with constant V-coefficients 
\begin{equation}
\alpha_k (\GE_{-\gamma})= -\gamma, \qquad \alpha_k (\GE_{\gamma})= \gamma 
\end{equation}
for $k\geq 0$ and as given in \cite{simonopuc1}, page 87.
The measure $\GE_{-\gamma}$ satisfies     
\begin{align} \label{eq:geronimusmeasure}
\GE_{-\gamma}(\ddd z) =\frac{1}{1-\gamma}\frac{\sqrt{\cos^2(\theta_\gamma /2) - \cos^2(\theta/2)}}{2\pi \sin(\theta/2)}1_{[\theta_\gamma, 2\pi-\theta_\gamma]} \,\ddd \theta,
\end{align}
where $\theta_\gamma =2 \arcsin  \gamma$. Notice that we have parametrized the Hua–Pickrell distribution in a convenient way, and that our parametrization corresponds to the parameter $\gamma/(1-\gamma)$ in the more standard formulation. 
The measure $\GE_{\gamma}$ with reflected coefficients  
is given by 
\begin{align*}
\GE_\gamma(\ddd z) =\frac{1}{1+\gamma}\frac{\sqrt{\cos^2(\theta_\gamma /2) - \cos^2(\theta/2)}}{2\pi \sin(\theta/2)}1_{[\theta_\gamma, 2\pi-\theta_\gamma]} \ddd \theta + \frac{2\gamma}{1+\gamma}\delta_0\,.
\end{align*}
To fit the essential support of $\GE_{-\gamma}$ and $\GE_{-\gamma}$ with the one of $\GW_\g$, we need
\[\pi-\theta_\g = \theta_\gamma\,,\]
which is equivalent to
\begin{align*}
\gamma = \sqrt{1-|\g|^{-1}}=-\mathfrak a \,.
\end{align*}
For this parameter, $\GE_{\pm\gamma} \in \mathcal S_1(I)$ and $\GE_{-\gamma}\in M_\V$. We have then
\begin{align*}
\frac{\ddd \GW_\g}{\ddd \GE_{-\gamma}}(z)&= \frac{1}{2(1+\gamma)}\sin^2(\theta/2) , \\
\frac{\ddd \GW_\g}{\ddd \GE_\gamma}(z)&= \frac{1}{2(1-\gamma)}\sin^2(\theta/2) .
\end{align*}
For $\GE_{-\gamma}$, the left hand side of the sum rule is 
\begin{align*}
\mathrm{LHS}(\GE_{-\gamma}) & = \mathcal K\left(\GW_\g\mid\GE_{-\gamma}\right)
 =  - \log (2 (1+\gamma)) + \int \log \sin^2(\theta/2) \GW_\g(\ddd z) .
\end{align*}
and for $\GE_{\gamma}$, we have
\begin{align*}
\mathrm{LHS}(\GE_{\gamma}) & = \mathcal K\left(\GW_\g\mid\GE_{\gamma}\right) + \mathcal F(0) \\
& =  - \log (2 (1-\gamma)) + \int \log \sin^2(\theta/2) \GW_\g(\ddd z)+ \mathcal F(0) . 
\end{align*}
The contribution of the outlier for $\GE_\gamma$ is
\[\mathcal F(0) =\frac{2}{1-\gamma^2}\int_0^{\theta_\gamma} \sin(\theta/2)  \,   \sqrt{\cos^2(\theta/2)-\sin^2(\theta_\g/2)}\,\ddd \theta\,.\]
Changing $\cos(\theta/2)= \sqrt{1-\gamma}\cosh \varphi$, we obtain
\begin{align*}
\mathcal F(0)  = \frac{4}{1-\gamma^2}\int_0^{\varphi_\gamma}\sinh^2 (\varphi)\,  \ddd\varphi\ ,
\end{align*}
where $\cosh (\varphi_\gamma) = 1/\sqrt{1-\gamma}$, and then
\begin{align}
\label{F(0)}
\mathcal F(0)  =\frac{1}{2}\left(\sinh\varphi \cosh\varphi - \varphi\right)\big|_0^{\varphi_\g} = \frac{1}{2}\left(\frac{\sqrt\gamma}{1-\gamma}- \log \frac{1+\sqrt \gamma}{\sqrt{1-\gamma}}\right)
\end{align}
This implies that 
\begin{align*}
\mathrm{LHS}(\GE_{\gamma}) - \mathrm{LHS}(\GE_{-\gamma}) = \log \left(\frac{1+\gamma}{1-\gamma}\right) + \frac{1}{2}\left(\frac{\sqrt\gamma}{1-\gamma}- \log \frac{1+\sqrt \gamma}{\sqrt{1-\gamma}}\right). 
\end{align*}
As the potential  is  $\V(z) = \g \mathrm{Re}(z) = \frac{\g}{2}(z+z^{-1})$,
we obtain, 
\begin{align*}
\tr \, \mathcal G_L(\alpha) = \alpha_0 - \sum_{k=1}^{L-1} \alpha_k \bar\alpha_{k-1} . 
\end{align*}
Comparing with the decomposition in Proposition \ref{propclue}, we see that 
\begin{align*}
F_-(\alpha_{[0]}) = \g \mathrm{Re} (\alpha_0),\qquad G(\alpha_{[k-1,k]}) = -\g \mathrm{Re} ( \alpha_k \bar\alpha_{k-1} ),\qquad F_-(\alpha_{[L-1,L-1]}) = 0 .
\end{align*}
For a symmetric measure $\mu$ with real coefficients $\alpha$, this implies
\begin{align*}
\mathrm{RHS}(\mu) = \g(\alpha_0 - \alpha_0^\g) - \sum_{k=1}^\infty\left(\g (\alpha_k\alpha_{k-1} - \alpha_k^\g\alpha_{k-1}^\g)  \
+ \log \frac{1- \alpha_k^2}{1-|\alpha_k^\g|^2}\right) .
\end{align*}
We have successively
\begin{align*}
\notag
\mathrm{RHS}(\GE_{\gamma})&=  \g(\gamma- \alpha_0^\g) - \sum_{k=1}^\infty\left(\g (\gamma^2 + \alpha_k^\g\alpha_{k-1}^\g)  \
- \log \frac{1- \gamma^2}{1-|\alpha_k^\g|^2}\right) , \\
\mathrm{RHS}(\GE_{-\gamma})&=  -\g(\gamma+ \alpha_0^\g) - \sum_{k=1}^\infty\left(\g (\gamma^2 - \alpha_k^\g\alpha_{k-1}^\g)  \
+ \log \frac{1- \gamma^2}{1-|\alpha_k^\g|^2}\right) , 
\end{align*}
such that 
\begin{align*}
\mathrm{RHS}(\GE_{\gamma}) - \mathrm{RHS}(\GE_{-\gamma}) = 2\g \gamma .
\end{align*}
By Corollary \ref{Unitnewsumrulesymmetric}, we know that the sum rule holds for $\GE_{-\gamma}$, so that 
\begin{align*}
\mathrm{LHS}(\GE_{\gamma}) \neq \mathrm{RHS}(\GE_{\gamma}) . 
\end{align*} }

{\fab \subsection*{Acknowledgement}
Support from the ANR-3IA Artificial and Natural Intelligence
Toulouse Institute is gratefully acknowledged.}

\bibliographystyle{alpha}

\bibliography{BIBLIOGRAPHIE/books,BIBLIOGRAPHIE/articles,BIBLIOGRAPHIE/inbooks,BIBLIOGRAPHIE/thesis,BIBLIOGRAPHIE/proceedings,BIBLIOGRAPHIE/rapports}

\begin{thebibliography}{BSZ18b}

\bibitem[BDJ99]{baikdeiJohan1999}
J.~Baik, P.~Deift, and K.~Johansson.
\newblock On the distribution of the length of the longest increasing
  subsequence of random permutations.
\newblock {\em J. Amer. Math. Soc.}, 12(4):1119--1178, 1999.

\bibitem[BSZ18a]{breuersizei2017}
J.~Breuer, B.~Simon, and O.~Zeitouni.
\newblock Large deviations and sum rules for spectral theory - {A} pedagogical
  approach.
\newblock {\em J. Spectr. Theory}, 8(4):1551--1581, 2018.

\bibitem[BSZ18b]{breuersizei2018}
J.~Breuer, B.~Simon, and O.~Zeitouni.
\newblock Large deviations and the {L}ukic conjecture.
\newblock {\em Duke Math. J.}, 167(15):2857--2902, 2018.

\bibitem[CMV03]{canteromoral2003}
M.~J. Cantero, L.~Moral, and L.~Vel{\'a}zquez.
\newblock Five-diagonal matrices and zeros of orthogonal polynomials on the
  unit circle.
\newblock {\em Linear Algebra Appl.}, 362:29--56, 2003.

\bibitem[Du23]{du2023}
Z.~Du.
\newblock Sum rules and {S}imon spectral {G}em problem on higher order
  {S}zeg{\H o} theorems.
\newblock \url{https://doi.org/10.48550/arXiv.2312.01323}, 2023.

\bibitem[DZ98]{demboz98}
A.~Dembo and O.~Zeitouni.
\newblock {\em Large Deviations Techniques and Applications}.
\newblock Springer, 1998.

\bibitem[GNR16]{gamboanag2016}
F.~Gamboa, J.~Nagel, and A.~Rouault.
\newblock Sum rules via large deviations.
\newblock {\em J. Funct. Anal.}, 270(2):509 -- 559, 2016.

\bibitem[GNR17]{gamboanag2017}
F.~Gamboa, J.~Nagel, and A.~Rouault.
\newblock Sum rules and large deviations for spectral measures on the unit
  circle.
\newblock {\em Random Matrices Theory Appl.}, 6(1), 2017.

\bibitem[GNR21]{gamboanag2021}
F.~Gamboa, J.~Nagel, and A.~Rouault.
\newblock Sum rules via large deviations: extension to polynomial potentials
  and the multi-cut regime.
\newblock {\em J. Funct. Anal.}, 270(2):509 -- 559, 2021.

\bibitem[GNR25]{gamboanag2023}
F.~Gamboa, J.~Nagel, and A.~Rouault.
\newblock On some gateways between sum rules.
\newblock \url{https://doi.org/10.48550/arXiv.2301.09473 }, to appear in
  Annales de la Faculté des Sciences de Toulouse, 2025.

\bibitem[GR10]{gamboarou2010}
F.~Gamboa and A.~Rouault.
\newblock {Canonical moments and random spectral measures}.
\newblock {\em J. Theoret. Probab.}, 23:1015--1038, 2010.
\newblock Erratum in the same journal (2015) doi 10.1007/s10959-015-0653-5.

\bibitem[GW80]{grosswi1980}
D.J. Gross and E.~Witten.
\newblock Possible third-order phase transition in the large {N} lattice gauge
  theory.
\newblock {\em Phys. Rev. D}, 21(2):446--453, 1980.

\bibitem[GZ07]{golinskiizlat2007}
L.~Golinskii and A.~Zlatos.
\newblock {Coefficients of orthogonal polynomials on the unit circle and
  higher-order Szeg{\H o} theorems}.
\newblock {\em Constr. Approx.}, 26(3):361--382, 2007.

\bibitem[HP90]{houart1990}
L.~Houart and M.~Picco.
\newblock Non-perturbative interpretation of the phase structure of unitary
  matrix models.
\newblock {\em Phys. Lett. B}, 252(3):395--400, 1990.

\bibitem[HP00]{HiaiP}
F.~Hiai and D.~Petz.
\newblock {\em The Semicircle Law, Free Random Variables and Entropy},
  volume~77 of {\em Mathematical Surveys and Monographs}.
\newblock Amer. Math. Soc., Providence, 2000.

\bibitem[KN04]{killipnen2004}
R.~Killip and I.~Nenciu.
\newblock Matrix models for circular ensembles.
\newblock {\em Int. Math. Res. Not.}, (50):2665--2701, 2004.

\bibitem[Luk16]{lukic2016higher}
Milivoje Lukic.
\newblock On higher-order szeg{\H{o}} theorems with a single critical point of
  arbitrary order.
\newblock {\em Constructive Approximation}, 44:283--296, 2016.

\bibitem[Man90]{mandal1990}
G.~Mandal.
\newblock Phase structure of unitary matrix models.
\newblock {\em Modern Phy. Lett. A}, 5(14):1147--1158, 1990.

\bibitem[Oot22]{oota2022}
T.~Oota.
\newblock Perturbation of multi-critical unitary matrix models, double scaling
  limits, and {A}rgyres-{D}ouglas theories.
\newblock {\em Nuclear Phys. B}, 976:115718, 2022.

\bibitem[Pri05]{pritsker2005}
I.E. Pritsker.
\newblock {Weighted energy problem on the unit circle}.
\newblock {\em Constr. Approx.}, 23(1):103--120, 2005.

\bibitem[PS90]{periwal1990}
V.~Periwal and D.~Shevitz.
\newblock Unitary-matrix models as exactly solvable string theories.
\newblock {\em Phys. Rev. Lett.}, 64(12):1326, 1990.

\bibitem[PS11]{pastur_shcherbina}
L.~Pastur and M.~Shcherbina.
\newblock {\em Eigenvalue distribution of large random matrices}, volume 171.
\newblock American Mathematical Society Providence, RI, 2011.

\bibitem[Sim05a]{simonopuc1}
B.~Simon.
\newblock {\em Orthogonal polynomials on the unit circle. {P}art 1}, volume~54
  of {\em American Mathematical Society Colloquium Publications}.
\newblock American Mathematical Society, Providence, RI, 2005.
\newblock Classical theory.

\bibitem[Sim05b]{simonopuc2}
B.~Simon.
\newblock {\em Orthogonal polynomials on the unit circle. {P}art 2}, volume~54
  of {\em American Mathematical Society Colloquium Publications}.
\newblock American Mathematical Society, Providence, RI, 2005.
\newblock Spectral theory.

\bibitem[Sim11]{simonszego}
B.~Simon.
\newblock {\em \mbox{Szeg{\H o}'s} theorem and its descendants}.
\newblock M. B. Porter Lectures. Princeton University Press, Princeton, NJ,
  2011.

\bibitem[ST97]{saff1997logarithmic}
E.B. Saff and V.~Totik.
\newblock {\em Logarithmic potentials with external fields}, volume 316.
\newblock Springer, 1997.

\bibitem[SZ05]{simon2005higher}
Barry Simon and Andrej Zlato{\v{s}}.
\newblock Higher-order szeg{\H{o}} theorems with two singular points.
\newblock {\em Journal of Approximation Theory}, 134(1):114--129, 2005.

\bibitem[Wad12]{wadia2012}
S.~Wadia.
\newblock A study of {U(N)} lattice gauge theory in 2-dimensions.
\newblock \url{https://doi.org/10.48550/arXiv.1212.2906}, 2012.

\bibitem[Yan18]{yan2018}
J.~Yan.
\newblock An algebra model for the higher-order sum rules.
\newblock {\em Constr. Approx.}, 48(3):453--471, 2018.

\bibitem[Zhe98]{Zhedanov1998}
A.~Zhedanov.
\newblock On some classes of polynomials orthogonal on arcs of the unit circle
  connected with symmetric orthogonal polynomials on an interval.
\newblock {\em J. Approx. Theory}, 94(1):73--106, 1998.

\end{thebibliography}
\end{document}